  \numberwithin{equation}{section} 
  \date{}
\theoremstyle{plain} 
    \newtheorem{theorem}{Theorem}
    \newtheorem{lemma}[theorem]{Lemma}
    \newtheorem{proposition}[theorem]{Proposition}
    \newtheorem{claim}[theorem]{Claim}
\theoremstyle{definition} 
    \newtheorem{definition}{Definition}
    \newtheorem{remark}[definition]{Remark}
\renewcommand\appendix{\par
\setcounter{section}{0}%
\setcounter{subsection}{0}%
\setcounter{table}{0}
\setcounter{figure}{0}
\gdef\thetable{\Alph{table}}
\gdef\thefigure{\Alph{figure}}
\section*{Appendix}
\gdef\thesection{\Alph{section}}
\setcounter{section}{0}}
\DeclareMathOperator{\T}{\textbf{T}}
\DeclareMathOperator{\R}{\mathbb{R}}
\DeclareMathOperator{\Z}{\mathbb{Z}}
\DeclareMathOperator{\N}{\mathbb{N}}
\DeclareMathOperator{\De}{d}
\DeclareMathOperator{\one}{\mathbbm{1}} 
\newcommand{\E}{\mathsf{E}}
\newcommand{\var}{\mathsf{Var}}
\newcommand{\prob}{\mathsf{P}}
\renewcommand{\T}{\mathbb{T}}
\DeclareMathOperator{\e}{e}
\renewcommand{\O}[1]{\mathrm{O}\left(#1\right)} 
\renewcommand{\o}[1]{\mathrm{o}\left(#1\right)} 
\newcommand{\la}{\left\langle}
\newcommand{\ra}{\right\rangle}
\newcommand{\eps}{\epsilon}
\newcommand{\cR}{\mathcal R}
\newcommand{\eq}[1]{\begin{equation#1}}
\newcommand{\eeq}[1]{\end{equation#1}}
\renewcommand{\i}{\iota}
\begin{document}
\title[Scaling limit of odometer]{\bf \textsc{Scaling limit of the odometer in divisible sandpiles}}
\author{Alessandra Cipriani,   Rajat Subhra Hazra   and   Wioletta M. Ruszel}
\address{University of Bath}
\email{A.Cipriani@bath.ac.uk}

\address{Indian Statistical Institute, Kolkata.}
\email{rajatmaths@gmail.com}
\address{TU Delft}
\email{W.M.Ruszel@tudelft.nl}

\begin{abstract}
In a recent work \cite{LMPU} prove that the odometer function of a divisible sandpile model on a finite graph can be expressed as a shifted discrete bilaplacian Gaussian field. For the discrete torus, they suggest the possibility that the scaling limit of the odometer may be related to the continuum bilaplacian field. In this work we show that in any dimension the rescaled odometer converges to the continuum bilaplacian field on the unit torus.
\end{abstract}
\keywords{Divisible sandpile, odometer, membrane model, Gaussian field, Green's function, Abstract Wiener space}
\subjclass[2000]{31B30, 60J45, 60G15, 82C20}

\maketitle
\section{Introduction}
The concept of self-organized criticality was introduced in \cite{BTW} as a lattice model with a fairly elementary dynamics. Despite its simplicity, this model exhibits a very complex structure: the dynamics drives the system towards a stationary state which shares several properties of equilibrium systems at the critical point, e.g. power law decay of cluster sizes and of correlations of the height-variables. The model was generalised by \cite{Dha90} in the so-called Abelian sandpile model (ASP). Since then, the study of self-criticality has become popular in many fields of natural sciences, and we refer the reader to \cite{JaraiCornell} and \cite{RedigNotes} for an overview on the subject. In particular, several modifications of the ASP were introduced such as non-Abelian models, ASP on different geometries, and continuum versions like the divisible sandpile treated in \cite{LevPer,LePe10}. We are interested in the latter one which is defined as follows. By a graph $G = (V, E)$ we indicate a connected, locally finite and undirected graph with vertex set $V$ and edge set $E$. By $\mathrm{deg}(x)$ we denote the number of neighbours of $x\in V$ in $E$ and we write ``$y\sim_V x$'' when $(x,\,y)\in E$. A divisible sandpile configuration on G is a function $s : V \to\R$, where $s(x)$ indicates a mass of particles at site $x$. Note that here, unlike the ASP, $s(x)$ is a real-valued (possibly negative) number. If a vertex $x \in V$ satisfies $s(x)> 1$, it topples by keeping mass $1$ for itself and distributing the excess $s(x) -1$ uniformly among its neighbours. At each discrete time step, all unstable vertices topple simultaneously.

Given $(\sigma(x))_{x\in V}$ i.i.d.~standard Gaussians, we construct the divisible sandpile with weights $(\sigma(x))_{x\in V}$ by defining its initial configuration as 
\eq{}\label{eq:density}s(x)=1+\sigma(x)-\frac1{|V|}\sum_{y\in V} \sigma(y).\eeq{}
As in many models of statistical mechanics, one is interested in defining a notion of criticality here too. 

 Let $e^{(n)}(x)$ denote the total mass distributed by $x$ before time $n$ to any of its neighbours. If $e^{(n)}(x)\uparrow e_{V}$ where $e_{V}:\, V\to [0,\,+\infty]$, then $e_{V}$ is called the {\em odometer} of $s$. We have the following dichotomy: either $e_{V}<+\infty$ for all $x\in V$ (stabilization), or $e_{V}=+\infty$ for all $x\in V$ (explosion). It was shown in \cite{LMPU} that if $s(x)$ is assumed to be i.i.d.\ on an infinite graph which is vertex transitive, and if $\E[s(x)]>1$,  $s$ does not stabilize, while stabilization occurs for $\E[s(x)]<1$. In the critical case ($\E[s(x)]=1$) the situation is graph-dependent. For an infinite vertex transitive graph, with $\E[s(x)]=1$ and $0<\var(s(x))<+\infty$ then $s$ almost surely does not stabilize.

 For a finite connected graph, one can give quantitive estimates and representations for $e_V$. It is shown in \citet[Proposition 1.3]{LMPU} that the odometer corresponding to the density \eqref{eq:density} on a finite graph $V$ has distribution
\[(e_V(x))_{x\in V}\overset{d}=\left(\eta(x) -\min_{z\in V}\eta(z)\right)_{x\in V}\]
where $\eta$ is a ``bilaplacian" centered Gaussian field with covariance
$$\E[\eta(x)\eta(y)] =\frac{1}{\mathrm{deg}(x)\mathrm{deg}(y)}\sum_{w\in V}g(x, w)g(w, y)$$
setting
\eq{}\label{eq:g}g(x,y)= \frac1{|V|}\sum_{z\in V} g^z(x,y)\eeq{}
and $g^z(x, y)=\E\left[\sum_{m=0}^{\tau_z-1}\one_{\{S_m=y\}}\right]$ for $S=(S_m)_{m\ge 0}$ a simple random walk on $V$ starting at $x$ and $\tau_z := \inf\{m\ge 0 : S_m = z\}$. The field is called ``bilaplacian'' since a straightforward computation shows that
\[\Delta^2_g\left(\frac{1}{\mathrm{deg}(x)\mathrm{deg}(y)}\sum_{w\in V}g(x, w)g(w, y)\right)=\delta_x(y)-\frac{1}{|V|}
\]
where $\Delta_g$ denotes the graph Laplacian 
\[
\Delta_g f(x):=\sum_{y\sim_V x}f(y)-f(x),\quad f:\,V\to\R.
\]
Hence the covariance is related to the Green's function of the discrete bilaplacian (or biharmonic) operator. 

The interplay between the odometer of the sandpile and the bilaplacian becomes more evident in the observation made by Levine et al. on the odometer in $V:=\Z_n^d$, the discrete torus of side length $n>0$ in dimension $d$. They write (after the statement of Proposition 1.3):
\begin{quotation}
``We believe that if $\sigma$ is identically distributed with zero mean and finite variance, then the odometer, after a suitable shift and rescaling, converges weakly as $n\to+ \infty$ to the bilaplacian Gaussian field on $\R^d$".
\end{quotation}
Note that, although they work with Gaussian weights in the proof of Proposition~1.3, their comment comprises also the case when $\sigma$ has a more general distribution. Inspired by the above remark, we determine the scaling limit of the odometer in $d\ge 1$ for general i.i.d. weights: we show that indeed it equals $\Xi$, the continuum bilaplacian, but on the unit torus $\T^d$ (see Theorems~\ref{thm:main} and \ref{thm:3}). A heuristic for the toric limit is that the laplacian we consider is on $\Z_n^d$, which can be seen as dilation of the discrete torus $\T^d\cap(n^{-1}\Z)^d$. We highlight that $\Xi$ is not a random variable, but a {\em random distribution} living in an appropriate Sobolev space on $\T^d$. There are several ways in which one can represent such a field: a convenient one is to let $\Xi$ be a collection of centered Gaussian random variables $\left\{\la \Xi,\,u \ra:\,u\in H^{-1}(\T^d)\right\}$ with variance $\E\left[\la \Xi,\,u \ra^2\right]=\|u\|^2_{{-1}}$, where 
\[
\|u\|^2_{{-1}}:=\left(u,\Delta^{-2}u\right)_{L^2(\T^d)}
\]
and $\Delta^2$ now is the continuum bilaplacian operator.
We will give the analytical background to this definition in Subsection~\ref{subsec:review}. As a by-product of our proof, we are able to determine the kernel of the continuum bilaplacian on the torus which, to the best of the authors' knowledge, is not explicitly stated in the literature.
\paragraph{\it Related work.} Scaling limits for sandpiles have already been investigated: in the ASP literature limits for stable configurations have been studied, for example, in \cite{levine2012} and \cite{pegden:smart}. Their works are concerned with the partial differential equation that characterizes the scaling limit of the ASP in $\Z^2$. They also provide an interesting explanation of the fractal structure which arises when a large number of chips are placed at the origin and allowed to topple. The properties of the odometer play an important role in their analysis. In the literature of divisible sandpiles models, the scaling limit of the odometer was determined for an $\alpha$-stable divisible sandpile in \cite{frometa:jara}, who deal with a divisible sandpile for which mass is distributed not only to nearest-neighbor sites, but also to ``far away'' ones. Their limit is related to an obstacle problem for the truncated fractional Laplacian. In the subsequent work \cite{CHRheavy}, the authors of the present paper extend the result to the case in which the assumption on the finite variance of the $\sigma$'s is relaxed, and obtain an alpha-stable generalised field in the scaling limit.

The discrete bilaplacian (also called {\em membrane}) model was introduced in \cite{Sakagawa} and \cite{Kurt_d5,Kurt_d4} for the box of $\Z^d$ with zero boundary conditions. In $d\ge 4$ \cite{SunWu} and \cite{SunWu_d4} construct a discrete model for the bilaplacian field by assigning random signs to each component of the uniform spanning forest of a graph and study its scaling limit. As far as the authors know, \cite{LMPU} is the first paper in which the discrete bilaplacian model has been considered with periodic boundary conditions.

\subsection{Main results}\label{sec:main_res}
\paragraph{\it Notation.}We start with some preliminary notations which are needed throughout the paper. 
Let $\T^d$ be the $d$-dimensional torus, alternatively viewed as $\frac{\R^d}{\Z^d}$ or as $[-\frac12,\,\frac12)^d\subset\R^d$. $\Z_n^d:=[-\frac{n}{2},\,\frac{n}{2}]^d\cap \Z^d$ is the discrete torus of side-length $n\in \N$, and $\T_n^d:=[-\frac12,\,\frac12]^d\cap (n^{-1}\Z)^d$ is the discretization of $\T^d$.  Moreover let $B(z,\,\rho)$ a ball centered at $z$ of radius $\rho>0$ in the $\ell^\infty$-metric. We will use throughout the notation $z\cdot w$ for the Euclidean scalar product between $z,\,w\in \R^d$. With $\|\,\cdot\,\|_\infty$ we mean the $\ell^\infty$-norm, and with $\|\cdot\|$ the Euclidean norm. We will let $C,\,c$ be positive constants which may change from line to line within the same equation. We define the Fourier transform of a function $u\in L^1(\T^d)$ as $\widehat u(y):=\int_{\T^d}u(z)\exp\left(-2\pi\i y\cdot z\right)\De z$ for $y\in \Z^d$. We will use the symbol $\,\widehat \cdot\,$ to denote also Fourier transforms on $\Z_n^d$ and $\R^d$. We will say that a function $f(n)=\o{1}$ if $\lim_{n\to+\infty}f(n)=0$.

We can now state our main theorem: we consider the piecewise interpolation of the odometer on small boxes of radius $\frac{1}{2n}$ and show convergence to the continuum bilaplacian field.
\begin{theorem}[Scaling limit of the odometer for Gaussian weights]\label{thm:main}
Let $d\ge 1$ and let $(\sigma(x))_{x\in \Z_n^d}$ be a collection of i.i.d. standard Gaussians. Let $e_n(\cdot):=e_{\Z_n^d}(\cdot)$ be the odometer on $\Z_n^d$ associated to these weights. The formal field 
\begin{equation}\label{eq:deffield}
\Xi_n(x):=4\pi^2\sum_{z\in \T^d_n}n^{\frac{d-4}{2}}e_n({nz})\one_{B\left(z,\,\frac{1}{2n}\right)}(x),\quad x\in \T^d
\end{equation}
converges in law as $n\to+\infty$ to the bilaplacian field $\Xi$ on $\T^d$. The convergence holds in the Sobolev space $\mathcal H_{-\eps}(\T^d)$ with the topology induced by the norm $\|\cdot\|_{\mathcal H_{-\eps}(\T^d)}$ for any $\eps>\max\left\{1+\frac{d}{4},\,\frac{d}{2}\right\}$ (see Section~\ref{subsec:review} for the analytic specifications).
\end{theorem}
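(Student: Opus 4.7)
\emph{Proof proposal.} The plan is to invoke Proposition~1.3 to rewrite $e_n$ as $\eta_n-\min_z\eta_n$, push everything into Fourier on $\Z_n^d$, and combine convergence of covariances with tightness in $\mathcal H_{-\eps}(\T^d)$. Since the continuum bilaplacian $\Xi$ is only defined on mean-zero test functions (because $\Delta^{-2}$ lives on the orthogonal complement of constants), pairing $\Xi_n$ against such $u$ kills the shift $\min\eta_n$ from the outset; the contribution of the zero Fourier mode is handled by a separate and easier estimate. I would therefore work exclusively with the centred Gaussian $\eta_n$, whose translation-invariant covariance $K_n$ has Fourier symbol $1/\lambda_w^2$ on $\Z_n^d\setminus\{0\}$, where $\lambda_w=4\sum_{i=1}^d\sin^2(\pi w_i/n)$ are the eigenvalues of $-\Delta_g$.

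For convergence of finite-dimensional distributions, substitute \eqref{eq:deffield} into $\la\Xi_n,u\ra$ for mean-zero smooth $u$ and apply Parseval to get
\[
\E\bigl[\la\Xi_n,u\ra^2\bigr] \;=\; \frac{(4\pi^2)^2}{n^4}\sum_{w\in\Z_n^d\setminus\{0\}}\frac{|a_n(w)|^2}{\lambda_w^2},
\]
where $a_n(w):=\sum_{z\in\T_n^d}e^{2\pi\i w\cdot z}\int_{B(z,1/(2n))}u(y)\De y$ is the Fourier coefficient of the box-averaged $u$. A Riemann-sum estimate shows $a_n(w)\to\overline{\hat u(w)}$ and $n^2/\lambda_w\to(2\pi\|w\|)^{-2}$ pointwise for each fixed $w$; the summand is dominated by $C|\hat u(w)|^2/\|w\|^4$, summable because $u$ is smooth, so dominated convergence identifies the limit as $\|u\|^2_{-1}$. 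Gaussianity upgrades variance convergence to convergence in law, Cramér--Wold extends it to joint distributions, and a density argument over $\mathcal H_\eps(\T^d)$ removes the smoothness restriction on $u$.

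Tightness in $\mathcal H_{-\eps}(\T^d)$ is the technical core. I would fix $\eps'\in(\max\{1+d/4,d/2\},\eps)$ and show $\sup_n\E[\|\Xi_n\|^2_{\mathcal H_{-\eps'}}]<\infty$; combined with the compact embedding $\mathcal H_{-\eps'}\hookrightarrow\mathcal H_{-\eps}$ this gives the required tightness. Expanding the norm in the continuum basis $\{e^{2\pi\i v\cdot x}\}_{v\in\Z^d}$, the piecewise-constant structure of $\Xi_n$ factorises $\widehat{\Xi_n}(v)$ as $\hat\eta_n(v\bmod n)$ times a sinc kernel $\prod_{i=1}^d\sin(\pi v_i/n)/(\pi v_i)$ arising from integrating $e^{-2\pi\i v\cdot x}$ over $B(z,1/(2n))$. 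Combining $\E|\hat\eta_n(w)|^2\lesssim n^d/\lambda_w^2$ with the discrete eigenvalue bound $\lambda_w\ge c\|w\|^2/n^2$ on $\Z_n^d\setminus\{0\}$, and splitting according to whether $\|v\|_\infty\le n/2$ or $\|v\|_\infty>n/2$, one gets term-wise bounds that are summable against $(1+\|v\|^2)^{-\eps'}$ uniformly in $n$: the bilaplacian singularity $\|v\|^{-4}$ governs the low-frequency regime and produces the $\eps'>1+d/4$ threshold, while the aliased high-frequency tails, controlled only by the sinc kernel, force the Sobolev-embedding condition $\eps'>d/2$. The main difficulty is precisely reconciling these two regimes and extracting the sharp exponent; everything else, including Gaussianity, Parseval, and Cramér--Wold, is standard once the relevant Fourier estimates are in place.
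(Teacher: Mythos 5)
Your overall architecture matches the paper's: kill the $\min$ (and the zero Fourier mode) by pairing against mean-zero test functions, prove $\E\left[\la\Xi_n,u\ra^2\right]\to\|u\|_{-1}^2$ and let Gaussianity plus Cram\'er--Wold do the rest, and get tightness from a uniform bound on $\E\left[\|\Xi_n\|^2_{\mathcal H_{-\eps'}}\right]$ together with the compact embedding (Rellich). Where you genuinely diverge is the computation of the variance limit. The paper replaces the ball average $n^d\int_{B(z,\frac1{2n})}u$ by $u(z)$, controls the remainder separately, and is then forced to split into $d\le 3$ (direct dominated convergence, since $\sum_w\|w\|^{-4}<\infty$) and $d\ge4$ (mollification by $\phi_\kappa$, because the Riemann sums $\widehat{u_n}(w)$ are only bounded, not summably decaying in $w$ uniformly in $n$). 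You keep the box average inside the Fourier coefficient $a_n(w)$ and run a single dominated-convergence argument in all dimensions, exploiting the decay of the discrete Fourier coefficients of the box-averaged $u$. That is a legitimate, arguably cleaner route --- it is essentially the mechanism of the paper's Lemma~\ref{lemma:espresso}, which the authors only deploy later for the general-weight theorem. Likewise your sinc-kernel factorization of $\widehat{\Xi_n}(\nu)$ for tightness replaces the paper's second mollification (Claim~\ref{claim:song}).

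There is, however, a gap at exactly the step that carries the weight. The dominating function you propose, $C|\widehat u(w)|^2/\|w\|^4$, does not dominate $|a_n(w)|^2/\|w\|^4$ pointwise uniformly in $n$: since $a_n(w)$ is a discrete Fourier coefficient on $\T_n^d$, it equals $\overline{\widehat u(w)}$ (up to the sinc factor) plus aliasing contributions from all frequencies congruent to $-w$ modulo $n$, so $a_n(w)$ need not be controlled by $|\widehat u(w)|$ at a $w$ where the latter vanishes or is small. What your argument actually needs is a bound of the form $|a_n(w)|\le C_N(1+\|w\|)^{-N}+C_N n^{d-N}$, extracted from the smoothness of $u$ via the derivative rule for Fourier coefficients applied to $T_n(z)=\int_{B(z,\frac1{2n})}u$ --- precisely the content of Lemma~\ref{lemma:espresso}; with that in hand your dominated convergence goes through in every dimension. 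The same uniform control of the aliased regime $\|v\|_\infty>n/2$ is the unproved core of your tightness bound; you correctly flag it as the main difficulty but do not carry it out. One smaller inaccuracy: the threshold $\eps>1+\frac d4$ does not come from the low-frequency part of the tightness estimate (which only requires $4\eps'+4>d$); it is imposed so that the abstract Wiener space construction supports the limiting law $\Xi$ on $\mathcal H_{-\eps}$.
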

The reason to impose $\eps>\max\left\{1+\frac{d}{4},\,\frac{d}{2}\right\}$ is two-folded: on the one hand, it ensures the tightness of $\Xi_n$, on the other it allows us to define the law of $\Xi$ properly (see the construction of abstract Wiener space in Subsection~\ref{subsec:review}). Observe moreover that $\max\left\{1+\frac{d}{4},\,\frac{d}{2}\right\}$ has a transition at $d=4$, which is reminiscent of the phase transition of the bilaplacian model on $\Z^d$ (see for instance \cite{Kurt_d4}).

We can now show the next Theorem, which generalises the previous one to the case in which the weights have an arbitrary distribution with mean zero and finite variance. We keep the proof separate from the Gaussian one, as the latter will allow us to obtain precise results on the kernel of the bilaplacian, and has also a different flavor. Moreover, the more general proof relies on estimates we obtain in the Gaussian case. With a slight abuse of notation, we will define a field $\Xi_{n}$ as in Theorem~\ref{thm:main} also for weights which are not necessarily Gaussian (in the sequel, it will be clear from the context to which weights we are referring to).
\begin{theorem}[Scaling limit of the odometer for general weights]\label{thm:3}
Assume $(\sigma(x))_{x\in \Z_n^d}$ is a collection of i.i.d. variables with $\E\left[\sigma\right]=0$ and $\E\left[\sigma^2\right]=1$. Let $d\ge 1$ and $e_n(\cdot)$ be the corresponding odometer. If we define the formal field $\Xi_n$ as in \eqref{eq:deffield} for such weights, then it
converges in law as $n\to+\infty$ to the bilaplacian field $\Xi$ on $\T^d$. The convergence holds in the same fashion of Theorem~\ref{thm:main}.
\end{theorem}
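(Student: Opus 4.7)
The approach is to bootstrap from Theorem~\ref{thm:main} via an invariance principle. On $\Z_n^d$ every vertex topples at least once (since the density \eqref{eq:density} has mean one with only mean-zero fluctuations around it), so the odometer solves $\Delta_g e_n = 1 - s$ everywhere, subject to $e_n \ge 0$ and $\min_z e_n = 0$. Writing this via the Green's function \eqref{eq:g} yields the pathwise identity
\begin{equation*}
e_n(x) \;=\; \eta_n(x) \;-\; \min_{z \in \Z_n^d} \eta_n(z), \qquad \eta_n(x) \;:=\; \frac{1}{2d}\sum_{w \in \Z_n^d} g(x, w)\, \sigma(w),
\end{equation*}
valid for arbitrary weights, not only Gaussian ones; the point is that $\eta_n$ is explicitly linear in $\sigma$, while the Gaussian hypothesis was only used in Proposition~1.3 to identify the law of $\eta_n$.

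Since $\Delta^{-2}$ on $\T^d$ is only defined modulo constants, the norm $\|\cdot\|_{-1}$ is defined on mean-zero elements of $L^2(\T^d)$, so $\Xi$ acts on mean-zero test functions. Testing $\Xi_n$ against such a $u$ annihilates the shift $-\min_z \eta_n$, leaving
\begin{equation*}
\langle \Xi_n, u \rangle \;=\; \sum_{w \in \Z_n^d} \alpha_n(w; u)\, \sigma(w),
\end{equation*}
a sum of independent mean-zero random variables with deterministic coefficients built from $g(\cdot,w)$ and a Riemann sum of $u$. By Theorem~\ref{thm:main} applied to Gaussian weights, $\var(\langle \Xi_n, u \rangle) = \sum_w \alpha_n(w; u)^2 \to \|u\|_{-1}^2$ as $n \to +\infty$, which gives the first hypothesis of the Lindeberg--Feller CLT with the correct limiting variance. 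The second condition reduces to $\max_w \alpha_n(w; u)^2 \to 0$, which follows from pointwise bounds on $g$ that are either classical or extracted from the Fourier computations underpinning Theorem~\ref{thm:main}. The Cram\'er--Wold device upgrades one-dimensional marginals to convergence of all finite-dimensional distributions.

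For tightness in $\mathcal{H}_{-\eps}(\T^d)$ it suffices, by Markov's inequality, to control $\E\bigl[\|\Xi_n\|_{\mathcal{H}_{-\eps}}^2\bigr]$ uniformly in $n$. Expanding in the Fourier basis $\{\e^{2\pi \i \nu \cdot x}\}_{\nu \in \Z^d \setminus\{0\}}$ of mean-zero $L^2(\T^d)$ gives
\begin{equation*}
\E\bigl[\|\Xi_n\|_{\mathcal{H}_{-\eps}}^2\bigr] \;=\; \sum_{\nu \ne 0} (1 + \|\nu\|^2)^{-\eps}\, \E\bigl[|\widehat{\Xi_n}(\nu)|^2\bigr].
\end{equation*}
Each summand is a second-moment quantity depending on $\sigma$ only through $\var(\sigma) = 1$, hence coincides with its Gaussian counterpart; the summability under $\eps > \max\{1 + d/4,\, d/2\}$ is therefore precisely the estimate already established for Theorem~\ref{thm:main}.

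The main technical obstacle is the Lindeberg step, i.e.\ showing that no single weight $\sigma(w)$ carries a non-vanishing fraction of the total variance. This reduces to a pointwise estimate on the discrete Green's function on $\Z_n^d$ against a Riemann approximation of $u$, and is most delicate in low dimensions $d \in \{1,2,3\}$ where $g(x,w)$ itself diverges with $n$. The saving is that the divergence is sub-dominant with respect to the $L^2$ mass of $g$, a fact quantified by the same Fourier estimates that compute the limit variance in the Gaussian case; thus the general-weights argument reuses, rather than rederives, the harmonic analysis of Theorem~\ref{thm:main}.
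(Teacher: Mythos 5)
Your proposal is correct in outline but takes a genuinely different route from the paper. You reduce, exactly as the paper does, to the linear functional $\la \Xi_n,\,u\ra=\sum_{w}\alpha_n(w;u)\sigma(w)$ (the paper calls the underlying field $\Xi_{w_n}$), and you observe that both the limiting variance and the tightness bound are second-moment quantities already computed in the Gaussian case; the paper's tightness proof is indeed a verbatim re-run for this reason. Where you diverge is in establishing asymptotic Gaussianity: you invoke the Lindeberg--Feller CLT directly under the hypothesis $\E[\sigma^2]=1$, whereas the paper first proves the result for \emph{bounded} weights by the method of moments (Proposition~\ref{prop:moments}, via an expansion over partitions in which only pair partitions survive) and then removes the boundedness assumption by truncating at a level $\cR$ and applying Theorem~\ref{thm:billy}. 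Your route, if completed, is shorter: it needs no truncation and no control of moments of order higher than two. The one step you assert rather than prove --- that no single weight carries a non-vanishing fraction of the variance --- is precisely where the paper's real technical work sits, and it is worth recording that your claim is true and provable with the paper's own tools: writing $\mathcal T_n(z)=\int_{B(z/n,\,\frac{1}{2n})}u(y)\De y$, Parseval together with \eqref{eq:20} gives
\begin{equation*}
\left|\alpha_n(w;u)\right|\le C\, n^{\frac{d-4}{2}}\sum_{z\in\Z_n^d\setminus\{0\}}\frac{\left|\widehat{\mathcal T_n}(z)\right|}{|\lambda_z|}\le C\, n^{\frac{d-4}{2}}\,n^{2}\sum_{z\in\Z_n^d\setminus\{0\}}\left|\widehat{\mathcal T_n}(z)\right|\le C\,n^{-\frac{d}{2}},
\end{equation*}
using the uniform eigenvalue bound \eqref{eq:mongoose} and the estimate $\sum_{z}|\widehat{\mathcal T_n}(z)|\le \mathcal M n^{-d}$, which is exactly the content of the paper's Lemma~\ref{lemma:espresso} (itself a nontrivial Fourier-decay argument, not a ``classical pointwise bound on $g$''). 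Since $\sum_w\alpha_n(w;u)^2\to\|u\|_{-1}^2>0$ by Propositions~\ref{lem:bhindi} and~\ref{lem:small}, the bound $\max_w|\alpha_n(w;u)|=\O{n^{-d/2}}$ yields uniform asymptotic negligibility and hence, for identically distributed summands with finite variance, the Lindeberg condition; note in particular that your worry about $d\le 3$ is unfounded, as the divergence of $g$ is absorbed uniformly in $d$ by this computation. So the two proofs rest on the same harmonic-analytic estimate but package it differently: yours buys economy (no truncation, no higher moments), while the paper's moment computation has the side benefit of exhibiting explicitly how non-pair partitions are suppressed.
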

We now give an explicit description of the covariance structure of $\Xi$. Our motivation is also a comparison with the whole-space bilaplacian field already treated in the literature. More precisely, for $d\ge 5$, \citet[Definition~3]{SunWu} define the bilaplacian field $\widetilde \Xi_d$ on $\R^d$ as the unique distribution on $\left(C^\infty_c(\R^d)\right)^\ast$ such that, for all $u\in C^\infty_c(\R^d)$, $\la\widetilde \Xi_d,\,u \ra$ is a centered Gaussian variable with variance
\[
\E\left[\la\widetilde \Xi_d,\,u \ra^2\right]=\iint_{\R^d\times\R^d}u(x)u(y)\|x-y\|^{4-d}\De x \De y.
\]
Since we obtain a limiting field on $\T^d$, we
think it is interesting to give a representation for the covariance kernel of the biharmonic operator in our setting. From now on, when we use the terminology ``zero average'' for a function $u$, we always mean $\int_{\T^d}u(x)\De x=0.$
\begin{theorem}[Kernel of the biharmonic operator in higher dimensions]\label{corol:kernel}
Let $d\ge 5$. Let furthermore $u \in C^\infty(\T^d)$ and with zero average. Then there exists $\mathcal G_d\in L^1(\R^d)$ such that
\begin{align}
\E\left[\la \Xi_,\,u \ra^2\right]&=\left(u,\,\Delta^{-2}u\right)_{L^2(\T^d)}\nonumber\\
&=\iint_{\T^d\times \T^d} u(z)u(z')\sum_{w\in \Z^d}\mathcal G_d(z-z'+w)\De z\De z'.\label{eq:kernel_exp}
\end{align}
$\mathcal G_d$ can be computed as follows: 
there exists $h_d\in C^\infty(\R^d)$ depending on $d$ such that
\eq{}\label{eq:gexp}
\mathcal G_d(\,\cdot\,)=
{\pi^{4-\frac{d}{2}}\Gamma\left(\frac{d-4}{2}\right)}\|\cdot\|^{4-d}+h_d(\,\cdot\,).
\eeq{}
\end{theorem}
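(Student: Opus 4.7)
The plan is to compute the quadratic form $(u,\Delta^{-2}u)_{L^2(\T^d)}$ by Fourier series on $\T^d$, then invoke Poisson summation to identify the kernel as a lattice sum of a function on $\R^d$ whose singular part is dictated by the Riesz identity for the whole-space bilaplacian. First I would use the spectral decomposition of $\Delta$ on $\T^d$ with eigenfunctions $e^{2\pi\i\nu\cdot z}$ and eigenvalues $-4\pi^2\|\nu\|^2$: for $u\in C^\infty(\T^d)$ of zero average, $\widehat u(0)=0$ and
\[
(u,\Delta^{-2}u)_{L^2(\T^d)}=\sum_{\nu\in\Z^d\setminus\{0\}}\frac{|\widehat u(\nu)|^2}{16\pi^4\|\nu\|^4}.
\]
Substituting $\widehat u(\nu)=\int u(z)\exp(-2\pi\i\nu\cdot z)\De z$ and interchanging sum and integral, this rewrites as $\iint u(z)u(z')K(z-z')\De z\De z'$ with periodic kernel $K(z)=\sum_{\nu\neq0}(16\pi^4\|\nu\|^4)^{-1}e^{2\pi\i\nu\cdot z}$, understood as a distribution on $\T^d$. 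The task reduces to realising $K$, modulo an additive constant which is harmless since $u$ has zero average, as a lattice sum $\sum_{w\in\Z^d}\mathcal G_d(z+w)$.

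The bridge is Poisson summation: for $\mathcal G_d\in L^1(\R^d)$ of adequate regularity, $\sum_{w\in\Z^d}\mathcal G_d(z+w)=\sum_{\nu\in\Z^d}\widehat{\mathcal G_d}(\nu)e^{2\pi\i\nu\cdot z}$, so I would pick $\mathcal G_d$ with $\widehat{\mathcal G_d}(\nu)=(16\pi^4\|\nu\|^4)^{-1}$ for $\nu\neq 0$. The candidate is dictated by the Riesz identity on $\R^d$: the tempered distribution $\|x\|^{4-d}$ has Fourier transform proportional to $\|\xi\|^{-4}$, with constant given, up to the paper's conventions, by $\pi^{4-d/2}\Gamma((d-4)/2)$. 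Since $\|x\|^{4-d}$ is locally $L^1$ for $d\ge 5$ but decays too slowly at infinity, I would define $\mathcal G_d$ as the product of $\pi^{4-d/2}\Gamma((d-4)/2)\|x\|^{4-d}$ with a smooth cutoff near $0$, plus a smooth correction $h_d\in C^\infty(\R^d)$ chosen both to enforce global integrability and to tune the Fourier transform at each $\nu\in\Z^d\setminus\{0\}$ to the exact required value. This produces the decomposition \eqref{eq:gexp}.

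For the smoothness assertion on $h_d$ I would argue that the $\|x\|^{4-d}$ singularity of $K$ at the origin is controlled entirely by the high-frequency tail of the Fourier series, whose leading asymptotics coincide with the Riesz kernel on $\R^d$. Subtracting this tail mode by mode leaves a series in $\nu$ that converges absolutely together with all its derivatives, producing a smooth residue on $\T^d$; lifting back to $\R^d$ and extending via the cutoff yields a $C^\infty$ representative for $h_d$.

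The main obstacle is the tension between $\mathcal G_d\in L^1(\R^d)$ and $\widehat{\mathcal G_d}(\nu)=(16\pi^4\|\nu\|^4)^{-1}$ on the full non-zero lattice. Since $\|x\|^{4-d}$ is not globally integrable, it cannot serve as $\mathcal G_d$ directly, yet its Fourier signature must be preserved at every non-zero integer point. Justifying Poisson summation here requires a regularisation argument (for instance multiplying by a Gaussian of variance $\eps^{-1}$ and letting $\eps\downarrow 0$, or working in the sense of tempered distributions), and one must verify that the smooth correction $h_d$ needed to achieve the $L^1$ condition does not alter the Fourier values at $\Z^d\setminus\{0\}$.
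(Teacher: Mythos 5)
Your high-level strategy --- diagonalise $\Delta^{-2}$ in Fourier series, identify the kernel as a lattice periodisation via Poisson summation, and read off the singular part from the Riesz identity $\widehat{\|\cdot\|^{-4}}=\pi^{4-\frac d2}\Gamma\left(\frac{d-4}{2}\right)\|\cdot\|^{4-d}$ --- is the same as the paper's. But there is a genuine gap exactly where you flag ``the main obstacle'': you never actually construct a $\mathcal G_d$ that is simultaneously in $L^1(\R^d)$, has Fourier transform equal to $\|\nu\|^{-4}$ at every $\nu\in\Z^d\setminus\{0\}$, and decomposes as in \eqref{eq:gexp}. Your proposal --- multiply the Riesz kernel by a cutoff near the origin in \emph{physical} space and then add a smooth $h_d$ ``tuned'' to restore the Fourier values at all non-zero lattice points --- does not close this circle: truncating in physical space smears the Fourier transform into a convolution $c\,\|\cdot\|^{-4}\ast\widehat\chi$, so the quantity you must correct is a nontrivial prescribed sequence at every non-zero lattice point, decaying only at a fixed polynomial rate; you give no construction of a smooth $L^1$ function realising these values, nor a proof that adding it preserves the form \eqref{eq:gexp}. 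The paper's device is to put the cutoff in \emph{frequency} space instead: take a smooth radial $\zeta$ with $\zeta\equiv 0$ on $\|x\|\le\frac14$ and $\zeta\equiv1$ on $\|x\|\ge\frac12$, set $G:=\zeta\,\|\cdot\|^{-4}$ and $\mathcal G_d:=\widehat G$. Since every non-zero lattice point has norm at least $1$, the values $\|\nu\|^{-4}$ are exact by construction; since $(\zeta-1)\|\cdot\|^{-4}$ is a compactly supported distribution, its transform $h_d$ is automatically $C^\infty$, giving \eqref{eq:gexp}; and since $G$ is smooth with integrable high-order derivatives, $\mathcal G_d$ decays faster than any polynomial, which together with the local integrability of $\|x\|^{4-d}$ for $d\ge5$ yields $\mathcal G_d\in L^1(\R^d)$. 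All three requirements drop out of one definition.

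Two further points. First, Poisson summation cannot be applied to $\mathcal G_d$ itself, because $\widehat{\mathcal G_d}(w)=\zeta(w)\|w\|^{-4}$ is not summable over $\Z^d$ when $d\ge 5$; your suggested Gaussian regularisation is the right instinct but is left unexecuted. The paper instead starts from the already-mollified discrete sum \eqref{eq:fish} inherited from the proof of Proposition~\ref{lem:bhindi} and applies Poisson summation to $f_\kappa=\mathcal G_d\ast\phi_\kappa$, for which both $f_\kappa$ and $\widehat{f_\kappa}$ decay like $(1+\|\cdot\|)^{-d-1}$; the limits $n\to+\infty$ and $\kappa\to0$ are then removed by dominated convergence. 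Second, a normalisation issue: the paper's $(-\Delta)$ on $\T^d$ acts on $\mathbf e_\nu$ with eigenvalue $\|\nu\|^{2}$, not $4\pi^2\|\nu\|^2$, so $\left(u,\Delta^{-2}u\right)_{L^2(\T^d)}=\sum_{\nu\in\Z^d\setminus\{0\}}|\widehat u(\nu)|^2\|\nu\|^{-4}$ carries no factor $16\pi^4$; with your convention the constant in \eqref{eq:gexp} would come out divided by $16\pi^4$ and would not match the statement.
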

\begin{remark}[Kernel of the biharmonic operator in lower dimensions]\label{rem:lower}
The convergence result of Theorem~\ref{thm:3} allows us to determine the kernel in $d\le 3$ too.  In fact, for such $d$ interchanging sum and integrals is possible, so that we can write
\begin{equation}\label{eq:kernel}
\left(u,\,\Delta^{-2}u\right)_{L^2(\T^d)}=\sum_{\nu\in \Z^d\setminus\{0\}}\frac{\left|\widehat u(\nu)\right|^2}{\|\nu\|^{4}}=\iint_{\T^d\times \T^d}u(z)u(z')\mathcal K(z-z')\De z\De z',
\end{equation}
where we can define the kernel of the bilaplacian to be
\[
\mathcal K(z-z'):=\sum_{\nu\in \Z^d\setminus\{0\}}\frac{\e^{2\pi\i (z-z')\cdot \nu}}{\|\nu\|^{4}},\quad z,\,z'\in \T^d.
\]
\end{remark}
\paragraph{\it Outline of the article.}The necessary theoretical background is given in Section~\ref{sec:prel}, together with an outline of the strategy of the proof of Theorem~\ref{thm:main}. Auxiliary results and estimates are provided in Section~\ref{sec:proof_prop}. The proof of Theorem~\ref{thm:main} lies  in Section~\ref{sec:proof_main}, and of Theorem~\ref{thm:3} in Section~\ref{sec:thm3}. Finally we conclude with the proof of Theorem~~\ref{corol:kernel} in Section~\ref{sec:corol}.
\paragraph{\it{Acknowledgments}}We would like to thank Xin Sun for pointing out to us the paper \cite{SunWu_d4}. We are grateful to Swagato K. Ray and Enrico Valdinoci for helpful discussions, and to an anonymous referee who helped in improving and clarifying the paper. The first author's research was partially supported by the Dutch stochastics cluster STAR (Stochastics -- Theoretical and Applied Research) and by the EPSRC grant EP/N004566/1. The second author's research was supported by Cumulative Professional Development Allowance from Ministry of Human Resource Development, Government of India and Department of Science and Technology, Inspire funds.
\section{Preliminaries}\label{sec:prel}
In this section we review the basics of the spectral theory of the Laplacian on the discrete torus from \cite{LMPU}. We also remind the fundamentals of abstract Wiener spaces which enable us to construct standard Gaussian random variables on a Sobolev space on $\T^d$. The presentation is inspired by \cite{Silvestri}. We also comment on the basic strategy of the proof of Theorem~\ref{thm:main} and make some important remarks on the test functions we use for our calculations. We refer for the Fourier analytic details used in this article to \cite{stein:weiss} and for a survey on random distributions to \cite{GVbook}.
\subsection{Fourier analysis on the torus}
We now recall a few facts about the eigenvalues of the Laplacian from \cite{LMPU} for completeness. Consider the Hilbert space $L^2(\Z_n^d)$ of complex valued functions on the discrete torus endowed with the inner product
$$\la f, g \ra = \frac1{n^d} \sum_{x\in \Z_n^d} f(x)\overline{g(x)}.$$
The Pontryagin dual group of $\Z_n^d$ is identified again with $\Z_n^d$. Let $\{ \psi_a: a\in \Z_n^d\}$ denote the characters of the group where $\psi_a(x)= \exp(2\pi \i x\cdot \frac{a}{n})$. 
The eigenvalues of the Laplacian $\Delta_g$ on discrete tori are given by $$\lambda_w= -4 \sum_{i=1}^d \sin^2\left(\frac{\pi w_i}{n}\right),\quad w\in\Z_n^d.$$
Recalling \eqref{eq:g}, we use the shortcut $g_x(y):= g(y,x)$.
Let $\widehat g_x$ denote the Fourier transform of $g_x$. It follows that
\begin{equation}\label{eq:defL}
\widehat g_x(0)= n^{-d}\sum_{y\in \Z_n^d} g_x(y) =:L 
\end{equation}
for all $x\in \Z_n^d$ (it can be seen in several ways, for example by translation invariance, that $L$ is independent of $x$). Finally, we recall \citet[Equation (20)]{LMPU}: for all $a\neq 0$,
\eq{}\label{eq:20}
\lambda_a \widehat{g_x}(a)=-2d n^{-d}\psi_{-a}(x).
\eeq{}
\subsection{Gaussian variables on homogeneous Sobolev spaces on the torus}\label{subsec:review}
Since our conjectured scaling limit is a random distribution, we think it is important to keep the article self-contained and give a brief overview of analytic definitions needed to construct the limit in an appropriate functional space. Our presentation is based on \citet[Section 2]{Sheff} and \citet[Sections~6.1, 6.2]{Silvestri}.

An {\em abstract Wiener space} (AWS) is a triple $(H, B, \mu)$, where:
\begin{enumerate}
\item $(H, (\cdot,\cdot)_H)$ is a Hilbert space,
\item $(B, \|\cdot\|_{B})$ is the Banach space completion of $H$ with respect to the measurable norm $\|\cdot\|_B$ on $H$, equipped with the Borel $\sigma$-algebra $\mathcal B$ induced by $\|\cdot\|_B$, and
\item $\mu$  is the unique Borel probability measure on $(B,\mathcal B)$ such that, if $B^*$ denotes the dual space of $B$, then $\mu\circ\phi^{-1}\sim \mathcal N(0, \|\widetilde \phi\|^2_{H}) $ for all $\phi\in B^*$, where $\widetilde \phi$ is the unique element of $H$ such that $\phi(h)=(\widetilde \phi, h)_H$ for all $h\in H$.
\end{enumerate}
We remark that, in order to construct a measurable norm $\|\cdot\|_B$ on $H$, it suffices to find a Hilbert- Schmidt operator $T$ on $H$, and set $\|\cdot\|_B :=\|T\cdot\|_H$.

Let us construct then an appropriate AWS. Choose $a\in \R$. Let us define the operator $(-\Delta)^a$ acting on $L^2(\T^d)$-functions $u$ with Fourier series $\sum_{\nu\in \Z^d}\widehat u(\nu)\mathbf e_{\nu}(\cdot)$ as follows ($(\mathbf e_\nu)_{\nu\in \Z^d}$ denotes the Fourier basis of $L^2(\T^d)$):
\[
(-\Delta)^a \left(\sum_{\nu\in \Z^d}\widehat u(\nu)\mathbf e_{\nu}\right)(\vartheta)=\sum_{\nu\in \Z^d\setminus\{0\}}\|\nu\|^{2a}\widehat u(\nu)\mathbf e_{\nu}(\vartheta).
\]
Let ``$\sim$'' be the equivalence relation on $C^\infty(\T^d)$ which identifies two functions differing by a constant and let $H^a(\T^d)$ be the Hilbert space completion of $C^\infty(\T^d)/{\sim}$ under the norm
\[
(f,\,g)_{a}:=\sum_{\nu\in \Z^d\setminus\{0\}}\|\nu\|^{4a}\widehat f(\nu)\widehat g(\nu).
\]
Define the Hilbert space 
\[
\mathcal H_{a}:=\left\{u\in L^2(\T^d):\,(-\Delta)^a u\in L^2(\T^d)\right\}/{\sim}.
\]
We equip $\mathcal H_{a}$ with the norm
\[
\|u\|_{\mathcal H_{a}(\T^d)}^2=\left((-\Delta)^a u,\,(-\Delta)^a u\right)_{L^2(\T^d)}.
\]
In fact, $(-\Delta)^{-a}$ provides a Hilbert space isomorphism between $\mathcal H_{a}$ and $H^a(\T^d)$, which when needed we identify. 
For 
\eq{}\label{eq:norm_meas}
b<a-\frac{d}{4}
\eeq{}
one shows that $(-\Delta)^{b-a}$ is a Hilbert-Schmidt operator on $H^a$ (cf.~also \citet[Proposition~5]{Silvestri}). In our case, we will be setting $a:=-1$. Therefore, by \eqref{eq:norm_meas},
for any $-\epsilon:=b<0$ which satisfies $\eps>1+\frac{d}{4}$, we have that $(H^{-1},\,\mathcal{H}_{-\epsilon},\,\mu_{-\epsilon})$ is an AWS. The measure $\mu_{-\epsilon}$ is the unique Gaussian law on $\mathcal H_{-\eps}$ whose characteristic functional is
\[
\Phi(u):=\exp\left(-\frac{\|u\|_{{-1}}^2}{2}\right).
\]
The field associated to $\Phi$ will be called $\Xi$
and is the limiting field claimed in Theorem~\ref{thm:main}.

There is a perhaps more explicit description of $\Xi$ which is based on {\em Gaussian Hilbert spaces} \cite[Chapter 1]{Jan97}. The construction is taken from \citet[Example~1.25]{Jan97}. Let $(\Omega,\,\mathcal A,\,P)$ be a probability space with $\mathcal A$ its Borel $\sigma$-algebra. Assume that on $\Omega$ one can define a sequence of i.i.d.\ standard Gaussians $({X}_m)_{m\in \N}$. Let further $(\mathbf{X}_m)_{m\in \N}$ be an orthonormal basis of $H^{-1}(\T^d)$. Then there is an isometric embedding $ \Xi:\,H^{-1}(\T^d)\hookrightarrow L^2(\Omega,\,P)$ such that $\la\Xi,\,\mathbf X_m\ra\stackrel{d}{=}X_m$ for all $m$. Indeed, by the properties of AWS, the mapping $(H^{-\eps})^\ast\ni \phi \mapsto \la \Xi,\,\phi\ra$ is an isometry of the dense subspace $(H^{-\eps})^\ast$ onto $S:=\left\{\la\Xi,\,u\ra:\,u\in   (H^{-\eps})^\ast\right\}$. The mapping can be extended by continuity to an isometry between $H^{-1}$ and the corresponding closure of $S$. Taking $\Omega:=\mathcal H_{-\eps}$ and $P:=\mu_{-\eps}$, this entails an alternative construction of $\Xi$: it is the unique Gaussian process indexed by $H^{-1}$ such that $\Xi\stackrel{d}{=}\left\{\la\Xi,\,u\ra:\,u\in   H^{-1}(\T^d)\right\}$ with $\la\Xi,\,u\ra\sim \mathcal N\left(0,\,\|u\|_{-1}^2\right)$ for any $u\in H^{-1}(\T^d).$
\subsection{Strategy of the proof of Theorem~\ref{thm:main}.} Firstly, we show that $\eta$ can be decomposed into the sum of two independent fields, namely 
\begin{proposition}\label{prop:decomposition}
There exist a centered Gaussian field $(\chi_x)_{x\in \Z_n^d}$ with covariance $\E[\chi_x\chi_y]=H(x,y)$ as in \eqref{eq:defH} and a centered normal random variable $Y$ with variance $(2d)^{-2} n^d L^2$ (where $L$ is as in~\eqref{eq:defL}), such that $Y$ is independent from $(\chi_x)_{x\in \Z_n^d}$ and 
$$(\eta(x))_{x\in \Z_n^d}\overset{d}=(Y+\chi_x)_{x\in\Z_n^d}.$$
In particular, $e_n(\cdot)$ admits the representation
 $$(e_n(x))_{x\in \Z_n^d} \overset{d}=\left(\chi_x- \min_{z\in \Z_n^d} \chi_z\right)_{x\in\Z_n^d}.$$
\end{proposition}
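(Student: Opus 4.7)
The plan is to diagonalize the covariance of $\eta$ in the Fourier basis $\{\psi_a : a \in \Z_n^d\}$ and isolate the contribution of the zero mode. Since $\Z_n^d$ is $2d$-regular, Parseval's identity applied to $\sum_w g_x(w)g_y(w)$ yields
$$\E[\eta(x)\eta(y)] = \frac{1}{(2d)^2}\sum_{w\in \Z_n^d} g(x,w)g(w,y) = \frac{n^d}{(2d)^2} \sum_{a\in \Z_n^d} \widehat{g_x}(a) \overline{\widehat{g_y}(a)}.$$
The $a=0$ term contributes $(2d)^{-2} n^d L^2$ by \eqref{eq:defL}, independently of $x,y$. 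For $a \neq 0$, \eqref{eq:20} combined with $\lambda_a = \lambda_{-a}$ and $\psi_{-a}(x)\psi_a(y)=\psi_a(y-x)$ shows that each mode contributes $n^{-d} \lambda_a^{-2} \psi_a(y-x)$. This delivers the clean splitting
$$\E[\eta(x)\eta(y)] = \frac{n^d L^2}{(2d)^2} + H(x,y), \qquad H(x,y) = \frac{1}{n^d} \sum_{a\neq 0} \frac{\psi_a(y-x)}{\lambda_a^2},$$
so that the covariance structure already exhibits the additive separation we are after.

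Guided by this decomposition, on an enlarged probability space I would construct an independent pair $(Y,(\chi_x)_{x\in\Z_n^d})$ with $Y\sim\mathcal N(0,(2d)^{-2}n^d L^2)$ and $(\chi_x)$ a centered Gaussian field of covariance $H$. Positive semidefiniteness of $H$, needed for $(\chi_x)$ to exist, is immediate from its Fourier representation above, which expresses $H$ as a non-negative linear combination of the rank-one positive semidefinite kernels $(x,y)\mapsto \overline{\psi_a(x)}\psi_a(y)$. The Gaussian vector $(Y+\chi_x)_{x\in\Z_n^d}$ is then centered with covariance equal to $\E[\eta(x)\eta(y)]$ by construction, and since the law of a centered Gaussian vector is determined by its covariance, $(\eta(x))_{x\in\Z_n^d} \stackrel{d}{=} (Y+\chi_x)_{x\in\Z_n^d}$ follows.

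For the odometer representation, I would invoke the Levine et al. identity $(e_n(x))_x \stackrel{d}{=} (\eta(x)-\min_{z\in\Z_n^d}\eta(z))_x$ recalled in the introduction. Substituting the decomposition, the shift $Y$ cancels out:
$$\eta(x)-\min_{z\in \Z_n^d}\eta(z) \stackrel{d}{=} (Y+\chi_x) - \min_{z\in \Z_n^d}(Y+\chi_z) = \chi_x - \min_{z\in \Z_n^d}\chi_z,$$
which is the claimed formula. The only technical point I see is the Fourier computation producing $H$; this is a direct application of \eqref{eq:defL}--\eqref{eq:20}, so I do not anticipate any structural obstacle.
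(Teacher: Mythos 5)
Your proposal is correct and follows essentially the same route as the paper: Parseval's identity to diagonalize the covariance in the characters $\psi_a$, isolation of the zero mode as the constant $(2d)^{-2}n^dL^2$, equation \eqref{eq:20} to identify the $a\neq 0$ contribution with $H(x,y)$, positive semidefiniteness of $H$ read off from its Fourier representation (the paper verifies this by writing the quadratic form as $\frac{n^{-d}}{16}\sum_{z\neq 0}|d(z)|^2\ge 0$, which is the same observation), and cancellation of the common shift $Y$ in $\eta(x)-\min_z\eta(z)$. No gaps.
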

This decomposition is similar in spirit to the one in the proof of \citet[Proposition 1.3]{LMPU}, but we stress that the random fields we find are different. The proof of the above Proposition can be found in Subsection~\ref{subsec:show_prop}. As a consequence, to achieve Theorem~\ref{thm:main} it will suffice to determine the scaling limit of the $\chi$ field, because test functions have zero average, and hence we can get rid of the minimum appearing in the odometer representation. We will therefore show
\begin{itemize}
\item[(P1)] $\left(\mathcal L(\Xi_n)\right)_{n\in \N}$ is tight in the space $\mathcal H_{-\epsilon}(\T^d)$ where $-\epsilon<-\frac{d}{2}$.
\item[(P2)]\label{one_spade} From the above tightness result, there exists a subsequential scaling limit $\Xi=\lim_{k\to+\infty}\Xi_{n_k}$ for the convergence in law in the space $\mathcal H_{-\epsilon}$. The proof is complete once we show this limit is unique: by \citet[Section~2.1]{ledoux:talagrand}, it suffices to prove that, for all mean-zero test functions $u\in C^\infty(\T^d)$,
\[
\lim_{n\to+\infty}\E\left[\exp\left(\i\la \Xi_n,\,u\ra\right)\right]=\Phi(u),
\]
where the RHS is the characteristic function of $\Xi$. We will calculate the limit of the second moment of $\la \Xi_n,\,u\ra$ directly in $d\le 3$ and through a mollifying procedure in $d\ge 4$.
\end{itemize}
This will conclude the proof. Since the ``finite dimensional'' convergence is somewhat more interesting, we will defer the tightness proof to Subsection~\ref{subsec:tightness} and show (P2) in Subsection~\ref{subsec:marginal}.
\paragraph{\it A note on test functions.}
By the above construction, the set of test functions we will consider is the set of smooth functions $C^\infty(\mathbb T^d)$ with zero mean. We need to stress at this juncture an important remark: $C(\T^d)$ does {\em not} correspond to the class of continuous functions on $[-\frac12, \,\frac12)^d$, but only to functions which remain continuous on $\R^d$ when extended by periodicity. Similar comments apply to $C^{\infty}(\T^d)$ functions. 
See also \citet[Section 1, Chapter VII]{stein:weiss} for further discussions. {Therefore, when we consider $u: \R^d \to \R$ which is periodic and belongs to $C^\infty$, we consider its restriction to $[-\frac12, \,\frac12)^d$} while computing its integral on $\T^d$.
\section{Auxiliary results}\label{sec:proof_prop}
In this section we provide a proof of Proposition~\ref{prop:decomposition}. The result helps us tackle the singularity arising from the zero eigenvalue of $\Delta_g$ and will also reduce the determination of the scaling limit to finding the scaling limit of $(\chi_x)_{x\in \Z_n^d}$.

\subsection{Proof of Proposition~\ref{prop:decomposition}}\label{subsec:show_prop}
\begin{proof}
First, observe that, by Parseval's identity on the discrete torus, we can write the covariance of the Gaussian field $(\eta(x))_{x\in \Z_n^d}$ as
\begin{align}
\E&\left[ \eta(x) \eta(y)\right]=(2d)^{-2}\sum_{z\in \Z_n^d}g(z,\,x)g(z,\,y)\nonumber
\\&=(2d)^{-2}n^{d}\widehat{g_x}(0)\widehat{g_y}(0)+(2d)^{-2}n^{d}\sum_{z\in \Z_n^d\setminus\{0\}}\widehat{g_x}(z)\overline{\widehat{g_y}(z)}.\label{eq:splitting_sum}
\end{align}
First observe that using the description of $g(x,y)$ in terms of the simple random walk $(S_m)_{m\ge 0}$ on $\Z_n^d$ we derive
\begin{align}
\widehat{g_x}(0)&= n^{-d}\sum_{y\in \Z_n^d} g_x(y)=n^{-2d}\sum_{y\in \Z_n^d} \sum_{z\in \Z_n^d}\sum_{m\ge0} \mathsf P_x( S_m=y, m< \tau_z) \nonumber\\
&= n^{-2d}\sum_{z\in \Z_n^d} \sum_{y\in \Z_n^d\setminus\{z\}} \sum_{m\ge 0}\mathsf P_x(S_m=y, m< \tau_z)\nonumber\\
&= n^{-2d} \sum_{z\in \Z_n^d}\sum_{m\ge 0} \mathsf P_x(\tau_z>m)= n^{-2d} \sum_{z\in \Z_n^d}\E_x[ \tau_z].\label{eq:RTL}
\end{align}
One can notice that $\widehat{g_x}(0)$ is independent of $x$ by translation invariance. Hence we get that the first term in the left-hand side of \eqref{eq:splitting_sum} is a constant equal to
 $(2d)^{-2}n^d L^2$ having set $L:=n^{-2d}\sum_{q\in \Z_n^d}\mathsf E_x[\tau_q]$. As for the contribution from other sites,
\begin{align*}
(2d)^{-2}n^{d}\sum_{z\in \Z_n^d\setminus\{0\}}\widehat{g_x}(z)\overline{\widehat{g_y}(z)}&\stackrel{\eqref{eq:20}}{=}n^{-d}\sum_{z\in \Z_n^d\setminus\{0\}}\frac{\exp\left(-2\pi \i x\cdot \frac{z}{n}\right)\exp\left(2\pi \i y\cdot \frac{z}{n}\right)}{|\lambda_z|^2}.
\end{align*}
Define a centered Gaussian field $(\chi_x)_{x\in \Z_n^d}$ with covariance given by
\begin{equation}\label{eq:defH}
H(x,y)=\frac{n^{-d}}{16} \sum_{z\in \Z_n^d\setminus\{0\}} \frac{\exp(2\pi \i(y-x)\cdot \frac{z}{n})}{\left(\sum_{i=1}^d \sin^2\left(\pi \frac{z_i}{n}\right)\right)^2}.
\end{equation}
The field associated to $H$ is well-defined and in fact $H$ is positive definite. To see this, given a function $c:\Z_n^d\to \mathbb C$ one has that $\sum_{x,y\in \Z_n^d} H(x,y) c(x)\overline{c(y)} \ge 0$. Indeed,
\begin{align*}
&\sum_{x,y\in \Z_n^d} H(x,y) c(x)\overline{c(y)} =\frac{n^{-d}}{16}\sum_{x,y\in \Z_n^d}\sum_{z\in \Z_n^d\setminus\{0\}} \frac{\exp(2\pi (y-x)\cdot \frac{z}{n})}{\left(\sum_{i=1}^d \sin^2\left(\pi \frac{z_i}{n}\right)\right)^2}c(x)\overline{c(y)}\\
&=\frac{n^{-d}}{16}\sum_{z\in \Z_n^d\setminus\{0\}} d(z)\overline{d(z)}\ge 0,
\end{align*}
where $d(z):=\sum_{x\in \Z_n^d} {\exp(-2\pi \i x\cdot \frac{z}{n})}{{\left(\sum_{i=1}^d \sin^2(\pi \frac{z_i}{n})\right)^{-1}}}c(x).$ 
Hence it turns out that $(\eta(x))_{x\in \Z_n^d}$ has the same distribution as $(Y+\chi_x)_{x\in \Z_n^d}$ where $Y$ is a Gaussian random variable with mean zero and variance $(2d)^{-2} n^dL^2$ independent of the field $\chi$. To conclude, note that the odometer function satisfies $e_n(x)\stackrel{d}{=} \eta(x)-\min_{z\in \Z_n^d} \eta(z)\overset{d}= \chi_x-\min_{z\in\Z_n^d} \chi_z$. 
\end{proof}
\section{Proof of Theorem~\ref{thm:main}}\label{sec:proof_main}
We recall that it will suffice to prove the two properties (P1) and (P2) to achieve the Theorem. We first use to our advantage the fact that the test functions we consider have zero average, hence we can get rid of the minimum term which appears in the definition of the odometer.  Let us recall the field in \eqref{eq:deffield} $$\Xi_n(\cdot)=4\pi^2\sum_{z\in \T^d_n}n^{\frac{d-4}{2}}e_n({nz})\one_{B\left(z,\,\frac{1}{2n}\right)}(\cdot).$$
We define a linear functional on $C^\infty(\T^d)$ by setting
\[
\la\Xi_n,\,u \ra:=\int_{\T^d}\left(4\pi^2 n^{\frac{d-4}{2}}\sum_{z\in \T^d_n}\one_{B\left(z,\,\frac{1}{2n}\right)}(x)e_n({nz})\right)u(x)\De x.
\]
However using Proposition~\ref{prop:decomposition}, and the fact that $u$ has zero mean, one sees that
\begin{align*}
&\la\Xi_n,\,u \ra=4\pi^2\sum_{z\in \T_n^d} n^{\frac{d-4}{2}}\chi_{nz}\int_{B(z,\frac{1}{2n})}  u(x)\De x-4\pi^2\sum_{z\in \T_n^d} n^{\frac{d-4}{2}}\left(\min_{w\in\Z_n^d}\chi_{w}\right)\int_{B(z,\frac{1}{2n})}  u(x)\De x\\
&=4\pi^2\sum_{z\in \T_n^d} n^{\frac{d-4}{2}}\chi_{nz}\int_{B(z,\frac{1}{2n})}  u(x)\De x=\la \Xi'_n,\,u\ra
\end{align*}
letting 
$$
\Xi_n'(\cdot):=4\pi^2\sum_{z\in \T^d_n}n^{\frac{d-4}{2}}\chi_{nz}\one_{B\left(z,\,\frac{1}{2n}\right)}(\cdot)
$$
By the theory of Gaussian Hilbert spaces of Subsection~\ref{subsec:review}, $\Xi_n=\Xi_n'$ in distribution. Hence in the sequel we will, with a slight abuse of notation, consider $\Xi_n'$ but denote it simply as $\Xi_n$, since the law of the two fields is the same.
We are now ready to begin with (P2).  
\subsection{Proof of {(P2)}}\label{subsec:marginal}
\paragraph{\it Overview of the proof.}We have just seen that
\[
\la\Xi_n,\,u \ra=4\pi^2\sum_{z\in \T_n^d} n^{\frac{d-4}{2}}\chi_{nz}\int_{B(z,\frac{1}{2n})}  u(x)\De x.
\]
We now replace the integral over the ball above by the value at its center and gather the remaining error term. More precisely we get
\begin{align*}
&4\pi^2\sum_{z\in \T_n^d} n^{\frac{d-4}{2}}\chi_{nz}\int_{B(z,\frac{1}{2n})}  u(x)\De x=4\pi^2\sum_{z\in \T_n^d} n^{\frac{d-4}{2}}\chi_{nz}n^{-d}\int_{B(z,\frac{1}{2n})}  n^d u(x)\De x\\
&=4\pi^2\sum_{z\in \T_n^d} n^{\frac{d-4}{2}}\chi_{nz}n^{-d}u(z) + 4\pi^2\sum_{z\in \T_n^d} n^{\frac{d-4}{2}}\chi_{nz}n^{-d}\left(\int_{B(z,\frac{1}{2n})}n^d u(x)\De x-u(z)\right)\\
&=4\pi^2 n^{-\frac{d+4}{2}}\sum_{z\in \T^d_n}\chi_{nz}u(z)+R_n(u).
\end{align*}
Here the remainder $R_n(u)$ is defined by
\eq{}\label{eq:rem_R_n}R_n(u):=4\pi^2\sum_{z\in \T_n^d} n^{\frac{d-4}{2}}\chi_{nz}n^{-d}\left(\int_{B(z,\,\frac{1}{2n})}n^d u(x)\De x-u(z)\right)=4\pi^2 n^{-\frac{d+4}{2}}\sum_{z\in \T_n^d}\chi_{nz} K_n(z)\eeq{}
where using that the volume of $B(z, \frac1{2n})$ is $n^{-d}$ we have
\eq{}\label{eq:K_n}K_n(z):=\int_{B(z,\,\frac{1}{2n})}n^d u(x)\De x-u(z)=n^d\left[\int_{B(z,\,\frac{1}{2n})}\left(u(x)-u(z)\right)\De x\right].\eeq{}
We observe that using the above decomposition one can split the variance of $\la\Xi_n,\,u\ra$ as 
\begin{align*}
\E\left[\la\Xi_n,\,u\ra^2 \right]&=16\pi^4 n^{-(d+4)}\sum_{z,\,z'\in \T_n^d} u(z)u(z') \E[ \chi_{nz}\chi_{nz'}]+\E\left[R_n(u)^2\right]\\
&+4\pi^2\E\left[n^{-\frac{d+4}{2}}\sum_{z\in \T_n^d} u(z)\chi_{nz}R_n(u)\right].
\end{align*}
To deal with the convergence of the above terms we need two  propositions. The first one shows that the first term yields the required limiting variance.
\begin{proposition}\label{lem:bhindi} In the notation of this Section,
\begin{align*}
16\pi^4\lim_{n\to+\infty}n^{-(d+4)}\sum_{z,\,z'\in \T_n^d} u(z)u(z') \E[ \chi_{nz}\chi_{nz'}]&=16\pi^4\lim_{n\to+\infty}n^{-(d+4)}\sum_{z,\,z'\in \T_n^d} u(z)u(z') H\left( {nz},\,{nz'}\right)\\
&=\|u\|^2_{{-1}}.
\end{align*}
\end{proposition}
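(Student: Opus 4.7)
The plan is to perform a Fourier expansion of $H$ on $\Z_n^d$, pass to the limit in each Fourier mode via dominated convergence, and recognise the resulting series as $\|u\|_{-1}^2$. Substituting \eqref{eq:defH} and using the change of indices $nz\mapsto z$, one gets
\[
H(nz,nz')=\frac{n^{-d}}{16}\sum_{w\in\Z_n^d\setminus\{0\}}\frac{\e^{2\pi\i(z'-z)\cdot w}}{\bigl(\sum_{i=1}^d\sin^2(\pi w_i/n)\bigr)^{2}};
\]
setting $T_n(w):=\sum_{z\in\T_n^d}u(z)\e^{-2\pi\i z\cdot w}$, the fact that $u$ is real-valued lets the double sum $\sum_{z,z'}u(z)u(z')\e^{2\pi\i(z'-z)\cdot w}$ factor as $|T_n(w)|^2$. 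Multiplying by $16\pi^4 n^{-(d+4)}$ and grouping powers of $n$ yields
\[
16\pi^4 n^{-(d+4)}\sum_{z,z'\in\T_n^d}u(z)u(z')H(nz,nz')=\pi^4\sum_{w\in\Z_n^d\setminus\{0\}}\frac{|n^{-d}T_n(w)|^2}{\bigl(n^{2}\sum_{i=1}^d\sin^2(\pi w_i/n)\bigr)^{2}},
\]
and $n^{-d}T_n(w)$ is a Riemann sum of step $1/n$ for $\widehat{u}(w)$.

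For each fixed $w\in\Z^d\setminus\{0\}$ the denominator tends to $(\pi^2\|w\|^2)^{2}$ since $n\sin(\pi w_i/n)\to\pi w_i$, while the Poisson/aliasing identity
\[
n^{-d}T_n(w)=\sum_{k\in\Z^d}\widehat{u}(w+nk),
\]
combined with the rapid decay of $\widehat{u}$ (since $u\in C^\infty(\T^d)$), gives $n^{-d}T_n(w)\to\widehat{u}(w)$ as $n\to+\infty$. Thus each term converges to $|\widehat u(w)|^2/\|w\|^4$ after the prefactor $\pi^4$, and the sum over $\Z^d\setminus\{0\}$ is precisely $\|u\|_{-1}^2$.

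The main work is to legitimise this termwise passage to the limit via dominated convergence. The elementary estimate $\sin(\pi t)\ge 2|t|$ on $[-\tfrac12,\tfrac12]$ produces the uniform bound $n^2\sum_i\sin^2(\pi w_i/n)\ge 4\|w\|^2$ for $w\in\Z_n^d\setminus\{0\}$. For the numerator, the aliasing identity together with $\|w+nk\|_\infty\ge n\|k\|_\infty/2$ (valid when $\|w\|_\infty\le n/2$ and $k\ne 0$) and the Schwartz decay $|\widehat{u}(\nu)|\le C_K(1+\|\nu\|)^{-K}$ yield
\[
|n^{-d}T_n(w)|\le C_K(1+\|w\|)^{-K}+C_K' n^{-K}.
\]
Since $1+\|w\|\le c_d\, n$ on the admissible range, the $n^{-K}$ tail can be absorbed into a constant multiple of $(1+\|w\|)^{-K}$, producing a uniform bound $|n^{-d}T_n(w)|\le C_K(1+\|w\|)^{-K}$. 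Each summand is then dominated by $C_K(1+\|w\|)^{-2K-4}$, summable on $\Z^d\setminus\{0\}$ once $K$ is large enough, and dominated convergence yields the claim. The principal obstacle is exactly this uniform aliasing bound: the naive splitting of $n^{-d}T_n(w)$ into its principal and aliased contributions produces an $n$-dependent tail $O(n^{-K})$ that only becomes harmless once one exploits the constraint $\|w\|_\infty\le n/2$ imposed by the choice of representative in $\Z_n^d$ to trade $n^{-K}$ against $(1+\|w\|)^{-K}$.
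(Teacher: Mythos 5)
Your proof is correct, and it takes a genuinely different route from the paper's. The paper first sandwiches the sine denominator between $\|\pi w\|^{-4}$ and $(\|\pi w\|^{-2}+cn^{-2})^2$ to reduce to the sum with $\|w\|^{-4}$, and then splits into cases: for $d\le 3$ it applies dominated convergence directly, using only the crude bound $|\widehat{u_n}(w)|\le C$ together with $\sum_{w\ne 0}\|w\|^{-4}<+\infty$; for $d\ge 4$, where that series diverges, it inserts a mollifier $\widehat{\phi_\kappa}$, controls the replacement error uniformly in $n$ by $C\kappa$, and performs a double limit $n\to+\infty$ followed by $\kappa\to 0$. You instead exploit the smoothness of $u$ more fully: the aliasing identity $n^{-d}T_n(w)=\sum_{k\in\Z^d}\widehat u(w+nk)$, combined with the constraint $\|w\|_\infty\le n/2$ to absorb the $O(n^{-K})$ aliased tail into $(1+\|w\|)^{-K}$, yields a rapid-decay bound on the discrete Fourier coefficients that is uniform in $n$. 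This single dominating function makes one application of dominated convergence work in every dimension, eliminating both the case split and the mollifier. The trade-off is that the paper's machinery (the two-sided sine estimate \eqref{eq:flash_gordon_bleu} and the mollifier $\phi_\kappa$ with its decay \eqref{eq:decay_phi}) is reused verbatim in the tightness proof and in the kernel computation of Theorem~\ref{corol:kernel}, so the authors get those ingredients for free, whereas your argument is self-contained but specific to this Proposition. Your lower bound $\sin(\pi t)\ge 2|t|$ on $[-\frac12,\frac12]$ is exactly the paper's \eqref{eq:mongoose}, and your pointwise limits and bookkeeping of the powers of $n$ check out.
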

The second Proposition says the remainder term is small. 
\begin{proposition}\label{lem:small}
In the notations of this Section, $\lim_{n\to+\infty}R_n(u)=0$ in $L^2$.
\end{proposition}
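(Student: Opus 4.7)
The plan is to exploit the smoothness of $u$ to show that the kernel $K_n(z)$ is of order $n^{-2}$ rather than $O(1)$, and then use this together with Proposition~\ref{lem:bhindi} to conclude that $\E[R_n(u)^2] = O(n^{-4})$.

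First, I expand the variance using the covariance of $\chi$:
\[
\E[R_n(u)^2] = 16\pi^4 n^{-(d+4)} \sum_{z, z' \in \T_n^d} K_n(z) K_n(z') H(nz, nz').
\]
Next I analyze $K_n(z)$ via Taylor expansion of $u$ around $z$ to second order. Since $B(z, 1/(2n))$ is the cube $\prod_i[z_i - \tfrac{1}{2n}, z_i + \tfrac{1}{2n}]$, symmetric about $z$, the linear term $\nabla u(z) \cdot (x - z)$ integrates to zero, and the off-diagonal Hessian terms vanish by the same symmetry. A short computation of $\int_B (x_i - z_i)^2 \De x = n^{-(d+2)}/12$ together with the $C^3$-smoothness of $u$ yields
\[
K_n(z) = \frac{n^{-2}}{24}\Delta u(z) + O(n^{-3}),
\]
uniformly in $z \in \T_n^d$, with the implicit constant depending on $\|u\|_{C^3(\T^d)}$.

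Setting $v_n(z) := n^2 K_n(z)$, I have $v_n \to v := \tfrac{1}{24}\Delta u$ uniformly with $\|v_n - v\|_\infty = O(n^{-1})$. Crucially, $v \in C^\infty(\T^d)$ and has zero average, since $\int_{\T^d} \Delta u \De x = 0$ by the divergence theorem on the torus. Substituting $K_n = n^{-2} v_n$ into the variance gives
\[
\E[R_n(u)^2] = n^{-4} \cdot 16\pi^4 n^{-(d+4)} \sum_{z, z' \in \T_n^d} v_n(z) v_n(z') H(nz, nz').
\]
The plan is to show that the bracketed quantity is bounded uniformly in $n$ (in fact, convergent to $\|v\|_{-1}^2$). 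Writing $v_n = v + e_n$ with $\|e_n\|_\infty = O(n^{-1})$, the contribution $\sum v(z) v(z') H(nz, nz')$ is handled by Proposition~\ref{lem:bhindi} applied to the smooth mean-zero function $v$; for the cross and quadratic terms in $e_n$, I would apply Cauchy-Schwarz in $L^2(\Omega)$ together with Parseval on $\T_n^d$: using the Fourier representation \eqref{eq:defH}, one finds $\sum_{z,z'} e_n(z) e_n(z') H(nz, nz') = \frac{n^{-d}}{16} \sum_{w \neq 0} |\widehat{e_n}(w)|^2 / (\sum_i \sin^2(\pi w_i/n))^2$, which is bounded by $O(n^{d+2})$ using Parseval and the lower bound $\sum_i \sin^2(\pi w_i/n) \geq c/n^2$. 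Thus $n^{-(d+4)} \sum_{z,z'} v_n(z) v_n(z') H(nz, nz') \to \|v\|_{-1}^2$, and consequently $\E[R_n(u)^2] = O(n^{-4}) \to 0$.

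The main obstacle is ensuring the bound on the bracketed sum holds uniformly in dimension. A naive estimate like $|H(nz, nz')| \leq H(0,0)$ combined with the trivial bound $|v_n| \leq C$ fails for $d \geq 8$, since $\sum_{z,z'} 1 = n^{2d}$ dominates. The resolution exploits that $v$ is smooth (so $\widehat v(w)$ decays rapidly) and that the error $e_n = v_n - v$ is small in $L^\infty$; both effects work together, via Fourier, to give a dimension-free bound. In fact, since the direct Cauchy-Schwarz argument reduces the $e_n$ contribution to $O(n^{-2})$, the convergence to $\|v\|_{-1}^2$ holds with an explicit rate in every dimension, completing the proof.
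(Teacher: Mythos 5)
Your proposal is correct, but it takes a more refined route than the paper. The paper's proof is deliberately crude on the Taylor side: Lemma~\ref{lem:shave} only extracts the first-order bound $\sup_z|K_n(z)|\le Cn^{-1}$ via the mean value theorem, and then the whole variance is killed by the same Fourier mechanism you use for your error terms --- positivity of $\sum_{z,z'}K_n(z)K_n(z')\e^{2\pi\i(z-z')\cdot w}=n^{2d}|\widehat{K'_n}(w)|^2$, the eigenvalue lower bound \eqref{eq:mongoose}, the trivial bound $\|w\|\ge 1$, and Parseval --- yielding $\E[R_n(u)^2]\le C\|K_n\|_{L^\infty}^2\le Cn^{-2}$. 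You instead push the Taylor expansion to second order, use the symmetry of the cube $B(z,\tfrac1{2n})$ to cancel the gradient and off-diagonal Hessian contributions, and identify $K_n(z)=\tfrac{n^{-2}}{24}\Delta u(z)+\O{n^{-3}}$; this is a correct computation (including $\int_B(x_i-z_i)^2\De x=\tfrac1{12}n^{-(d+2)}$ and the fact that $\Delta u$ is smooth with zero average, so Proposition~\ref{lem:bhindi} applies to it). Your treatment of the $e_n=v_n-v$ remainders via \eqref{eq:defH}, the lower bound on $\sum_i\sin^2(\pi w_i/n)$ and Parseval is sound and gives the stated $\O{n^{d+2}}$, hence an $\O{n^{-2}}$ contribution after normalization. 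What your approach buys is a sharper rate, $\E[R_n(u)^2]=\O{n^{-4}}$ with the explicit leading constant $\|\tfrac1{24}\Delta u\|_{-1}^2$ for $n^4\E[R_n(u)^2]$, at the cost of re-invoking Proposition~\ref{lem:bhindi} and a decomposition $v_n=v+e_n$; the paper's argument is shorter and self-contained since $\O{n^{-2}}$ already suffices for the $L^2$ convergence to zero. You are also right that a naive termwise bound on $H$ fails in high dimension and that the Parseval route is what makes the estimate dimension-free --- this is exactly the role the positivity observation plays in the paper's version as well.
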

Then an application of the Cauchy-Schwarz inequality will allow us to deduce that
\[
\lim_{n\to+\infty}\E\left[\la \Xi_n,\,u\ra^2 \right]=\|u\|^2_{{-1}}
\]
and the condition (P2) will be ensured.
We give the proof of Proposition~\ref{lem:bhindi}, which is the core of our argument, in Subsection~\ref{subsubsec:marginal} and of Proposition~\ref{lem:small} in Subsection~\ref{subsubsec:rem}.
\subsubsection{{Proof of Proposition~\ref{lem:bhindi}}}\label{subsubsec:marginal}
Before we begin our proof we would like to prove a bound which would be crucial in estimating  the eigenvalues of the Laplacian on the discrete torus. This lemma will be used later for other parts of the proof too.
\begin{lemma}
There exists $c>0$ such that for all $n\in \N$ and $w\in \Z_n^d\setminus\{0\}$  we have
\begin{align}
\frac{1}{\|\pi w\|^4}\le n^{-4}\left(\sum_{i=1}^d \sin^2\left(\frac{\pi w_i}{n}\right)\right)^{-2}\le \left(\frac{1}{\|\pi w\|^2}+\frac{c}{n^{2}}\right)^2\label{eq:flash_gordon_bleu}
\end{align}
\end{lemma}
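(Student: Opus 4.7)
The plan is to reduce the two-sided inequality to a one-variable comparison between $\sin^{2}$ and the identity. Introduce $\theta_{i}:=\pi w_{i}/n\in[-\pi/2,\pi/2]$, let $S:=\sum_{i=1}^{d}\sin^{2}(\theta_{i})$ and $X:=\sum_{i=1}^{d}\theta_{i}^{2}=\|\pi w\|^{2}/n^{2}$. All three quantities appearing in \eqref{eq:flash_gordon_bleu} are positive, so after taking square roots the claim is equivalent to
\[
\frac{1}{n^{2}X}\;\le\;\frac{1}{n^{2}S}\;\le\;\frac{1}{n^{2}X}+\frac{c}{n^{2}}.
\]
The left inequality is immediate from $\sin^{2}(\theta)\le\theta^{2}$, which gives $S\le X$. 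The content of the lemma therefore lies in the right inequality, which after clearing denominators is equivalent to the dimensionless estimate
\[
\frac{X-S}{XS}\le c.
\]

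To control the numerator $X-S$ I would use the Taylor-type refinement $\sin^{2}(\theta)\ge\theta^{2}-\theta^{4}/3$, valid for every $\theta\in\R$. The cleanest verification is to observe that $g(\theta):=\theta^{4}/3-\theta^{2}+\sin^{2}(\theta)$ satisfies $g(0)=g'(0)=g''(0)=g'''(0)=0$ and $g^{(4)}(\theta)=16\sin^{2}(\theta)\ge 0$, hence $g\ge0$ by repeated integration from the origin. Summing over $i$ and exploiting $\sum_{i}\theta_{i}^{4}\le\bigl(\sum_{i}\theta_{i}^{2}\bigr)^{2}=X^{2}$ (valid since $\theta_i^2\ge 0$) yields $X-S\le X^{2}/3$. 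To control the denominator from below, I would invoke Jordan's inequality $|\sin(\theta)|\ge 2|\theta|/\pi$ on $[-\pi/2,\pi/2]$, which gives $\sin^{2}(\theta_{i})\ge 4\theta_{i}^{2}/\pi^{2}$, and thus $S\ge 4X/\pi^{2}$ and $XS\ge 4X^{2}/\pi^{2}$. Combining,
\[
\frac{X-S}{XS}\le\frac{X^{2}/3}{4X^{2}/\pi^{2}}=\frac{\pi^{2}}{12},
\]
so any $c\ge\pi^{2}/12$ does the job.

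There is no substantive obstacle; the entire argument is elementary calculus in one variable. The only conceptual subtlety is recognising that the apparent dimensional inhomogeneity of the right-hand bound in \eqref{eq:flash_gordon_bleu} — an additive $O(1/n^{2})$ error on top of the leading $1/\|\pi w\|^{2}$ — is precisely what the $O(\theta^{4})$ correction in $\theta^{2}-\sin^{2}(\theta)$ produces once divided by $\theta^{2}\sin^{2}(\theta)\gtrsim\theta^{4}$; this is why an $O(\theta^{4})$ upper bound on the numerator combined with the crude $\Omega(\theta^{2})$ Jordan lower bound on $\sin^{2}(\theta)$ suffices, and why no finer estimate (e.g.~the next Taylor coefficient) is needed.
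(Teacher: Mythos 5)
Your proof is correct and follows essentially the same route as the paper's: both reduce to bounding $\tfrac{1}{S}-\tfrac{1}{X}=\tfrac{X-S}{XS}$, control the numerator by the quartic Taylor correction $\theta^2-\sin^2\theta=\mathrm{O}(\theta^4)$, and bound the denominator from below via Jordan's inequality $\sin^2\theta\ge 4\theta^2/\pi^2$. The only differences are cosmetic: you work in the normalized variables $X,S$ and track the explicit constant $c=\pi^2/12$, whereas the paper keeps $\|w\|$ and $n$ explicit and leaves the constant unspecified.
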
 
\begin{proof}
We consider
\begin{align*}
\sum_{i=1}^d&n^2\sin^2\left(\frac{\pi w_i}{n}\right)=\sum_{i=1}^d w_i^2\pi^2\left(\frac{\sin\left(\theta_i^n\right)}{\theta_i^n}\right)^2
\end{align*}
with $\theta_i^n:=\pi{ w_i}{n^{-1}}\in \left[-{\pi}/{2},\,{\pi}/{2}\right]\setminus\{0\}.$
This gives the left-hand side of \eqref{eq:flash_gordon_bleu}. Moreover
\begin{align*}
\left\|\pi w\right\|^2-\sum_{i=1}^d n^2\sin^2\left(\frac{\pi w_i}{n}\right)=\sum_{i=1}^d w_i^2\pi^2\left(1-\left(\frac{\sin\left(\theta_i^n\right)}{\theta_i^n}\right)^2\right)\le C\|w\|^4 n^{-2}
\end{align*}
because $0\le 1-{\sin^2(x)}{x^{-2}}\le C\,x^{2}$ for some $C>0$. In this way
\begin{align}
\frac{1}{\sum_{i=1}^d n^2\sin^2\left(\frac{\pi w_i}{n}\right)}&-\frac{1}{\|\pi w\|^2}=\frac{\|\pi w\|^2-\sum_{i=1}^d n^2\sin^2\left(\frac{\pi w_i}{n}\right)}{\sum_{i=1}^d n^2\sin^2\left(\frac{\pi w_i}{n}\right)\|\pi w\|^2}\nonumber\\
&\le \frac{C\|w\|^4n^{-2}}{\sum_{i=1}^d n^2\sin^2\left(\frac{\pi w_i}{n}\right)\|\pi w\|^2}.\label{eq:refer}
\end{align}
Considering that, for $x\in \left[-{\pi}/{2},\,{\pi}/{2}\right]$, ${\sin^2(x)}{x^{-2}}\in\left[{4}/{\pi^2},\,1\right]$, one gets that
\begin{equation}\label{eq:mongoose}
\sum_{i=1}^d n^2\sin^2\left(\frac{\pi w_i}{n}\right)\ge {4}\|w\|^2
\end{equation}
which plugged into \eqref{eq:refer} gives that 
$$
\frac{1}{\sum_{i=1}^d n^2\sin^2\left(\frac{\pi w_i}{n}\right)}-\frac{1}{\|\pi w\|^2}\le C{n^{-2}}
$$
for $C>0$, thus \eqref{eq:flash_gordon_bleu} is proven.
\end{proof}
\begin{remark} The equation \eqref{eq:mongoose} is not enough to obtain sharp asymptotics for $\sum_{i=1}^d n^2 \sin^2\left(\pi{w_i}/{n}\right)$ as $n\to\infty$. On the other hand, we will use it in the sequel while looking for a uniform lower bound for the same quantity for all $w\neq 0$.
\end{remark}
We begin with the proof of Proposition~\ref{lem:bhindi}.
Let $u : \T^d\to \R$ be a smooth function with zero mean. Define $u_n:\Z_n^d\to \R$ as $u_n(z):= u(\frac{z}{n})$. Note that
\begin{align}
&16\pi^4 n^{-2d} n^{d-4}\sum_{z,\,z'\in \T_n^d} u(z)u(z') \E[ \chi_{nz}\chi_{nz'}]= 16\pi^4 n^{-2d}n^{d-4}  \sum_{z,\,z'\in\Z_n^d} u(z) u(z') H(nz, nz')\nonumber\\
&=\pi^4 n^{-2d}n^{-4} \sum_{z,\,z'\in\T_n^d} u(z) u(z') \sum_{w\in\Z_n^d\setminus\{0\}}\frac{\exp(2\pi \i(z-z')\cdot w)}{\left(\sum_{i=1}^d\sin^2\left(\frac{\pi w_i}{n}\right)\right)^2}.\label{eq:mmt}
\end{align}
To show the above expression converges it is enough to consider the convergence of 
\begin{equation}\label{eq:change}
n^{-2d}\sum_{z,\,z'\in\T_n^d} u(z) u(z') \sum_{w\in\Z_n^d\setminus\{0\}}\frac{\exp(2\pi \i(z-z')\cdot w)}{\| w\|^4}.
\end{equation}
This can be justified by showing that~\eqref{eq:mmt} can be bounded above and below appropriately by 
\eqref{eq:change}. Now observing that 
\begin{equation}\label{eq:chuchutv}
n^{-2d}\sum_{z,\,z'\in\T_n^d} u(z) u(z') \exp(2\pi \i(z-z')\cdot w)= \left|\widehat{ u_n}(w)\right|^2\ge 0
\end{equation}
the lower bound of \eqref{eq:flash_gordon_bleu} immediately gives
\begin{align*}
\pi^4 n^{-2d-4} &\sum_{z,\,z'\in\T_n^d} u(z) u(z') \sum_{w\in\Z_n^d\setminus\{0\}}\frac{\exp(2\pi \i(z-z')\cdot w)}{\left(\sum_{i=1}^d\sin^2\left(\frac{\pi w_i}{n}\right)\right)^2}\\
 &\ge n^{-2d} \sum_{z,\,z'\in\T_n^d} u(z) u(z') \sum_{w\in\Z_n^d\setminus\{0\}}\frac{\exp(2\pi \i(z-z')\cdot w)}{\| w\|^4}.
\end{align*}
For the upper bound, using the bound in~\eqref{eq:flash_gordon_bleu}  we get
\begin{align*}
&\pi^4 n^{-2d-4} \sum_{z,\,z'\in\T_n^d} u(z) u(z') \sum_{w\in\Z_n^d\setminus\{0\}}\frac{\exp(2\pi \i(z-z')\cdot w)}{\left(\sum_{i=1}^d\sin^2\left(\frac{\pi w_i}{n}\right)\right)^2}\\
&\le \pi^4 n^{-2d} \sum_{z,\,z'\in\T_n^d} u(z) u(z') \sum_{w\in\Z_n^d\setminus\{0\}}{\exp(2\pi \i(z-z')\cdot w)}\left(\frac1{\| \pi w\|^{2}}+\frac{c}{n^{2}}\right)^2.
\end{align*}
Now we expand the square: the first term gives the correct upper bound as in \eqref{eq:change} and the other two terms are negligible. In fact we show firstly that
$$\lim_{n\to+\infty}  c n^{-2d}n^{-2} \sum_{z,\,z'\in\T_n^d} u(z) u(z') \sum_{w\in\Z_n^d\setminus\{0\}}\frac{\exp(2\pi \i(z-z')\cdot w)}{\| w\|^2}=0.$$
Using \eqref{eq:chuchutv} and Parseval's identity we get
\begin{align*}
&c n^{-2d}n^{-2} \sum_{z,\,z'\in\T_n^d} u(z) u(z') \sum_{w\in\Z_n^d\setminus\{0\}}\frac{\exp(2\pi \i(z-z')\cdot w)}{\| w\|^2}= c n^{-2}\sum_{w\in \Z_n^d\setminus\{0\}} \frac1{\| w\|^2} \left|\widehat{u_n}(w)\right|^2\\
&\overset{\|w\|\ge1}\le c n^{-2}\sum_{w\in \Z_n^d\setminus\{0\}} |\widehat{u_n}(w)|^2\le c n^{-2}\sum_{w\in \Z_n^d} \left|\widehat{u_n}(w)\right|^2\\
&= c n^{-2}n^{-d} \sum_{w\in\Z_n^d} \left|u\left(\frac{w}{n}\right)\right|^2=c n^{-2}\left(n^{-d}\sum_{w\in \T_n^d} |u(w)|^2\right).
\end{align*}
Since $n^{-d}\sum_{w\in \T_n^d} |u(w)|^2\to \int_{\T^d} |u(w)|^2\De w<+\infty$ we get that the second term converges to zero. Note that the same computation shows
$$n^{-2d} n^{-4}\sum_{z,\,z'\in\T_n^d} u(z) u(z') \sum_{w\in\Z_n^d\setminus\{0\}}\exp(2\pi \i(z-z')\cdot w)\le n^{-4}\left(n^{-d}\sum_{w\in \T_n^d} |u(w)|^2\right),$$
which again goes to zero as $n\to+ \infty$. So this shows that we can from now on concentrate on showing the convergence of \eqref{eq:change}.
We split now our proof, according to whether $d\le 3$ or $d\ge 4$. 
\paragraph{\it The case $d\le 3$.}In the first case, the argument is more straightforward: we rewrite  
\[
\eqref{eq:change}= \sum_{w\in\Z^d\setminus\{0\}}\|w\|^{-4}\one_{w\in \Z_n^d}\sum_{z\in\T_n^d} n^{-d}u(z)\exp(2\pi \i z\cdot w) \sum_{z'\in\T_n^d}n^{-d}u(z')\exp(-2\pi \i z'\cdot w).
\]
Since $\sum_{z\in\T_n^d} n^{-d}u(z)\exp(2\pi \i z\cdot w)$ is bounded above uniformly in $n$, and $\sum_{w\in \Z^d\setminus\{0\}}\|w\|^{-4}<+\infty$ in $d<4$, we can apply the dominated converge theorem and obtain
\[
\lim_{n\to+\infty}\eqref{eq:change}=\sum_{w\in\Z^d\setminus\{0\}}\|w\|^{-4}\left|\widehat u(w)\right|^2=\|u\|^2_{-1}
\]
which concludes the proof of (P2) for $d\le 3$.
\paragraph{\it The case $d\ge 4$.}\label{par:d>3} Here it is necessary to think of another strategy since $\sum_{w\in \Z^d}\|w\|^{-4}$ is not finite. 
Let $\phi\in \mathcal S(\R^d)$, the Schwartz space, be a mollifier supported on $[-\frac12, \,\frac12)^d$ with
$\int_{\R^d}\phi(x)\De x=1$ and let $\phi_\kappa(x):= \kappa^{-d}\phi(\frac{x}{\kappa})$ for $\kappa>0$. It is a classical result \cite[Theorem~7.22]{Rudin} that for $\delta=0,\,1,\,2\,\ldots$ there exists $A>0$ (depending on $\kappa$ and $\delta$) such that 
\eq{}\label{eq:decay_phi}
\left|\widehat{\phi_\kappa}(w)\right|\le A\left(1+\|w\|\right)^{-\delta} .
\eeq{}
Now to show the convergence of \eqref{eq:change} is equivalent to considering 
$$\lim_{\kappa\to 0}\lim_{n\to +\infty}n^{-2d}\sum_{z,\,z'\in\T_n^d} u(z) u(z') \sum_{w\in\Z_n^d\setminus\{0\}}\widehat{\phi_\kappa}(w)\frac{\exp(2\pi \i(z-z')\cdot w)}{\|w\|^4}$$
since we claim that
\begin{align}
&\lim_{\kappa\to 0}\limsup_{n\to +\infty}n^{-2d}\sum_{z,\,z'\in\T_n^d} u(z) u(z') \sum_{w\in\Z_n^d\setminus\{0\}}\left(\widehat{\phi_\kappa}(w)-1\right)\frac{\exp(2\pi \i(z-z')\cdot w)}{\|w\|^4}=0.\label{eq:claim0}
\end{align}
Indeed, using the fact that $\int_{\R^d} \phi_\kappa(x)\De x=1$ we have
$$\left|\widehat{\phi_\kappa}(w)-1\right|\le  \int_{\R^d} \phi_\kappa(y)\left|\e^{2\pi \i y\cdot w}-1\right|\De y.$$
Exploiting the fact that $|\exp(2\pi \i x)-1|^2= 4\sin^2(\pi x)$ and $|\sin(x)|\le |x|$ we obtain
\begin{align}
&\left|\widehat{\phi_\kappa}(w)-1\right|
\le C\kappa\|w\|\int_{\R^d} \|y\|\phi(y) \De y\le C \kappa\|w\|\label{eq:banjaara_two}
\end{align}
due to the fact that $\phi$ is supported on $[-\frac{1}{2},\,\frac{1}{2})^d$. Recalling $u_n(z)=u(\frac{z}{n})$ and plugging the estimate \eqref{eq:banjaara_two} in \eqref{eq:claim0} we get that
\begin{align}
&\left|n^{-2d}\sum_{w\in\Z_n^d\setminus\{0\}}\frac{\widehat{\phi_\kappa}(w)-1}{\|w\|^4}\sum_{z,\,z'\in\T_n^d} u(z) u(z') \exp(2\pi \i(z-z')\cdot w)\right|\nonumber\\
&\le C\kappa\sum_{w\in \Z_n^d\setminus \{0\}} \|w\|^{-3} \left|\widehat{u_n} (w)\right|^2.\label{bahbah}
\end{align}
Using $\|w\|\ge 1$ we have
\begin{align*}
& \sum_{w\in \Z_n^d\setminus \{0\}} \|w\|^{-3} \left|\widehat{u_{n}} (w)\right|^2\le \sum_{w\in \Z_n^d\setminus \{0\}} \left|\widehat{u_{n}} (w)\right|^2\le \sum_{w\in \Z_n^d} \left|\widehat{u_{n}} (w)\right|^2\\
&=n^{-d} \sum_{w\in \Z_n^d} \left|u\left(\frac{w}{n}\right)\right|^2 = n^{-d}\sum_{w\in \T_n^d} |u(w)|^2
\end{align*}
where we have used Parseval's identity. We observe then that 
\begin{align*}
&\limsup_{n\to+\infty}\left|n^{-2d}\sum_{w\in\Z_n^d\setminus\{0\}}\frac{\widehat{\phi_\kappa}(w)-1}{\|w\|^4}\sum_{z,\,z'\in\T_n^d} u(z) u(z') \exp(2\pi \i(z-z')\cdot w)\right|\\
&\le C\kappa\|u\|_{L^2(\T^d)}^2<+\infty.
\end{align*}
Taking the limit $\kappa\to 0$ in the previous expression we deduce the claim~\eqref{eq:claim0}.
Now we have to derive the limit of the following expression:
\begin{align}
&n^{-2d}\sum_{z,\,z'\in\T_n^d}  u(z) u(z') \sum_{w\in\Z_n^d\setminus\{0\}}\widehat{\phi_\kappa}(w)\frac{\exp(2\pi \i(z-z')\cdot w)}{\|w\|^4}.\label{eq:fish}
\end{align}
Since $\widehat{\phi_\kappa}$ has a fast decay at infinity, and 
$$\lim_{n\to+\infty}n^{-d}\sum_{z\in\T_n^d} u(z) \exp(2\pi \i z\cdot w)=\widehat u(w)$$
we can apply the dominated convergence theorem to obtain
\[
\lim_{n\to+\infty}n^{-2d}\sum_{z,\,z'\in\T_n^d} u(z) u(z') \sum_{w\in\Z_n^d\setminus\{0\}}\widehat{\phi_\kappa}(w)\frac{\exp(2\pi \i(z-z')\cdot w)}{\|w\|^4}=\sum_{w\in\Z^d\setminus\{0\}}\widehat{\phi_\kappa}(w)\frac{\left|\widehat{u}(w)\right|^2}{\|w\|^4}.
\]
The bound $|\widehat{\phi_\kappa}(\cdot)|\le 1$ can be used to obtain a bound uniform in $\kappa$ on the right-hand side of the above expression: consequently we apply the dominated convergence letting $\kappa\to 0$ to achieve
\[
\lim_{\kappa\to 0}\lim_{n\to+\infty}n^{-2d}\sum_{z,\,z'\in\T_n^d} u(z) u(z') \sum_{w\in\Z_n^d\setminus\{0\}}\widehat{\phi_\kappa}(w)\frac{\exp(2\pi \i(z-z')\cdot w)}{\|w\|^4}=\sum_{w\in\Z^d\setminus\{0\}}\frac{\left|\widehat{u}(w)\right|^2}{\|w\|^4}=\|u\|^2_{-1}.
\]
This concludes the proof of Proposition~\ref{lem:bhindi}.
\subsubsection{{Proof on the remainder: Proposition~\ref{lem:small}}}\label{subsubsec:rem}
We owe the reader now the last proofs on $R_n$ (see \eqref{eq:rem_R_n}). First we state the following
\begin{lemma}\label{lem:shave}
There exists a constant $C>0$ such that $\sup_{z\in \T^d}|K_n(z)|\le Cn^{-1}$.
\end{lemma}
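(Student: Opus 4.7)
The plan is to observe that since $u \in C^\infty(\T^d)$, its gradient $\nabla u$ is bounded on $\T^d$ (a smooth function on a compact manifold has bounded derivatives of all orders), and then to apply a crude first-order Taylor estimate inside the small ball $B(z, \frac{1}{2n})$.

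Concretely, I would first rewrite
\[
K_n(z) = n^d \int_{B(z, \frac{1}{2n})} \bigl(u(x) - u(z)\bigr) \De x,
\]
and use the fundamental theorem of calculus along the segment from $z$ to $x$ to bound
\[
|u(x) - u(z)| \le \|\nabla u\|_\infty \, \|x - z\|
\]
for every $x \in B(z, \tfrac{1}{2n})$. Since $B(z, \tfrac{1}{2n})$ is the $\ell^\infty$-ball of radius $\tfrac{1}{2n}$, every $x$ in it satisfies $\|x-z\|_\infty \le \tfrac{1}{2n}$, and therefore $\|x-z\| \le \tfrac{\sqrt{d}}{2n}$. This uniform bound on the integrand, combined with $\mathrm{Leb}(B(z, \tfrac{1}{2n})) = n^{-d}$, yields
\[
|K_n(z)| \le n^d \cdot n^{-d} \cdot \|\nabla u\|_\infty \cdot \frac{\sqrt{d}}{2n} = \frac{C}{n},
\]
with $C = \tfrac{\sqrt{d}}{2}\|\nabla u\|_\infty$, and the bound is uniform in $z \in \T^d$ as required.

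There is no real obstacle here: the one subtlety to keep track of is that the ball appearing in the statement is the $\ell^\infty$-ball (as fixed in the notation paragraph of Section~\ref{sec:main_res}), so one must convert the $\ell^\infty$-diameter $\tfrac{1}{n}$ into the Euclidean diameter via the factor $\sqrt{d}$. One could in fact improve the estimate to $O(n^{-2})$ by going to second order in Taylor and exploiting the cancellation of the linear term over the symmetric ball centred at $z$, but the lemma only requires $O(n^{-1})$, so the first-order argument above is already sufficient.
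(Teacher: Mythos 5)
Your proof is correct and follows essentially the same route as the paper: a first-order mean-value/Taylor bound $|u(x)-u(z)|\le \|\nabla u\|_{L^\infty(\T^d)}\|x-z\|$, the radius bound $\|x-z\|\le \frac{\sqrt d}{2n}$ on the $\ell^\infty$-ball, and the volume $n^{-d}$. The only difference is that you make the $\sqrt d$ conversion between the $\ell^\infty$ and Euclidean norms explicit, which the paper absorbs into the constant $C$.
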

\begin{proof}
Using the mean value theorem as $u\in C^\infty(\T^d)$ we get that, for some $c\in (0,1)$,
\begin{align*}
|K_n(z)| &\le n^d\int_{B\left(z,\frac1{2n}\right)} |u(x)-u(z)|\De x\le n^d\int_{B\left(z,\frac{1}{2n}\right)} \|\nabla u(cx+(1-c)z) \| \,\|z-x\|\De x\\
&\le C\frac{ n^d}{2n} \int_{B\left(z, \frac1{2n}\right)} \|\nabla u(cx+(1-c)z) \| \De x\le C \frac{\|\nabla u\|_{L^\infty(\T^d)}}{n}.
\end{align*}
Since $\|\nabla u\|_{L^\infty(\T^d)}<+\infty$ the claim follows.
\end{proof}
We reprise now the proof on the limit of $R_n(u).$
\begin{proof}[Proof of Proposition~\ref{lem:small}]
We first compute $\E\left[R_n(u)^2\right]$ obtaining
\begin{align*}
\E\left[R_n(u)^2\right]&=16\pi^4 n^{-2d}\sum_{z,\,z'\in \T^d_n}n^{d-4}H\left(nz,\,nz'\right)K_n(z)K_n\left(z'\right)\\
&\stackrel{\eqref{eq:mongoose}}{\le}{n^{-2d}} \sum_{z,\,z'\in \T^d_n}\sum_{w\in \Z_n^d\setminus\{0\}}\frac{\exp(2\pi \i (z-z')\cdot w)}{\|w\|^4}K_n(z)K_n\left(z'\right)\\
&\le n^{-2d} \sum_{z,\,z'\in \T^d_n}\sum_{w\in \Z_n^d\setminus\{0\}}{\exp(2\pi \i (z-z')\cdot w)}K_n(z)K_n\left(z'\right)
\end{align*}
since $\|w\|\ge 1$. Letting $K'_n(x): =K(\frac{x}{n})$, thanks to Lemma~\ref{lem:shave} we have that the previous expression is equal to
\begin{align*}
&\sum_{w\in \Z_n^d\setminus\{0\}}  \widehat{K'_n}(w)\overline{\widehat{K'_n}(w)} \leq \sum_{w\in \Z_n^d }  \widehat{K'_n}(w)\overline{\widehat{K'_n}(w)} \\
&= {n^{-d}} \sum_{w\in \Z_n^d }  K'_n(w)\overline{K'_n(w)}  
 \leq {||K_n||^2_{L^{\infty}(\T^d)}} \leq C n^{-2}.
\end{align*}
This shows immediately that $R_n(u)$ converges in $L^2$ to $0$. 
\end{proof}
We are then done with the proof of (P2) on page \pageref{one_spade}. 
\subsection{Tightness: proof of {(P1)}}\label{subsec:tightness}
We proceed to prove tightness.
Before that, we must introduce a fundamental result: Rellich's theorem.
\begin{theorem}[Rellich's theorem]
If $k_1<k_2$ the inclusion operator $H^{k_2}(\T^d)\hookrightarrow H^{k_1}(\T^d)$ is a compact linear operator. In particular for any radius $R>0$, the closed ball $\overline{B_{\mathcal H_{-\frac{\epsilon}{2}}}(0,\,R)}$ is compact in $\mathcal H_{-\epsilon}$.
\end{theorem}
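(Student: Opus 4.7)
The plan is to prove the compact embedding $H^{k_2}(\T^d)\hookrightarrow H^{k_1}(\T^d)$ directly from the Fourier-side characterization of the norm, namely
\[
\|f\|_{H^{k}(\T^d)}^2=\sum_{\nu\in\Z^d\setminus\{0\}}\|\nu\|^{4k}\,\bigl|\widehat f(\nu)\bigr|^2,
\]
and then to transfer the statement to the $\mathcal H_a$-scale via the Hilbert space isomorphism $(-\Delta)^{-a}\colon\mathcal H_a\to H^a(\T^d)$ mentioned in Subsection~\ref{subsec:review}.

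First I would note continuity of the inclusion: since $\nu\in\Z^d\setminus\{0\}$ gives $\|\nu\|\ge 1$ and $k_1<k_2$, one has $\|\nu\|^{4k_1}\le\|\nu\|^{4k_2}$, so $\|f\|_{H^{k_1}}\le\|f\|_{H^{k_2}}$. For compactness, let $(f_m)_{m\in\N}$ be a bounded sequence in $H^{k_2}(\T^d)$ with $\sup_m\|f_m\|_{H^{k_2}}\le M$. For every fixed $\nu$, the scalar sequence $\widehat{f_m}(\nu)$ is bounded by $M\|\nu\|^{-2k_2}$, so a Cantor diagonal extraction produces a subsequence, still denoted $(f_m)$, such that $\widehat{f_m}(\nu)\to a_\nu$ for every $\nu\in\Z^d$. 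By Fatou, $\sum_{\nu\ne 0}\|\nu\|^{4k_2}|a_\nu|^2\le M^2$, so $f:=\sum_\nu a_\nu\mathbf e_\nu$ belongs to $H^{k_2}(\T^d)$.

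The key step is the tail estimate. For any $N\ge 1$ I would split
\[
\|f_m-f\|_{H^{k_1}}^2=\sum_{0<\|\nu\|\le N}\|\nu\|^{4k_1}\bigl|\widehat{f_m}(\nu)-a_\nu\bigr|^2+\sum_{\|\nu\|>N}\|\nu\|^{4k_1}\bigl|\widehat{f_m}(\nu)-a_\nu\bigr|^2.
\]
The first sum has finitely many terms and vanishes as $m\to\infty$ by pointwise convergence. For the second sum I use $\|\nu\|^{4k_1}=\|\nu\|^{4k_2}\|\nu\|^{-4(k_2-k_1)}\le N^{-4(k_2-k_1)}\|\nu\|^{4k_2}$ to bound it by $N^{-4(k_2-k_1)}(2M)^2$ uniformly in $m$. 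Sending $m\to\infty$ and then $N\to\infty$ gives $f_m\to f$ in $H^{k_1}(\T^d)$, proving compactness of the embedding.

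For the concrete consequence, set $k_1:=-\epsilon$ and $k_2:=-\epsilon/2$ so that $k_1<k_2$. Since $(-\Delta)^{-a}$ is a linear isometric isomorphism between $\mathcal H_a$ and $H^a(\T^d)$ for $a=-\epsilon$ and $a=-\epsilon/2$, and it intertwines the two inclusions, the compactness transfers verbatim: the closed ball $\overline{B_{\mathcal H_{-\epsilon/2}}(0,R)}$ is the image of $\overline{B_{H^{-\epsilon/2}}(0,R)}$ under an isomorphism, hence its image in $\mathcal H_{-\epsilon}$ (via the inclusion) is precompact, i.e.\ compact upon taking closure. No step is particularly subtle; the only care required is keeping straight the $\|\nu\|^{4k}$ convention adopted in Subsection~\ref{subsec:review} versus the more customary $\|\nu\|^{2k}$, but this is a harmless reparametrization and does not affect the diagonal-plus-tail argument above.
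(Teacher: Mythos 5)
Your proof is correct, and it proves the same frequency-splitting fact the paper uses, but via a different formal route. The paper's own argument (adapted from Roe) is a total-boundedness sketch: quotient out the low-frequency modes $\{\widehat f(\nu)=0$ for $\|\nu\|>N\}$, cover the resulting finite-dimensional ball by finitely many $\omega$-balls, and absorb the high-frequency tail, which is uniformly of size $<\omega$ in the $H^{k_1}$-norm by the same inequality $\|\nu\|^{4k_1}\le N^{-4(k_2-k_1)}\|\nu\|^{4k_2}$ that you use. You instead argue by sequential compactness: Cantor diagonal extraction for the Fourier coefficients, Fatou to place the limit in $H^{k_2}$, and the finite-sum-plus-tail estimate to upgrade pointwise convergence of coefficients to $H^{k_1}$-convergence. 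The two are equivalent, but your version is fully written out where the paper only sketches, and your Fatou step buys something the paper leaves implicit: it shows the limit stays in the ball of radius $M$, which is exactly what makes the \emph{closed} ball $\overline{B_{\mathcal H_{-\epsilon/2}}(0,R)}$ genuinely compact (not merely precompact) in $\mathcal H_{-\epsilon}$ --- so your closing remark that compactness holds only ``upon taking closure'' slightly undersells what you have already proved. The transfer to the $\mathcal H_a$-scale via the isometry $(-\Delta)^{-a}$ is handled identically in both arguments.
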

\begin{proof}[Sketch of the proof] The proof is readily adapted from the one in \citet[Theorem~5.8]{Roe}. Let $\omega>0$ be arbitrarily small. Let $B$ be the unit ball of $H^{k_2}(\T^d)$. We quotient then the space $H^{k_2}(\T^d)$ by the subspace $Z:=\left\{f:\,\widehat f(\nu)=0\text{ for }\|\nu\|>N\right\}$ with $N=N(\omega)$ large enough so that $\|f\|_{{k_1}}<\omega$ for $f\in B\cap Z$. The unitary ball in $H^{k_2}/Z$ is then compact and thus can be covered by finitely many $\omega$-balls, giving a finite $2\omega$-covering of balls for $B$ in the $H^{k_1}$-norm as well. This shows the inclusion operator is compact.

We take $k_1:=-\eps$ and $k_2:=-\frac{\eps}{2}$. By the definitions in Subsection~\ref{subsec:review}, there is a Hilbert space isomorphism between $H^{a}(\T^d)$ and $\mathcal H_{a}(\T^d)$. Applying the above observation, we get the result.
\end{proof}
\begin{proof}[Proof of tightness]
Choose $-\epsilon<-\frac{d}{2}$. Observe that
\[
\|\Xi_n\|_{L^2(\T^d)}^2=16\pi^4  n^{d-4}\sum_{x,\,y\in \T_n^d}\left(\chi_{nx}-\min_{w\in \Z_n^d}\chi_w\right)\left(\chi_{ny}-\min_{w\in \Z_n^d}\chi_w\right)
\]
is a.~s. finite, for fixed $n$, being a finite combination of Gaussian variables and their minimum. Hence $\Xi_n\in L^2(\T^d)\subset \mathcal H_{-\epsilon}(\T^d)$ a.~s.
By Rellich's theorem it will suffice to find, for all $\delta>0$, a $R=R(\delta)>0$ such that
\[
 \sup_{n\in \N}\prob\left(\|\Xi_n\|_{\mathcal H_{-\frac{\epsilon}{2}}}\ge R\right)\le \delta.
 \]
A consequence of Markov's inequality is that such an $R(\delta)$ can be found as long as we show that for some $C>0$
\[
 \sup_{n\in \N}\E\left[\|\Xi_n\|_{\mathcal H_{-\frac{\epsilon}{2}}}^2\right]\le C.
\]
Since $\Xi_n\in L^2$, it admits a Fourier series representation $\Xi_n(\vartheta)=\sum_{\nu\in\Z^d}\widehat{\Xi_n}(\nu)\mathbf e_\nu(\vartheta)$
with $\widehat{\Xi_n}(\nu)=(\Xi_n,\,\mathbf e_\nu)_{L^2(\T^d)}$.
Thus we can express
\begin{align*}
\|\Xi_n\|_{\mathcal H_{-\frac{\epsilon}{2}}}^2&=\sum_{\nu\in \Z^d\setminus\{0\}}\|\nu\|^{-2\epsilon}\left|\widehat{\Xi_n}(\nu)\right|^2.
\end{align*}
Observe that
\[
\widehat{\Xi_n}(\nu)=\int_{\T^d}\Xi_n(\vartheta)\mathbf e_{\nu}(\vartheta)\De\vartheta=4\pi^2\sum_{x\in \T^d_n}n^{\frac{d-4}{2}}\chi_{nx}\int_{B(x,\,\frac{1}{2n})}\mathbf e_{\nu}(\vartheta)\De\vartheta.
\]
This gives
\begin{align}
\E\left[\|\Xi_n\|_{\mathcal H_{-\frac{\epsilon}{2}}}^2\right]&=16\pi^4\sum_{\nu\in \Z^d\setminus\{0\}}\sum_{x,\,y\in \T^d_n}\|\nu\|^{-2\epsilon}n^{d-4}\E\left[\chi_{nx}\chi_{ny}\right]\int_{B(x,\,\frac{1}{2n})}\mathbf e_{\nu}(\vartheta)\De\vartheta\int_{B(y,\,\frac{1}{2n})}\overline{\mathbf e_{\nu}(\vartheta)}\De\vartheta\nonumber\\
&=16\pi^4\sum_{\nu\in \Z^d\setminus\{0\}}\sum_{x,\,y\in \T^d_n}\|\nu\|^{-2\epsilon}n^{d-4}H(nx,\,ny)\int_{B(x,\,\frac{1}{2n})}\mathbf e_{\nu}(\vartheta)\De\vartheta\int_{B(y,\,\frac{1}{2n})}\overline{\mathbf e_{\nu}(\vartheta)}\De\vartheta.\label{eq:wehave}
\end{align}
Let us denote by $F_{n,\,\nu}:\T^d_n\to \R$ the function $F_{n,\,\nu}(x):=\int_{B(x,\,\frac{1}{2n})}\mathbf e_{\nu}(\vartheta)\De\vartheta$. Since $\mathbf e_\nu\in L^2(\T^d)$, the Cauchy-Schwarz inequality implies that $F_{n,\nu}\in L^1(\T^d)$. 

Assume we can prove
\begin{claim}\label{claim:song}
There exists $C'>0$ such that 
\begin{equation}\label{eq:first-aid}
\sup_{\nu\in \Z^d}\sup_{n\in \N}\sum_{x,\, y\in \T^d_n}n^{d-4}H(nx,\,ny)F_{n,\,\nu}(x)\overline{F_{n,\,\nu}(y)}\le C'.
\end{equation}
\end{claim}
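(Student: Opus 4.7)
The strategy is to exploit the product structure of the characters $\mathbf e_\nu$ to compute $F_{n,\nu}(x)$ in closed form, then to use the Fourier representation of $H$ from~\eqref{eq:defH} together with character orthogonality on $\Z_n^d$ to reduce the double sum over $x,y$ to a \emph{single} term. The final estimate then follows from the lower bound~\eqref{eq:mongoose} on the eigenvalues of the discrete Laplacian.

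First I would observe that $B(x,1/(2n))$ is an $\ell^\infty$-cube of side $1/n$, so the integral defining $F_{n,\nu}$ factorises across coordinates: a direct computation gives
\[
F_{n,\nu}(x) \;=\; \alpha_\nu(n)\, \e^{2\pi\i \nu\cdot x},
\qquad
\alpha_\nu(n) \;:=\; \prod_{i=1}^d \frac{\sin(\pi\nu_i/n)}{\pi\nu_i},
\]
with the usual convention $\sin(0)/0=1/n$. The key feature is the pointwise bound $|\alpha_\nu(n)|\le n^{-d}$, valid for all $\nu\in\Z^d$, which follows from $|\sin(\pi\nu_i/n)|\le \pi|\nu_i|/n$.

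Next I would substitute the Fourier expansion~\eqref{eq:defH} of $H(nx,ny)$ into the left-hand side of~\eqref{eq:first-aid} and separate the sums in $x$ and $y$. After this rearrangement the expression takes the form
\[
\frac{n^{-4}}{16}\sum_{w\in\Z_n^d\setminus\{0\}}\frac{|\alpha_\nu(n)|^2}{\left(\sum_{i=1}^d\sin^2(\pi w_i/n)\right)^2}\,\Bigl|\sum_{x\in\T_n^d}\e^{2\pi\i x\cdot(\nu-w)}\Bigr|^{2}.
\]
By character orthogonality on $\T_n^d$, the inner sum equals $n^d$ when $\nu\equiv w\pmod{n}$ coordinatewise and vanishes otherwise; thus only the unique representative $w^{*}\in\Z_n^d$ of $\nu\bmod n$ contributes (and the whole expression is $0$ if $w^{*}=0$).

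The last step is the uniform bound. Combining $|\alpha_\nu(n)|^{2}\le n^{-2d}$ with \eqref{eq:mongoose}, which gives $(\sum_i\sin^2(\pi w^{*}_i/n))^{2}\ge 16\|w^{*}\|^{4}/n^{4}$, reduces the surviving term to
\[
\frac{n^{-4}\cdot n^{2d}\cdot n^{-2d}}{16}\cdot\frac{n^{4}}{16\|w^{*}\|^{4}}\;=\;\frac{1}{256\,\|w^{*}\|^{4}}\;\le\;\frac{1}{256},
\]
using $\|w^{*}\|\ge 1$ since $w^{*}\in\Z^d\setminus\{0\}$. Taking $C'=1/256$ finishes the proof, uniformly in $n$ and $\nu$. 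I do not foresee any real obstacle: once the explicit form of $F_{n,\nu}$ is recognised, everything collapses cleanly by orthogonality, and the only analytic input needed is the already-proved eigenvalue lower bound~\eqref{eq:mongoose}.
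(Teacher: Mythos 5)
Your proof is correct, and it takes a genuinely different and substantially cleaner route than the paper's. The key observation you make --- that $B(x,\frac{1}{2n})$ is a product of intervals, so $F_{n,\nu}(x)=\alpha_\nu(n)\e^{2\pi\i\nu\cdot x}$ is itself a multiple of a character on the lattice $\T_n^d$ --- is exactly right, and character orthogonality then collapses the double sum over $x,y$ to the single frequency $w^*\equiv\nu\pmod n$ in $\Z_n^d$ (with the whole expression vanishing when $w^*=0$). Combined with $|\alpha_\nu(n)|\le n^{-d}$ and the eigenvalue lower bound \eqref{eq:mongoose}, this yields the explicit bound $1/(256\|w^*\|^4)\le 1/256$, uniformly in $n$ and $\nu$. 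The paper instead never uses the oscillatory structure of $F_{n,\nu}$: it only uses $|F_{n,\nu}(x)|\le n^{-d}$ and $\|\mathbf e_\nu\|_{L^1(\T^d)}\le 1$, and runs a mollifier argument --- splitting the kernel $\|w\|^{-4}$ into $\widehat{\phi_\kappa}(w)\|w\|^{-4}$ plus $(1-\widehat{\phi_\kappa}(w))\|w\|^{-4}$, controlling the first piece by the rapid decay of $\widehat{\phi_\kappa}$ and the second by Parseval together with $|1-\widehat{\phi_\kappa}(w)|\le C\kappa\|w\|$. Your computation is shorter, gives an explicit constant, and in fact proves the stronger statement that the left-hand side of \eqref{eq:first-aid} is $O(\|\nu\bmod n\|^{-4})$; the paper's argument is more robust in that it would survive replacing $\mathbf e_\nu$ by test functions that are not exact characters, but for this claim that generality is not needed. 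The only points worth making explicit in a final write-up are the identification of $\T_n^d$ with $n^{-1}\Z_n^d$ (so that the orthogonality relation $\sum_{j\in\Z_n^d}\e^{2\pi\i j\cdot a/n}=n^d\one_{\{a\equiv 0\bmod n\}}$ applies) and the one-line verification that $w^*$ is the unique representative of $\nu$ modulo $n$ in $\Z_n^d$.
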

Using the above Claim and $-\eps<-\frac{d}{2},$ from \eqref{eq:wehave} we get
\begin{align*}
\E\left[\|\Xi_n\|_{\mathcal H_{-\frac{\epsilon}{2}}}^2\right]=
&16\pi^4\sum_{\nu\in \Z^d\setminus\{0\}}\|\nu\|^{-2\epsilon}\sum_{x, \, y\in \T^d_n}n^{d-4}H(nx,\,ny)F_{n,\,\nu}(x)\overline{F_{n,\,\nu}(y)}\\
&\le C'\sum_{k\ge 1}k^{d-1-2\epsilon}\le C.
\end{align*}
This concludes the proof, assuming Claim~\ref{claim:song}. 
\end{proof}
We are then left to show the claim we have made:
\begin{proof}[Proof of Claim~\ref{claim:song}]
First we use the bound \eqref{eq:mongoose} and the fact that $$\sum_{x,\,y\in \T_n^d} \exp(2\pi \i (x-y)\cdot w) F_{n,\,\nu}(x) \overline{F_{n,\,\nu}(y)}=\left|\widehat{F_{n,\,\nu}}(w)\right|^2 n^{2d}\ge 0$$ to obtain
\begin{align}
&\sum_{x,\,y\in \T^d_n}n^{d-4}H(nx,\,ny)F_{n,\,\nu}(x)\overline{F_{n,\,\nu}(y)}\nonumber\\
&=\sum_{x,\,y\in \T^d_n}\frac{n^{d-4}n^{-d}}{16}\sum_{w\in \Z_n^d\setminus \{0\}}\frac{\exp(2\pi \i (x-y)\cdot w)}{\left(\sum_{i=1}^d\sin^2\left(\pi\frac{w_i}{n}\right)\right)^2}F_{n,\,\nu}(x)\overline{F_{n,\,\nu}(y)}\nonumber\\
&\overset{\eqref{eq:mongoose}}\le C \sum_{x,\,y\in \T^d_n}\sum_{w\in \Z_n^d\setminus \{0\}}\frac{\exp(2\pi \i (x-y)\cdot w)}{\|w\|^4}F_{n,\,\nu}(x)\overline{F_{n,\,\nu}(y)}\label{eq:back}
\end{align}
Choose a mollifier $\phi_\kappa$ as in the previous considerations (see below \eqref{eq:zeta}). We rewrite the expression in the right-hand side of~\eqref{eq:back} accordingly as 
\begin{align}
&C\sum_{x,\,y\in \T^d_n}\sum_{w\in \Z_n^d\setminus \{0\}}\widehat{\phi_\kappa}(w)\frac{\exp(2\pi \i (x-y)\cdot w)}{\|w\|^4}F_{n,\,\nu}(x)\overline{F_{n,\,\nu}(y)}\nonumber\\
&+C\sum_{x,\,y\in \T^d_n}\sum_{w\in \Z_n^d\setminus \{0\}}\left(1-\widehat{\phi_\kappa}(w)\right)\frac{\exp(2\pi \i (x-y)\cdot w)}{\|w\|^4}F_{n,\,\nu}(x)\overline{F_{n,\,\nu}(y)}.\label{eq:moghli}
\end{align}
First we get a bound for the second term. Denote as $G_{n,\,\nu}:\,\Z_n^d\to \R$ the rescaled function $G_{n,\,\nu}(z):= F_{n,\,\nu}(\frac{z}{n})$. Now we have
\begin{align*}
&C\sum_{x,\,y\in \T^d_n}\sum_{w\in \Z_n^d\setminus \{0\}}\left(1-\widehat{\phi_\kappa}(w)\right)\frac{\exp(2\pi \i (x-y)\cdot w)}{\|w\|^4}F_{n,\,\nu}(x)\overline{F_{n,\,\nu}(y)}\\
&=C\sum_{w\in \Z_n^d\setminus \{0\}}\frac{1-\widehat{\phi_\kappa}(w)}{\|w\|^4}\sum_{x,\,y\in \Z^d_n}F_{n,\,\nu}(\frac{x}{n})\overline{F_{n,\,\nu}(\frac{y}{n})}\exp\left(2\pi \i (x-y)\cdot \frac{w}{n}\right)\\
&=C n^{2d}\sum_{w\in \Z_n^d\setminus \{0\}}\frac{1-\widehat{\phi_\kappa}(w)}{\|w\|^4} \widehat{G_{n,\,\nu}}(w) \overline{\widehat{G_{n,\,\nu}}(w)}\overset{\eqref{eq:banjaara_two}}\le C \kappa n^{2d} \sum_{w\in \Z_n^d}\left|\widehat{G_{n,\,\nu}}(w)\right|^2
\end{align*}
where in the last inequality we have used that $\|w\|\ge 1$ and $\left|\widehat{G_{n,\,\nu}}(0)\right|^2\ge 0$.
The description of $G_{n,\,\nu}$, the fact that $|F_{n,\,\nu}(w)|\le n^{-d}$ and Parseval give 
\begin{align}
 \sum_{w\in \Z_n^d}\left|\widehat{G_{n,\,\nu}}(w)\right|^2&= n^{-d} \sum_{w\in \Z_n^d} G_{n,\,\nu}(w)\overline{G_{n,\,\nu}(w)}=n^{-d}\sum_{w\in \T_n^d} F_{n,\,\nu}(w) \overline{F_{n,\,\nu}(w)}\nonumber\\
 &\le n^{-2d}\sum_{w\in \T_n^d}\int_{B(w,\,\frac1{2n})} |\mathbf e_\nu(\vartheta)|\De \vartheta=n^{-2d}\int_{\T^d} |\mathbf e_\nu(\vartheta)|\De \vartheta \nonumber\\
 &\le n^{-2d}\|\mathbf e_\nu\|_{L^1(\T^d)}\le Cn^{-2d}.\label{eq:firstbreadth}
\end{align}
By means of \eqref{eq:firstbreadth} we get that 
\eq{}\label{eq:secondbreadth}C\sum_{x,\,y\in \T^d_n}\sum_{w\in \Z_n^d\setminus \{0\}}\left(1-\widehat{\phi_\kappa}(w)\right)\frac{\exp(2\pi \i (x-y)\cdot w)}{\|w\|^4}F_{n,\,\nu}(x)\overline{F_{n,\,\nu}(y)}\le C\kappa.\eeq{}
We are back to bounding the first term in~\eqref{eq:moghli}. 
\begin{align*}
&C\sum_{x,\,y\in \T^d_n}\sum_{w\in \Z_n^d\setminus \{0\}}\widehat{\phi_\kappa}(w)\frac{\exp(2\pi \i (x-y)\cdot w)}{\|w\|^4}F_{n,\,\nu}(x)\overline{F_{n,\,\nu}(y)}\\
&=C\sum_{x,\,y\in \T^d_n}\sum_{w\in \Z^d\setminus\{0\}}\widehat{\phi_\kappa}(w)\frac{\exp(2\pi \i (x-y)\cdot w)}{\|w\|^4}F_{n,\,\nu}(x)\overline{F_{n,\,\nu}(y)}\\
&\qquad-C\sum_{x,\,y\in \T^d_n}\sum_{w\in \Z^d:\,\|w\|_\infty>n}\widehat{\phi_\kappa}(w)\frac{\exp(2\pi \i (x-y)\cdot w)}{\|w\|^4}F_{n,\,\nu}(x)\overline{F_{n,\,\nu}(y)}.
\end{align*}
Using \eqref{eq:decay_phi} we obtain a bound on the second term as
\begin{align}
 \sum_{x,\,y\in \T^d_n}&\sum_{w\in \Z^d:\,\|w\|_\infty>n}\widehat{\phi_\kappa}(w)\frac{\exp(2\pi \i (x-y)\cdot w)}{\|w\|^4}F_{n,\,\nu}(x)\overline{F_{n,\,\nu}(y)}\nonumber\\
&\le C\sum_{x,\,y\in \T^d_n}\sum_{w\in \Z^d:\,\|w\|_\infty>n}n^{-4}\left|\widehat{\phi_\kappa}(w)\right|\left|F_{n,\,\nu}(x)\overline{F_{n,\,\nu}(y)}\right|\nonumber\\
&\le C\sum_{w\in \Z^d:\,\|w\|_\infty>n} \left|\widehat{\phi_\kappa}(w)\right| \left(\sum_{x\in \T_n^d} |F_{n,\,\nu}(x)|\right)^2\le C\sum_{w\in \Z^d:\,\|w\|_\infty>n} \frac{\|\mathbf e_\nu\|_{L^1(\T^d)}^2}{(1+\|w\|)^{\delta}}\le C.\label{eq:amar}
\end{align}
Finally \eqref{eq:decay_phi} tells us that
\begin{align}
&\sum_{x,\,y\in \T^d_n}\sum_{w\in \Z^d\setminus\{0\}}\widehat{\phi_\kappa}(w)\frac{\exp(2\pi \i (x-y)\cdot w)}{\|w\|^4}F_{n,\,\nu}(x)\overline{F_{n,\,\nu}(y)}\nonumber\\
&\le C\sum_{x,\,y\in \T_n^d} \sum_{w\in \Z^d}\frac{1}{(1+\|w\|)^{\delta}}\left|F_{n,\,\nu}(x)\overline{F_{n,\,\nu}(y)}\right|\le C\sum_{w\in \Z^d} \frac1{\left(1+\|w\|\right)^{\delta}} \|\mathbf e_\nu\|_{L^1(\T^d)}^2\le C, \label{eq:bhaier}
\end{align}
where $C$ possibly depends on $\kappa$ and $\delta$.
Plugging in \eqref{eq:first-aid} the expressions \eqref{eq:secondbreadth}, \eqref{eq:amar} and \eqref{eq:bhaier} we can draw the required conclusion.
\end{proof}
This gives a proof of (P1) on page \pageref{one_spade} and completes the proof of Theorem~\ref{thm:main}.
\section{Proof of Theorem~\ref{thm:3}}\label{sec:thm3}
\paragraph{\it Strategy of the proof.}We will argue as in Theorem~\ref{thm:main} and need thus to show both (P1) and (P2). While (P2) will follow almost in the same way as in the Gaussian case, (P1) will require a different approach. Firstly, we will need to remove constants in defining $e_n$ so that we will end up working with a field depending only on linear combinations of $(\sigma(x))_{x\in \Z_n^d}$. Secondly, we will show in Subsection~\ref{subsec:bdd} that, for $\sigma$ bounded a.~s., the convergence to the bilaplacian field is ensured via the moment method. Lastly, we will truncate the weights $\sigma$ at a level $\cR>0$ and show that the truncated field approximates the original one.
\paragraph{\it Reduction to a bounded field}We first recall some facts from \cite{LMPU}.  Note that odometer $e_n$ satisfies
\begin{equation*}
\begin{cases}
\Delta_g e_n(x)= 1-s(x),  \\
 \min_{z\in \Z_n^d} e_n(z)=0.
 \end{cases}
\end{equation*}
Also if one defines 
\begin{equation}\label{eq:vgreen}
v_n(y)=\frac1{2d} \sum_{x\in \Z_n^d} g(x,y) (s(x)-1),
\end{equation}
then $\Delta_g(e_n-v_n)(z)=0$. Since any harmonic function on a finite connected graph is constant, it follows from the proof of Proposition 1.3 of \cite{LMPU} that the odometer has the following representation also in the case where the weights are non-Gaussian:
\begin{equation}
e_n(x)= v_n(x)-\min_{z\in \Z_n^d}v_n(z).
\end{equation}
Let us define the following functional: for any function $h_n: \Z_n^d\to \R$ set
$$\Xi_{h_n}(x):=4\pi^2\sum_{z\in \T^d_n}n^{\frac{d-4}{2}}h_n({nz})\one_{B\left(z,\,\frac{1}{2n}\right)}(x),\quad x\in \T^d.$$
Note that for $u\in C^\infty(\T^d)$ such that $\int_{\T^d} u(x)\De x=0$ it follows immediately that
$$\la \Xi_{e_n}, u\ra = \la \Xi_{v_n}, u\ra.$$
Observe that 
$$s(x)-1= \sigma(x)-\frac1{n^d} \sum_{y\in \Z_n^d} \sigma(y)$$ 
and hence we have from~\eqref{eq:vgreen}
$$v_n(y)=\frac1{2d}\sum_{x\in \Z_n^d} g(x,y)\sigma(x)-\frac1{2d n^{d}} \sum_{x\in \Z_n^d} g(x,y) \sum_{z\in \Z_n^d} \sigma(z).$$
By~\eqref{eq:RTL} it follows that $(2d)^{-1} \sum_{x\in \Z_n^d} g(x,y)= (2d)^{-1}n^{-d}\sum_{w\in \Z_n^d} \E_y[\tau_w]$ which is independent of $y$. We can then say that
$$v_n(y)= \frac1{2d}\sum_{x\in \Z_n^d} g(x,y)\sigma(x)- Cn^{-d}\sum_{z\in\Z_n^d} \sigma(z).$$
If we call 
\[w_n(y):=\left(2d\right)^{-1}\sum_{x\in \Z_n^d} g(x,y)\sigma(x),\]
by the mean-zero property of the test functions it follows that $\la \Xi_{v_n}, u\ra = \la \Xi_{w_n}, u \ra.$ Therefore we shall reduce ourselves to study the convergence of the field $\Xi_{w_n}.$ To determine its limit, we will first prove that all moments of $\Xi_{w_n}$ converge to those of $\Xi$; via characteristic functions, we will show that the limit is uniquely determined by moments.
\subsection{Scaling limit with bounded weights}\label{subsec:bdd}
The goal of this Subsection is to determine the scaling limit for bounded weights, namely to prove
\begin{theorem}[Scaling limit for bounded weights]
Assume $(\sigma(x))_{x\in \Z_n^d}$ is a collection of i.i.d. variables with $\E\left[\sigma\right]=0$ and $\E\left[\sigma^2\right]=1$. Moreover assume there exists $K<+\infty$ such that $|\sigma|\le K$ almost surely. Let $d\ge 1$ and $e_n(\cdot)$ be the corresponding odometer. Then if we define the formal field $\Xi_n$ as in \eqref{eq:deffield} for such i.i.d. weights, then it
converges in law as $n\to+\infty$ to the bilaplacian field $\Xi$ on $\T^d$. The convergence holds in the same fashion of Theorem~\ref{thm:main}.
\end{theorem}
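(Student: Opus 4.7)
I will follow the same two-step scheme as in Theorem~\ref{thm:main}: establish (P1) tightness in $\mathcal H_{-\eps}(\T^d)$ and (P2) convergence of the characteristic functional to $\Phi(u)=\exp(-\|u\|_{-1}^2/2)$. As noted just before the statement, for zero-mean test functions $\la\Xi_n,u\ra=\la\Xi_{w_n},u\ra$ with $w_n(y)=(2d)^{-1}\sum_{x\in\Z_n^d}g(x,y)\sigma(x)$, so I may work with $\Xi_{w_n}$ throughout.

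Tightness will come for free. The covariance $\E[w_n(x)w_n(y)]=(2d)^{-2}\sum_z g(z,x)g(z,y)$ agrees with that of $\eta$ in the Gaussian case, since this identity uses only $\E[\sigma]=0$ and $\E[\sigma^2]=1$. Every estimate in Subsection~\ref{subsec:tightness}, and in particular Claim~\ref{claim:song}, depends only on this second moment, so the Gaussian tightness argument transfers verbatim.

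For (P2), fix $u\in C^\infty(\T^d)$ of zero mean. The first step is to write
\[
\la\Xi_{w_n},u\ra=\sum_{x\in\Z_n^d} c_n(x)\sigma(x),\qquad c_n(x):=\frac{2\pi^2}{d}\,n^{\frac{d-4}{2}}\sum_{z\in\T^d_n} g(x,nz)\int_{B(z,\frac{1}{2n})}u(y)\De y,
\]
so that the field becomes an explicit linear combination of the i.i.d.~weights. By construction $\mathrm{Var}\la\Xi_{w_n},u\ra=\|c_n\|_{\ell^2}^2$, which is precisely the expression evaluated in Propositions~\ref{lem:bhindi} and~\ref{lem:small}; I will therefore import $\|c_n\|_{\ell^2}^2\to\|u\|_{-1}^2$ from the Gaussian analysis without reproving it. To obtain convergence of the law I will use the method of moments: expanding
\[
\E\bigl[\la\Xi_{w_n},u\ra^{k}\bigr]=\sum_{\pi\in\mathcal P_k}\prod_{B\in\pi}\E[\sigma^{|B|}]\sum_{\text{distinct }(x_B)_{B\in\pi}}\prod_{B\in\pi}c_n(x_B)^{|B|},
\]
using $\E[\sigma]=0$ to kill partitions containing a singleton, and combining $|\sigma|\le K$ with the elementary bound $\sum_x|c_n(x)|^{j}\le (\max_x|c_n(x)|)^{j-2}\|c_n\|_{\ell^2}^2$ for $j\ge 2$, I expect every partition containing a block of size $\ge 3$ to contribute $O((\max_x|c_n(x)|)^{s})$ with $s\ge 1$, hence to vanish once $\max_x|c_n(x)|\to 0$. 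The surviving perfect pairings will yield $(k-1)!!\,\|c_n\|_{\ell^2}^{k}+\o{1}$ in even degree and $\o{1}$ in odd degree, matching the moments of $\mathcal N(0,\|u\|_{-1}^2)$. Since the Gaussian law is moment-determined, this gives convergence in distribution, and Cramér--Wold then upgrades it to joint convergence (every linear combination $\sum_j\alpha_j\la\Xi_{w_n},u_j\ra$ is again of the form $\la\Xi_{w_n},\sum_j\alpha_j u_j\ra$ with zero-mean test function), thereby producing $\E[\exp(\i\la\Xi_n,u\ra)]\to\Phi(u)$.

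The main obstacle will be the uniform bound $\max_{x\in\Z_n^d}|c_n(x)|=\o{1}$, which is the only step where the dimension could cause trouble: the trivial estimate through $\sum_w|g(x,w)|$ diverges in $d\ge 4$, so cancellations must be exploited. My plan is to insert the spectral expansion $g(x,w)=L-2dn^{-d}\sum_{a\neq 0}\lambda_a^{-1}\psi_{-a}(x)\psi_a(w)$ following \eqref{eq:defL}--\eqref{eq:20}: the constant $L$ drops out against $\int_{\T^d}u=0$, and what remains can be controlled by the upper bound on $|\lambda_a|^{-1}$ from \eqref{eq:flash_gordon_bleu} together with the rapid decay of $\widehat u$, leading to a uniform estimate of the form $|c_n(x)|\le C(u)n^{-d/2}$ in all dimensions. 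Once this is in hand, both the Lindeberg-type control and the moment bookkeeping above close, and the theorem follows.
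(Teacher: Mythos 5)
Your proposal is correct and follows essentially the same route as the paper: reduction to the linear field $\Xi_{w_n}$, tightness inherited verbatim from the Gaussian covariance computation, and the method of moments in which singleton blocks vanish by centering, blocks of size at least three are negligible, and pair partitions reproduce the Gaussian moments. Your key uniform bound $\max_x|c_n(x)|\le C(u)n^{-d/2}$ is exactly what the paper's Lemma~\ref{lemma:espresso} delivers (via the estimate $\sum_z|\widehat{\mathcal T_n}(z)|\le \mathcal M n^{-d}$ combined with \eqref{eq:20} and \eqref{eq:mongoose}); just note that the decay you need is that of the discrete transform $\widehat{\mathcal T_n}$, not of $\widehat u$ directly, which is precisely the aliasing issue that lemma is written to handle.
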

Before showing this result, we must prove an auxiliary Lemma.
It gives us a uniform estimate in $n$ on the Fourier series of the mean of $u$ in a small ball.
\begin{lemma}\label{lemma:espresso}
Fix $u\in C^\infty(\T^d)$ with zero average. If we define
\begin{align*}
T_n:\T^d&\to \R \\
z&\mapsto \int_{B(z,\,\frac{1}{2n})}u(y)\De y
\end{align*}
and $\mathcal T_n:\Z_n^d\to\R$ is defined as $\mathcal T_n(z):=T_n\left(\frac{z}{n}\right),$ then for $n$ large enough we can find a constant $\mathcal M:=\mathcal M(d,\,u)<+\infty$ such that
$$n^{d}\sum_{z\in \Z_n^d} \left|\widehat{ \mathcal T_n}(z)\right|\le \mathcal M.$$ 
\end{lemma}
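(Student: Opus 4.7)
The plan is to compute $\widehat{\mathcal T_n}(w)$ explicitly in terms of the Fourier coefficients of $u$ on $\T^d$, and then use the smoothness of $u$ to conclude. First I would expand $u(y)=\sum_{\nu\in\Z^d\setminus\{0\}}\widehat u(\nu)\e^{2\pi\i\nu\cdot y}$; the zero mode vanishes because $u$ has zero average, and the series converges absolutely since $u\in C^\infty(\T^d)$. Substituting this into $\mathcal T_n(z)=T_n(z/n)=\int_{B(z/n,\,1/(2n))}u(y)\De y$ and interchanging sum and integral writes $\mathcal T_n(z)$ as a superposition of the characters $\e^{2\pi\i\nu\cdot z/n}$, weighted by $\widehat u(\nu)$ times the localisation factor $\int_{B(0,\,1/(2n))}\e^{2\pi\i\nu\cdot t}\De t$.

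The key computation is that, since $B(0,\,1/(2n))$ is an $\ell^\infty$-cube of side $1/n$, this integral factorises into a product of one-dimensional integrals, each of which evaluates to $\frac{1}{n}\,s(\nu_j/n)$ with $s(x):=\sin(\pi x)/(\pi x)$ (and $s(0)=1$). Setting $\mu_n(\nu):=\prod_{j=1}^d s(\nu_j/n)$, this gives the uniform bound $|\mu_n(\nu)|\le 1$ together with the identity
\[
\mathcal T_n(z)=n^{-d}\sum_{\nu\in\Z^d\setminus\{0\}}\widehat u(\nu)\mu_n(\nu)\e^{2\pi\i\nu\cdot z/n}.
\]

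Next I would apply the discrete Fourier transform on $\Z_n^d$ (with the paper's convention $\widehat f(w)=n^{-d}\sum_{z\in\Z_n^d}f(z)\e^{-2\pi\i z\cdot w/n}$) together with the orthogonality relation $\sum_{z\in\Z_n^d}\e^{2\pi\i(\nu-w)\cdot z/n}=n^d\one_{\{\nu\equiv w\,(\mathrm{mod}\,n)\}}$. This collapses the double sum into the standard aliasing formula
\[
\widehat{\mathcal T_n}(w)=n^{-d}\sum_{\substack{\nu\in\Z^d\setminus\{0\}\\\nu\equiv w\,(\mathrm{mod}\,n)}}\widehat u(\nu)\mu_n(\nu).
\]
Taking absolute values, summing over $w\in\Z_n^d$, and exploiting the fact that the residue classes $\{\nu\in\Z^d:\nu\equiv w\,(\mathrm{mod}\,n)\}$ as $w$ ranges over $\Z_n^d$ partition $\Z^d$, I obtain
\[
n^d\sum_{w\in\Z_n^d}\left|\widehat{\mathcal T_n}(w)\right|\le \sum_{\nu\in\Z^d\setminus\{0\}}|\widehat u(\nu)||\mu_n(\nu)|\le \sum_{\nu\in\Z^d\setminus\{0\}}|\widehat u(\nu)|,
\]
and the right-hand side is finite because $u\in C^\infty(\T^d)$ forces $|\widehat u(\nu)|$ to decay faster than any polynomial in $\|\nu\|$. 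Hence the desired constant is $\mathcal M:=\sum_{\nu\ne 0}|\widehat u(\nu)|$.

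I do not anticipate a significant obstacle: the argument is essentially bookkeeping, and the uniformity in $n$ comes automatically from $|s|\le 1$ on the sinc factor. The only point requiring care is keeping the three Fourier conventions (on $\T^d$, on $\Z^d$, and on $\Z_n^d$) aligned, and handling correctly the aliasing between Fourier coefficients on $\Z_n^d$ and those on $\Z^d$. Note that this bound holds for every $n\in\N$, so the ``for $n$ large enough'' qualification in the statement is in fact superfluous.
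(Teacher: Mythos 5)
Your argument is correct, and it takes a genuinely different route from the paper's. You compute $\widehat{\mathcal T_n}$ exactly: expanding $u$ in its (absolutely convergent) Fourier series, factorising the cube average $\int_{B(0,\frac{1}{2n})}\e^{2\pi\i\nu\cdot t}\De t=n^{-d}\prod_{j=1}^d s(\nu_j/n)$ with $|s|\le 1$, and then using orthogonality of the characters of $\Z_n^d$ to land on the aliasing identity
\[
\widehat{\mathcal T_n}(w)=n^{-d}\sum_{\nu\equiv w\,(\mathrm{mod}\,n),\ \nu\neq 0}\widehat u(\nu)\,\mu_n(\nu),
\]
after which the triangle inequality and the partition of $\Z^d\setminus\{0\}$ into residue classes give $\mathcal M=\sum_{\nu\neq0}|\widehat u(\nu)|$, finite by smoothness of $u$. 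The paper instead never computes the localisation factor: it notes that $T_n\in C^\infty(\T^d)$, extracts the crucial $n^{-d}$ from the crude volume bound $|D^\alpha T_n|\le\|D^\alpha u\|_{L^\infty(\T^d)}(2n)^{-d}$, controls $\sum_{z\in\Z_n^d\setminus\{0\}}|\widehat{T_n}(z)|$ by a Cauchy--Schwarz argument against $\sum_{|\alpha|=k_0}z^{2\alpha}$ in the style of Stein--Weiss, and handles the aliased frequencies $\|w\|_\infty>n$ by a separate decay estimate, which is where the ``$n$ large enough'' qualification enters. Your version buys a sharper, fully explicit constant, a bound valid for every $n$ (so the qualification is indeed superfluous), and a single mechanism (the sinc factor) in place of two separate estimates; the paper's version is more robust in that it only uses that $T_n$ is a small-volume average of a smooth function, without needing the ball to be a product cube. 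The one point worth flagging is that your residue-class partition tacitly treats $\Z_n^d$ as a complete set of representatives mod $n$; this matches how the paper uses $\Z_n^d$ throughout (orthogonality of the $n^d$ characters), so it is consistent with the ambient conventions rather than a gap.
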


\begin{proof}
For $z\in \Z_n^d$ we can write
\begin{align}
\widehat{\mathcal T_n}(z)& = \la \mathcal T_n, \,\psi_z\ra = \frac1{n^d} \sum_{y\in \Z_n^d} \mathcal T_n(y) \psi_{-z}(y)\nonumber\\
&= \frac1{n^d} \sum_{y\in\Z_n^d}  T_n\left(\frac{y}{n}\right) \exp\left(-2\pi\i z\cdot \frac{y}{n}\right)=\frac1{n^d} \sum_{y\in\T_n^d} T_n(y) \exp(-2\pi\i z\cdot y).\label{eq:octo}
\end{align}
Since $u\in C^\infty(\T^d)$, one can take derive under the integral sign and get that $T_n\in C^\infty(\T^d)$, so $\sum_{z\in \Z^d} \left|\widehat{T_n}(z)\right|<+\infty$. Hence by the Fourier inversion theorem we have the following inversion formula to be valid for every $y\in \T^d$:
$$T_n(y)=\sum_{w\in \Z^d} \widehat{T_n}( w) \exp\left( 2\pi \i y\cdot w\right).$$
First we split the sum above according to the norm of $w$ and plug it in \eqref{eq:octo}. Namely we get
\begin{align}
\widehat{\mathcal T_n}(z)&= \frac1{n^d} \sum_{y\in\T_n^d} T_n(y) \exp(-2\pi\i z\cdot y)= \frac1{n^d} \sum_{y\in \T_n^d} \sum_{ w\in \Z_n^d} \widehat{T_n}(w) \exp(2\pi\i w\cdot y)\exp(-2\pi\i z\cdot y)\nonumber\\
&+\frac1{n^d} \sum_{y\in \T_n^d} \sum_{ w\in \Z^d:\,\|w\|_\infty>n} \widehat{T_n}(w) \exp(2\pi\i w\cdot y)\exp(-2\pi\i z\cdot y).\label{eq:alo_ozi}
\end{align}
Let us look at the first summation: using the orthogonality of the characters of $L^2(\Z^d_n)$ we can write
\begin{align*}
\frac1{n^d} \sum_{y\in \T_n^d} \sum_{ w\in \Z_n^d}& \widehat{T_n}(w) \exp(2\pi\i w\cdot y)\exp(-2\pi\i z\cdot y)\\
&= \frac1{n^d}  \sum_{ w\in \Z_n^d} \widehat{T_n}(w) \sum_{y\in \Z_n^d}\exp\left(2\pi\i w\cdot \frac{y}{n}\right)\exp\left(-2\pi\i z\cdot \frac{y}{n}\right)\\
&= \frac1{n^d} \sum_{w\in \Z_n^d} \widehat T_n(w) n^d\one_{w=z}= \widehat{T_n}(z).
\end{align*}
Noting that
\[
\widehat{\mathcal T_n}(0) =\frac1{n^d} \sum_{y\in \T_n^d}  T_n(y)=\frac1{n^d}\sum_{y\in \T_n^d} \int_{B\left(y, \frac1{2n}\right)}u(x)\De x=\frac1{n^d}\int_{\T^d} u(x)\De x=0,
\]
this means we need to show that $\sum_{z\in \Z_n^d\setminus \{0\}} \left|\widehat{T_n}(z)\right| \le C(d)n^{-d}$. We follow the proof of \citet[Corollary 1.9, Chapter VII]{stein:weiss}.   For a multi-index $\alpha=(\alpha_1,\,\ldots,\,\alpha_d)\in \N^d$ and a point $x=(x_1,\,\ldots,\,x_d)\in \R^d$ we set 
\[
x^\alpha:=\prod_{j=1}^d x_j^{\alpha_j}
\]
and adopt the convention $0^0=1$. We choose now a smoothness parameter $k_0>d$. For any $\alpha$ with $|\alpha|:=\alpha_1+\cdots +\alpha_d\le k_0$ we can find a constant $c=c(k_0,\,d)$ such that 
$$\sum_{\alpha:\,|\alpha|=k_0}4\pi^2z^{2\alpha}\ge c\|z\|^{2k_0}.$$
Note that
\begin{align*}
&\sum_{z\in \Z_n^d\setminus \{0\}} \left|\widehat{T_n} (z)\right|\le \sum_{z\in \Z_n^d\setminus \{0\}} \left|\widehat{T_n} (z)\right|\left( \sum_{\alpha:\,|\alpha|=k_0} 4\pi^2 z^{2\alpha}\right)^{\frac{1}{2}}\|z\|^{-k_0}c^{-\frac{1}{2}}\\
&\le \left( \sum_{z\in \Z_n^d\setminus \{0\}} \left|\widehat{T_n} (z)\right|^2 \sum_{\alpha:\,|\alpha|=k_0} 4\pi^2 z^{2\alpha}\right)^{\frac{1}{2}} \left( \sum_{z\in \Z_n^d\setminus \{0\}} \|z\|^{-2k_0}\right)^{\frac{1}{2}}c^{-\frac{1}{2}}.
\end{align*}
Here we have used the Cauchy-Schwarz inequality in the last step. Now since $\sum_{z\in \Z_n^d\setminus \{0\}} \|z\|^{-2k_0}<+\infty$ we can compute a constant $C$ such that
\eq{}\sum_{z\in \Z_n^d\setminus \{0\}} \left|\widehat{T_n} (z)\right| \le C \left( \sum_{z\in \Z_n^d\setminus \{0\}} \left|\widehat{T_n} (z)\right|^2 \sum_{\alpha:\,|\alpha|=k_0} 4\pi^2 z^{2\alpha}\right)^{\frac{1}{2}}\le C\left( \sum_{\alpha:\,|\alpha|=k_0} \sum_{z\in \Z^d} \left|\widehat{T_n} (z)\right|^2 4\pi^2 z^{2\alpha}\right)^{\frac{1}{2}}.\label{eq:bound_hat_Tn}\eeq{}
Let us call $D^\alpha$ the derivative with respect to $\alpha$. Using the rule of derivation of Fourier transforms \cite[Chapter I, Theorem~1.8]{stein:weiss} and Parseval we have that
$$\sum_{z\in \Z^d} \left|\widehat{T_n} (z)\right|^2 4\pi^2 z^{2\alpha}= \int_{\T^d} \left|D^\alpha T_n(x)\right|^2\De x.$$
By the smoothness of $u$ we deduce that
\begin{equation}\label{eq:pond}
|D^\alpha T_n(x)| \le \|D^\alpha u\|_{L^\infty(\T^d)} \int_{B(0,\frac{1}{2n})} \De w= \|D^\alpha u\|_{L^\infty(\T^d)} (2n)^{-d}.
\end{equation}
Plugging this estimate in \eqref{eq:bound_hat_Tn} we get that 
$$\sum_{z\in \Z_n^d\setminus\{0\}} \left|\widehat{T_n} (z)\right|^2 \le C n^{-{d}} \left(\sum_{\alpha:\,|\alpha|=k_0}\|D^\alpha u\|_{L^\infty(\T^d)}^2 \right)^{\frac{1}{2}}.$$
This finally gives that 
$$\sum_{z\in \Z_n^d\setminus \{0\}} \left|\widehat{T_n} (z)\right| \le C(k_0,\,d,\,u) n^{-d}. $$
For the second summand of \eqref{eq:alo_ozi} observe that
\[
\int_{\T^d}{D^\alpha T_n}(w)\e^{-2\pi \i z\cdot w}\De w=(2\pi\i z)^\alpha\widehat{T_n}(z),\quad\alpha\in \N^d.
\]
The parameter $\alpha$ will be chosen later so that the second summand is of lower order than the first. By \eqref{eq:alo_ozi} and \eqref{eq:pond}
\[
\left|\widehat{T_n}(z)\right|\le \frac{2^{-d-1}\|D^\alpha u\|_{L^\infty(\T^d)}}{ \pi n^d\left|z^\alpha\right|}.
\]
We use this estimate to get 
\begin{align*}
\frac1{n^d} \sum_{y\in \T_n^d} \sum_{\|w\|_\infty> n}& \widehat{T_n}(w) \exp(2\pi\i w\cdot y)\exp(-2\pi\i z\cdot y)\le  \sum_{\|w\|_\infty> n} \left|\widehat{T_n}(w)\right| \\
&\le \frac{C(u,\,d,\,\alpha)}{  n^d}\sum_{\ell=n}^{+\infty}\frac{\ell^{d-1}}{\ell^{|\alpha|}}\le C(u,\,d,\,\alpha)n^{-|\alpha|}\left(1+\O{n^{-1}}\right).
\end{align*}
Thus choosing $\alpha $ with $|\alpha|>d$ we find a constant $\mathcal M=\mathcal M(d,\,u)$ such that
$$\sum_{z\in \Z_n^d} \left|\widehat{\mathcal T_n}(z)\right|\le \mathcal M n^{-d}$$
as we wanted to show.
\end{proof}
We can now start with the moment method, and we being with moment convergence.
\paragraph{\it Moment convergence}We now show that all moments converge to those of the required limiting distribution. This is explained in the following Proposition.
\begin{proposition}\label{prop:moments}
Assume $\E\left[\sigma\right]=0$, $\E\left[\sigma^2\right]=1$ and that there exists $K<+\infty$ such that $|\sigma|\le K$ almost surely. Then for all $m\ge 1$ and all $u\in C^\infty(\T^d)$ with zero average, the following limits hold:
\eq{}\label{eq:case_mom}
\lim_{n\to+\infty}\E\left[ \la \Xi_{w_n}, u\ra^{m}\right] =\begin{cases} (2m-1)!! \|u\|_{-1}^{m},&m\in 2\N\\
 0,&m\in 2\N+1.
\end{cases}
\eeq{}
\end{proposition}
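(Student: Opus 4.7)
The plan is to compute each moment directly as a sum over set partitions of the indices, exploiting the fact that $\la\Xi_{w_n},u\ra$ is a \emph{linear} functional of the i.i.d.\ weights, and then to show that only the pair partitions survive in the limit, producing the Wick formula for a centered Gaussian of variance $\|u\|_{-1}^2$. Inserting the definition of $w_n$ into $\la\Xi_{w_n},u\ra$ and performing the substitution $y=nz$ lets me write
$$\la\Xi_{w_n},u\ra=\sum_{x\in\Z_n^d}c_n(x)\,\sigma(x),\qquad c_n(x):=\frac{2\pi^2}{d}\,n^{\frac{d-4}{2}}\sum_{y\in\Z_n^d}g(x,y)\,\mathcal T_n(y),$$
with $\mathcal T_n$ the lattice function from Lemma~\ref{lemma:espresso}. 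Expanding $\E[\la\Xi_{w_n},u\ra^m]$ and grouping the indices $(x_1,\dots,x_m)$ according to the set partition $\pi=\{B_1,\dots,B_k\}$ of $\{1,\dots,m\}$ that records which of them coincide yields
$$\E\bigl[\la\Xi_{w_n},u\ra^m\bigr]=\sum_\pi\prod_{j=1}^k\E\bigl[\sigma^{|B_j|}\bigr]\sum_{\substack{y_1,\dots,y_k\in\Z_n^d\\ \text{distinct}}}\prod_{j=1}^k c_n(y_j)^{|B_j|}.$$
Since $\E[\sigma]=0$, every $\pi$ with a singleton block contributes $0$, so I may restrict to partitions whose blocks have size $\ge 2$; in particular, for $m$ odd at least one block must have odd size $\ge 3$.

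The technical heart is the uniform bound $\max_{x\in\Z_n^d}|c_n(x)|=O(n^{-d/2})$. To establish it I would use Parseval on $\Z_n^d$, the zero-average assumption $\widehat{\mathcal T_n}(0)=0$, and the eigenvalue identity~\eqref{eq:20} to rewrite
$$c_n(x)=-4\pi^2\,n^{\frac{d-4}{2}}\sum_{w\in\Z_n^d\setminus\{0\}}\frac{\psi_{-w}(x)\,\overline{\widehat{\mathcal T_n}(w)}}{\lambda_w}.$$
Combining $|\psi_{-w}(x)|=1$, the lower bound $|\lambda_w|\ge 16\|w\|^2/n^2$ coming from~\eqref{eq:mongoose}, and Lemma~\ref{lemma:espresso} (which gives $\sum_w|\widehat{\mathcal T_n}(w)|\le\mathcal M n^{-d}$), one obtains $|c_n(x)|\le Cn^{(d-4)/2+2-d}=Cn^{-d/2}$ uniformly in $x$. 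The same identity shows that $\sum_x c_n(x)^2$ is precisely the Gaussian second moment $\E[\la\Xi_n,u\ra^2]$ (the coefficients $c_n(x)$ are deterministic and do not depend on the law of $\sigma$), so by Proposition~\ref{lem:bhindi} we have $\sum_x c_n(x)^2\to\|u\|_{-1}^2$.

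With these two inputs the rest is routine. The a.s.\ bound $|\sigma|\le K$ gives $|\E[\sigma^{|B|}]|\le K^{|B|}$, and for any block of size $|B|\ge 3$,
$$\sum_{y\in\Z_n^d}|c_n(y)|^{|B|}\le\bigl(\max_x|c_n(x)|\bigr)^{|B|-2}\sum_{y\in\Z_n^d}c_n(y)^2=O\bigl(n^{-d(|B|-2)/2}\bigr)\longrightarrow 0.$$
Every partition containing a block of size $\ge 3$ therefore contributes $o(1)$, killing all $m$-odd moments. For $m$ even only the $(m-1)!!$ pair partitions survive; the distinct-index restriction in each of them costs at most $O(\max|c_n|^2)\cdot\bigl(\sum_y c_n(y)^2\bigr)^{m/2-1}=o(1)$, so the limit equals $(m-1)!!\,\|u\|_{-1}^m$, i.e.\ the $m$-th moment of the centered Gaussian of variance $\|u\|_{-1}^2$. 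The main obstacle is the sup-norm estimate $\max|c_n|=O(n^{-d/2})$: it fuses the cancellation $\widehat{\mathcal T_n}(0)=0$, the smoothness of $u$ encoded in Lemma~\ref{lemma:espresso}, and the precise discrete Laplacian eigenvalue bound. Once it is secured, the combinatorics and the Gaussian second-moment input of Proposition~\ref{lem:bhindi} assemble mechanically.
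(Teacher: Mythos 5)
Your proof is correct and follows essentially the same route as the paper's: the partition expansion of the $m$-th moment, the sup-norm bound $\max_x|c_n(x)|=\O{n^{-d/2}}$ obtained from Parseval, identity \eqref{eq:20}, the eigenvalue bound \eqref{eq:mongoose} and Lemma~\ref{lemma:espresso}, and the conclusion that a block of size $\ell\ge 3$ contributes $\O{n^{-d(\ell-2)/2}}$ are precisely the paper's estimates in \eqref{eq:badBeard}--\eqref{eq:bound_ell}, while the surviving pair partitions reduce to the second-moment computation handled via Propositions~\ref{lem:bhindi} and~\ref{lem:small}. The only differences are cosmetic and in your favour: you explicitly account for the distinct-index constraint in the partition expansion (which the paper's display \eqref{eq:moments} silently drops), and your limit $(m-1)!!\,\|u\|_{-1}^m$ is the Gaussian moment that the paper's proof actually produces (the $(2m-1)!!$ in the statement is a misprint).
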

\begin{proof}
We will first show that the $m=2$ case satisfies the claim.
\paragraph{\it Case \texorpdfstring{$m=2$}{aa}.}
We have the equality
\begin{align*}
\E\left[ w_n(y)w_n(y')\right]&=(2d)^{-2} \sum_{x\in \Z_n^d} g(x,y)\sum_{x'\in \Z_n^d} g(x',y') \E[\sigma(x)\sigma(x')].
\end{align*}
The independence of the weights gives
\[
\E\left[ \la \Xi_{w_n}, u\ra^2\right] =16\pi^4 \frac{n^{d-4}}{4 d^2} \sum_{x\in \Z_n^d}\left(\sum_{z\in \T_n^d}g(x,\,nz)T_n(z)\right)^2.
\]
With the same argument of the proof of Proposition~\ref{prop:decomposition} one has
\eq{}\label{eq:gio}
(2d)^{-2} \sum_{x\in \Z_n^d} g(x,y)g(x,y')=  n^d L^2+ H(y,y')
\eeq{}
so that, using that test functions have zero average,
\begin{align*}
\E\left[ \la \Xi_{w_n}, u\ra^2\right] &=16\pi^4 \frac{n^{d-4}}{4 d^2} \sum_{x\in \Z_n^d}\left(\sum_{z\in \T_n^d}g(x,\,nz)T_n(z)\right)^2\\
&=16\pi^4 {n^{d-4}}  \sum_{z,\,z'\in \T_n^d} H(nz, nz')T_n(z)T_n(z')\\
&=16\pi^4 {n^{d-4}}  \sum_{z,\,z'\in \T_n^d} H(nz, nz') \int_{B(z,\,\frac{1}{2n})} u(x)\De x\int_{B(z',\,\frac{1}{2n})} u(x')\De x'.
\end{align*}
Now we break the above sum into the following 3 sums (recall $K_n(u)$ from~\eqref{eq:K_n}):
\begin{align*}
\E\left[ \la \Xi_{w_n}, u\ra^2\right]&= 16\pi^4 n^{d-4} \sum_{z,\,z'\in \T_n^d}n^{-2d} H(nz, nz')  u(z) u(z')\\
&+16\pi^4 n^{d-4} \sum_{z,\,z'\in \T_n^d}n^{-2d} H(nz, nz')  K_n(z) K_n(z')\\
&+32\pi^4  n^{d-4} \sum_{z,\,z'\in \T_n^d}n^{-2d} H(nz, nz')  K_n(z) u(z').
\end{align*}
A combination of Proposition~\ref{lem:bhindi} and Proposition~\ref{lem:small} with the Cauchy-Schwarz inequality shows that the first term converges to $\|u\|^2_{-1}$ in the limit $n\to+\infty$ and the other two go to zero.

Having concluded the case $m=2$, we would like to see what the higher moments look like. Let us take for example $m=3$, in which case
\begin{align*}
&\E\left[ \la \Xi_{w_n}, u\ra^3\right]= \left(\frac{4\pi ^2 n^{\frac{d-4}{2}}}{2d}\right)^3 \sum_{z_1, \,z_2, \,z_3\in \T_n^d} \E\left[ w(nz_1)w(nz_2)w(nz_3)\right] T_n(z_1) T_n(z_2)T_n(z_3)\\
&=\left(\frac{2\pi ^2 n^{\frac{d-4}{2}}}{d}\right)^3 \sum_{z_1, \,z_2, \,z_3\in \T_n^d} \sum_{x_1,\,x_2,\,x_3\in \Z_n^d}\E\left[ \prod_{j=1}^3\sigma(x_j) \right]\prod_{j=1}^3g(x_j,\,nz_j)T_n(z_j)\\
& = \left(\frac{2\pi ^2 n^{\frac{d-4}{2}}}{d}\right)^3 \sum_{z_1, \,z_2, \,z_3\in \T_n^d} \sum_{x \in \Z_n^d}\E\left[ \sigma^3(x) \right]\prod_{j=1}^3g(x,\,nz_j)T_n(z_j)\\
& = \left(\frac{2\pi ^2 n^{\frac{d-4}{2}}}{d}\right)^3 \E\left[ \sigma^3 \right]\sum_{x \in \Z_n^d} \left [\sum_{z \in \T_n^d} g(x,\,nz)T_n(z)\right ]^3.
\end{align*}
More generally, let us call $\mathscr P(n)$ the set of partitions of $\{1,\,\ldots,\,n\}$ and as $\mathscr P_2(n)\subset \mathscr P(n)$ the set of pair partitions. We denote as $\Pi$ a generic block of a partition $P$ and as $|\Pi|$ its cardinality (for example, $\Pi=\{1,\,2,\,3\}$ is a block of cardinality $3$ of $P=\{\{1,\,2,\,3\},\,\{4\}\}\in \mathscr P(4)$).
Observe that
\begin{align}
&\E\left[ \la \Xi_{w_n}, u\ra^{m}\right]=\left(\frac{2\pi ^2 n^{\frac{d-4}{2}}}{d}\right)^m\sum_{z_1,\,\ldots,\,z_m\in \T_n^d}\E\left[\prod_{j=1}^m w_n(nz_j)\right]\prod_{j=1}^m T_n(z_j)\nonumber\\
&=\left(\frac{2\pi ^2 n^{\frac{d-4}{2}}}{d}\right)^m\sum_{P\in\mathscr P(m)}\prod_{\Pi\in P}\E\left[\sigma^{|\Pi|}\right] \sum_{x \in \Z_n^d }\left ( \sum_{ \substack{   z_j \in \mathbb{T}^d_n:\,j \in \Pi}} \,\prod_{j\in \Pi} g(x,\, nz_j)T_n(z_j)\right ) \nonumber\\
&=\sum_{P\in\mathscr P(m)}\prod_{\Pi\in P}\left(\frac{2\pi ^2 n^{\frac{d-4}{2}}}{d}\right)^{|\Pi|}\E\left[\sigma^{|\Pi|}\right]\sum_{x \in \Z_n^d } \left ( \sum_{z \in \mathbb{T}^d_n} g(x,\,nz)T_n(z) \right )^{|\Pi|}\label{eq:moments}.
\end{align}
For a fixed $P$, let us consider in the product over $\Pi\in P$ any term corresponding to a block $\Pi$ with $|\Pi|=1$: this will give no contribution because $\sigma $ is centered. Consider instead $\Pi\in P$ with $\ell:=|\Pi|>2.$ We see that
\begin{align*}
\left(\frac{2\pi ^2 n^{\frac{d-4}{2}}}{d}\right)^\ell&\E
\left[\sigma^\ell\right]\sum_{x\in \Z_n^d} \left ( \sum_{z \in \mathbb{T}^d_n} g(x,\,nz)T_n(z) \right )^{l} \nonumber\\
&=\left(\frac{2\pi ^2 n^{\frac{d-4}{2}}}{d}\right)^\ell\E
\left[\sigma^\ell\right]\sum_{x\in \Z_n^d}\left(\sum_{z\in \Z_n^d}g(x,\,z){\mathcal T_n}(z)\right)^\ell.
\end{align*}
Applying Parseval the above expression equals
\begin{align}
\left(\frac{2\pi^2 n^{\frac{d-4}{2}}}{d}\right)^\ell&\E\left[\sigma^\ell\right]\sum_{x\in \Z_n^d}\left(n^{d}\sum_{z\in \Z_n^d}\widehat{g_x}(z) \widehat{\mathcal T_n}(z)\right)^\ell\nonumber\\
&\stackrel{\eqref{eq:20}}{=}\left({4\pi^2 n^{\frac{d-4}{2}}}\right)^\ell\E\left[\sigma^\ell\right]\sum_{x\in \Z_n^d}\left(\sum_{z\in \Z_n^d\setminus\{0\}}\frac{\psi_{-z}(x)}{-\lambda_z} \widehat{\mathcal T_n}(z)\right)^\ell\label{eq:badBeard}.
\end{align}
Here we have used that $\widehat{\mathcal T_n}(0)=0.$ Thanks to the fact that $-\lambda_z\ge Cn^{-2}$ uniformly over $z\in\Z_n^d\setminus \{0\}$ (see \eqref{eq:mongoose}) we obtain
\begin{align}
&\left(\frac{2\pi^2 n^{\frac{d-4}{2}}}{d}\right)^\ell\E
\left[\sigma^\ell\right]\sum_{x\in \Z_n^d}\left ( \sum_{z \in \mathbb{T}^d_n} g(x,\,nz)T_n(z) \right )^{l}\le C\E\left[\sigma^\ell\right]  n^{\frac{\ell d}{2}+d} \left(\sum_{z\in \Z_n^d\setminus\{0\}} \left|\widehat{\mathcal T_n}(z)\right|\right)^\ell.\label{eq:bound_ell}
\end{align}
Since $\sigma$ is almost surely bounded, by Lemma~\ref{lemma:espresso} we can conclude that each term in \eqref{eq:moments} corresponding to a block of cardinality $\ell>2$ has order at most $n^{\frac{ \ell d}{2}-(\ell-1)d}=\o{1}$. Hence in \eqref{eq:moments} only pair partitions of $m$ will give a contribution of order unity to the sum. Since, for $m:=2m'+1$, there are no pair partitions, $\E\left[ \la \Xi_{w_n}, u\ra^{2m'+1}\right]$ will converge to zero. Otherwise, for $m:=2m'$ we can rewrite
\begin{align*}
\E\left[ \la \Xi_{w_n}, u\ra^{2m'}\right]&=\sum_{P\in\mathscr P_2(2m')}\left(\frac{4\pi^4 n^{{d-4}}}{d^2}\sum_{x\in\Z_n^d }\left(\sum_{z\in \Z_n^d}g(x,\,z)\mathcal T_n(z)\right)^2\right)^{m'}+\o{1}.
\end{align*}
Since $\left|\mathscr P_2(m)\right|=(2m-1)!!$ and the term in the bracket above converges to $\|u\|_{-1}^{2}$ we can conclude the proof of Proposition~\ref{prop:moments}.
\end{proof}

\paragraph{\it Tightness.}The proof of tightness is, not suprisingly, a re-run of that in the Gaussian case. In fact tightness depends on the covariance structure of the field we are examining; since both the Gaussian functional $\Xi_n$ and $w_n$ share the same covariance, we can recover mostly of the results already calculated. First we notice that
\[
\lVert{\Xi_{w_n}}\rVert_{L^2(\T^d)}^2=\frac{16\pi^4}{(2d)^2}  n^{d-4}\sum_{x,\,y\in \Z_n^d}g(x,\,y)\sigma(x)\sum_{x',\,y'\in \Z_n^d}g(x',\,y')\sigma(x')
\]
is finite with probability one, since $\sigma$ is bounded. One can then go along the lines of the proof of (P1) in Subsection~\ref{subsec:tightness} and get to \eqref{eq:wehave} which will become, in our new setting,
\begin{align*}
&\frac{16\pi^4}{(2d)^2}\sum_{\nu\in \Z^d\setminus\{0\}}\sum_{x,\,y\in \T^d_n}\|\nu\|^{-2\epsilon}n^{d-4}\E\left[w_n(nx)w_n(ny)\right]\int_{B(x,\,\frac{1}{2n})}\mathbf e_{\nu}(\vartheta)\De\vartheta\int_{B(y,\,\frac{1}{2n})}\overline{\mathbf e_{\nu}(\vartheta)}\De\vartheta\\
&\stackrel{\eqref{eq:gio}}{=}{16\pi^4}\sum_{\nu\in \Z^d\setminus\{0\}}\sum_{x,\,y\in \T^d_n}\|\nu\|^{-2\epsilon}n^{d-4}\left(n^d L^2+H(nx,\,ny)\right)\int_{B(x,\,\frac{1}{2n})}\mathbf e_{\nu}(\vartheta)\De\vartheta\int_{B(y,\,\frac{1}{2n})}\overline{\mathbf e_{\nu}(\vartheta)}\De\vartheta.
\end{align*}
Since $\int_{\T^d}\mathbf e_{\nu}(\vartheta)\De\vartheta=0$, the previous expression reduces to
\[
{16\pi^4}\sum_{\nu\in \Z^d\setminus\{0\}}\sum_{x,\,y\in \T^d_n}\|\nu\|^{-2\epsilon}n^{d-4}H(nx,\,ny)\int_{B(x,\,\frac{1}{2n})}\mathbf e_{\nu}(\vartheta)\De\vartheta\int_{B(y,\,\frac{1}{2n})}\overline{\mathbf e_{\nu}(\vartheta)}\De\vartheta.
\]
From this point onwards, the computations of the proof of (P1) can be repeated in a one-to-one fashion.
\subsection{Truncation method}
At the moment we are able to determine the scaling limit when the weights are bounded almost surely. To lift this condition to zero mean and finite variance only, we begin by defining a truncated field and show it will determine the scaling limit of the global field. Fix an arbitrarily large (but finite) constant $\cR>0$. Set
\begin{align*}
w_n^{<\cR}(x)&:=\frac{1}{2d}\sum_{y\in \Z_n^d}g(x,\,y)\sigma(y)\one_{\{|\sigma(y)|< \cR\}},\\
w_n^{\ge\cR}(x)&:=\frac{1}{2d}\sum_{y\in \Z_n^d}g(x,\,y)\sigma(y)\one_{\{|\sigma(y)|\ge \cR\}}.
\end{align*}
Clearly $w_n(\cdot)=w_n^{<\cR}(\cdot)+w_n^{\ge\cR}(\cdot)$. To prove our result, we will use
\begin{theorem}[{\citet[Theorem~4.2]{Bi68}}]\label{thm:billy}
Let $S$ be a metric space with metric $\rho$. Suppose that $(X_{n,\,u},\,X_n)$ are elements of $S\times S.$ If
\[
\lim_{u\to+\infty} \limsup_{n\to+\infty} \prob\left(\rho(X_{n,\,u},\,X_n)\ge \tau\right)=0
\]
for all $\tau>0$, and $X_{n,\,u}\Rightarrow_{n}Z_u\Rightarrow_u X$, where $``\Rightarrow_x''$ indicates convergence in law as $x\to +\infty$, then $X_n\Rightarrow_n X$.
\end{theorem}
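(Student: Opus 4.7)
The plan is to verify the Portmanteau criterion for convergence in law: for every bounded, uniformly continuous $g:S\to\R$, I would show $\E\left[g(X_n)\right]\to\E\left[g(X)\right]$. Fixing such a $g$, the natural decomposition is the triangle inequality
\[
\left|\E\left[g(X_n)\right]-\E\left[g(X)\right]\right|\le A_{n,\,u}+B_{n,\,u}+C_u,
\]
with $A_{n,\,u}:=\left|\E\left[g(X_n)\right]-\E\left[g(X_{n,\,u})\right]\right|$, $B_{n,\,u}:=\left|\E\left[g(X_{n,\,u})\right]-\E\left[g(Z_u)\right]\right|$, and $C_u:=\left|\E\left[g(Z_u)\right]-\E\left[g(X)\right]\right|$. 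The middle term $B_{n,\,u}$ vanishes as $n\to+\infty$ for each fixed $u$ by $X_{n,\,u}\Rightarrow_n Z_u$, and $C_u\to 0$ as $u\to+\infty$ by $Z_u\Rightarrow_u X$; both facts are immediate from the weak-convergence hypotheses.

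The heart of the argument is the estimate on $A_{n,\,u}$, which is where the closeness-in-probability assumption enters. Given $\eps>0$, uniform continuity of $g$ yields $\delta>0$ such that $\rho(x,\,y)<\delta$ implies $|g(x)-g(y)|<\eps$. Splitting the expectation over the event $\{\rho(X_n,\,X_{n,\,u})<\delta\}$ and its complement, I obtain
\[
A_{n,\,u}\le \eps + 2\|g\|_\infty \,\prob\left(\rho(X_n,\,X_{n,\,u})\ge \delta\right).
\]
Taking $\limsup_{n\to+\infty}$ in the triangle inequality (which annihilates $B_{n,\,u}$, while $C_u$ is independent of $n$) and then $\lim_{u\to+\infty}$ (which annihilates $C_u$ and, by the standing hypothesis, the residual probability term in the bound on $A_{n,\,u}$) yields
\[
\limsup_{n\to+\infty}\left|\E\left[g(X_n)\right]-\E\left[g(X)\right]\right|\le \eps.
\]
Since $\eps>0$ is arbitrary, the left-hand side equals zero, and the claim follows.

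The main obstacle is really only the careful sequencing of the two limits. One must send $n\to+\infty$ first (so that $B_{n,\,u}$ disappears for each fixed $u$) and only then $u\to+\infty$ (so that $C_u$ disappears and the probability bound can be invoked); this is exactly the order encoded in the hypothesis $\lim_{u\to+\infty}\limsup_{n\to+\infty}\prob(\rho(X_{n,\,u},\,X_n)\ge\tau)=0$, and the reverse order would in general give nothing. A minor technical remark is that the reduction to bounded uniformly continuous test functions is the standard Portmanteau theorem for metric spaces, which requires no separability of $S$ in this direction.
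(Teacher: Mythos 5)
The paper does not prove this statement at all---it is imported verbatim as Theorem~4.2 of Billingsley (1968) and used as a black box---so there is no internal proof to compare against. Your argument is correct and complete: the reduction to bounded uniformly continuous test functions is legitimate in an arbitrary metric space, the three-term triangle decomposition is the right one, the bound $A_{n,\,u}\le \eps+2\|g\|_\infty\,\prob\left(\rho(X_n,\,X_{n,\,u})\ge\delta\right)$ is exactly what the uniform continuity buys, and you correctly identify that the order of limits ($n$ first, then $u$) is forced by the form of the hypothesis. For the record, Billingsley's own proof runs instead through the closed-set form of the Portmanteau theorem, bounding $\prob(X_n\in F)$ by $\prob(\rho(X_{n,\,u},\,X_n)\ge\eps)+\prob(X_{n,\,u}\in F_\eps)$ for the $\eps$-enlargement $F_\eps$ of a closed set $F$ and then shrinking $\eps$; your test-function route is an equally standard and, if anything, slightly cleaner alternative since it avoids the final monotone-limit step $F_\eps\downarrow F$.
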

Following this Theorem, we need to show two steps:
\begin{itemize}
\item[(S1)] $\lim_{\cR\to+\infty} \limsup_{n\to+\infty} \prob\left(\left\|\Xi_{w_n}-\Xi_{w_n^{<\cR}}\right\|_{\mathcal H_{-\eps}}\ge \tau\right)=0$ for all $\tau>0$.
\item[(S2)] For a constant $v_\cR>0$, we have $\Xi_{w_n^{<\cR}}\Rightarrow_n \sqrt{v_\cR}\, \Xi\Rightarrow_{\cR}\Xi$ in the topology of $\mathcal H_{-\eps}$.
\end{itemize}
As a consequence we will obtain that $\Xi_{w_n}$ converges to $\Xi$ in law in the topology of $\mathcal H_{-\eps}.$
\subsubsection{Proof of (S1)}
We notice that
\[
\left\| \Xi_{w_n}-\Xi_{w_n^{<\cR}}\right\|_{\mathcal H_{-\eps}}=\left\| \Xi_{w_n^{\ge\cR}}\right\|_{\mathcal H_{-\eps}}
\]
by definition, for every realization of $(\sigma(x))_{x\in \Z_n^d}$. Since, for every $\tau>0$,
\[
\prob\left(\left\| \Xi_{w_n^{\ge\cR}}\right\|_{\mathcal H_{-\eps}}\ge\tau\right)\le\frac{\E\left[\left\| \Xi_{w_n^{\ge\cR}}\right\|_{\mathcal H_{-\eps}}^2\right]}{\tau^2}
\]
it will suffice to show that the numerator on the right-hand side goes to zero to show (S1). But
\begin{align}
&\E\left[\left\| \Xi_{w_n^{\ge\cR}}\right\|_{\mathcal H_{-\eps}}^2\right]\nonumber\\
&\label{eq:withRandnot}=16\pi^4\sum_{\nu\in \Z^d\setminus\{0\}}\sum_{x,\,y\in \T^d_n}\|\nu\|^{-4\eps}n^{d-4}\E\left[ w_n^{\ge\cR}(xn) w_n^{\ge\cR}(ny)\right]\int_{B(x,\,\frac{1}{2n})}\mathbf e_{\nu}(\vartheta)\De\vartheta\int_{B(y,\,\frac{1}{2n})}\overline{\mathbf e_{\nu}(\vartheta)}\De\vartheta
\end{align}
Since the $\sigma$'s are i.i.d., we see that
\begin{align}
\E & \left[w_{n}^{\ge R}(xn)w_{n}^{\ge \cR}(yn)\right]=\frac{1}{4d^{2}}\sum_{w\in\mathbb{Z}_n^{d}}g(nx,\,w)g(ny,\,w)\E\left[\sigma(w)^{2}\mathbbm{1}_{\left\{ |\sigma(w)|\ge \cR\right\} }\right]\nonumber\\
 & +\frac{1}{4d^{2}}\sum_{w\neq v\in\mathbb{Z}_n^{d}}g(nx,\,w)g(ny,\,v)\E\left[\sigma(w)\sigma(v)\mathbbm{1}_{\left\{ |\sigma(w)|\ge \cR\right\} }\mathbbm{1}_{\left\{ |\sigma(v)|\ge \cR\right\} }\right]\nonumber\\
 & =\left(\E\left[\sigma^2\mathbbm{1}_{\left\{ |\sigma|\ge \cR\right\} }\right]-\E\left[\sigma\mathbbm{1}_{\left\{ |\sigma|\ge \cR\right\} }\right]^{2}\right)\frac{1}{4d^{2}}\sum_{w\in\mathbb{Z}_n^{d}}g(nx,\,w)g(ny,\,w)\nonumber\\
 & +\E\left[\sigma\mathbbm{1}_{\left\{ |\sigma|\ge \cR\right\} }\right]^2\frac{1}{4d^{2}}\sum_{w,\,v\in\mathbb{Z}^{d}_n}g(nx,\,w)g(ny,\,v).\label{eq:ex_toplugin}
\end{align}
Pluging the last expression into~\eqref{eq:withRandnot} gives two terms. The first one is, using \eqref{eq:gio}, equal to
\begin{align*}16\pi^4&\left(\E\left[\sigma^{2}\mathbbm{1}_{\left\{ |\sigma|\ge \cR\right\} }\right]-E\left[\sigma\mathbbm{1}_{\left\{ |\sigma|\ge \cR\right\} }\right]^{2}\right)\times\\
&\times\sum_{\nu\in \Z^d\setminus\{0\}}\|\nu\|^{-4\eps}n^{d-4}\sum_{x,\,y\in \T^d_n}H(nx,\,ny)F_{n,\,\nu}(x)\overline{F_{n,\,\nu}(y)}
\end{align*}
where $F_{n,\,\nu}(x)$ was defined as $\int_{B(x,\,\frac{1}{2n})}\mathbf e_{\nu}(\vartheta)\De\vartheta$. We have at hand \eqref{eq:first-aid}, which we can use to upper-bound the previous expression by
\begin{align*}
C'16\pi^4\left(\E\left[\sigma(w)^{2}\mathbbm{1}_{\left\{ |\sigma(w)|\ge \cR\right\} }\right]-E\left[\sigma(w)\mathbbm{1}_{\left\{ |\sigma(w)|\ge \cR\right\} }\right]^{2}\right)\sum_{\nu\in \Z^d\setminus\{0\}}\|\nu\|^{-4\eps}
\end{align*}
for some $C'>0$.
The sum over $\nu$ is finite as long as $\eps>{d}/{4}$, and $$\E\left[\sigma(w)^{2}\mathbbm{1}_{\left\{ |\sigma(w)|\ge \cR\right\} }\right]-E\left[\sigma(w)\mathbbm{1}_{\left\{ |\sigma(w)|\ge \cR\right\} }\right]^{2}$$ is going to zero as $\cR\to+\infty$ (note that $\sigma$ has finite variance). We will show that the second term obtained by inserting the second summand of~\eqref{eq:ex_toplugin} in~\eqref{eq:withRandnot} is zero to complete the proof of (S1). In fact we obtain
\begin{align*}
\frac{4\pi^4}{d^2}n^{d-4}&\E\left[\sigma\mathbbm{1}_{\left\{ |\sigma|\ge \cR\right\} }\right]^{2}\sum_{\nu\in \Z^d\setminus\{0\}}\|\nu\|^{-4\eps}\times\\
&\times\sum_{x,\,y\in \T^d_n}\sum_{w,\,v\in\mathbb{Z}_n^{d}}g(nx,\,w)g(ny,\,v) F_{n,\,\nu}(x)\overline{F_{n,\,\nu}(y)}.
\end{align*}
We consider the second line in the previous expression to deduce that it equals
\begin{align*}
&\left|\sum_{x\in \T^d_n}\sum_{w\in\mathbb{Z}_n^{d}}g(nx,\,w)F_{n,\,\nu}(x)\right|^2=n^{2d}\left|\sum_{w\in\mathbb{Z}_n^{d}}\sum_{x\in \Z^d_n}\widehat{g_w}(x)\widehat{F_{n,\,\nu}}(x)\right|^2\\
&\stackrel{\eqref{eq:20}}{=}\left|-2d\sum_{w\in\mathbb{Z}_n^{d}}\sum_{x\in \Z^d_n\setminus\{0\}}\frac{\psi_{-x}(w)}{\lambda_x}\widehat{F_{n,\,\nu}}(x)+\sum_{w\in\Z_n^d}\widehat{g_w}(0)\widehat{F_{n,\,\nu}}(0)\right|^2
\end{align*}
where Parseval's theorem was used in the first equality. Both the summands above are zero: the first because
\[
\sum_{w\in \Z_n^d}\psi_{-x}(w)=n^d\langle \psi_0,\,\psi_{-x}\rangle=0,\quad x\neq 0,
\]
the second because $\mathbf e_{\nu}$ has zero average and so
\[
\widehat{F_{n,\,\nu}}(0)=n^{-d}\sum_{y\in \Z_n^d}F_{n,
\,\nu}(y)=0.
\]
\subsubsection{Proof of (S2)}
Our idea is to use the computations we did for the case in which $\sigma$ is bounded a.~s. since we are imposing that $|\sigma|<\cR$. However we have to pay attention to the fact that $\sigma\one_{\{|\sigma|<\cR\}}$ is not centered anymore, but has mean $m_\cR:=\E[\sigma\one_{\{|\sigma|<\cR\}}]$, nor has variance $1$, but $v_{\cR}:=\var[\sigma\one_{\{|\sigma|<\cR\}}]$. However we can circumvent this by using our previous results. If we set
\[
\sigma^\cR(x):=\sigma(x)\one_{\{|\sigma(x)|<\cR\}}-m_\cR
\]
we can consider the field
\[
\Xi_{n,\,\cR}(x):=\frac{4\pi^2}{2d}{n^{\frac{d-4}{2}}}\sum_{z\in \T^d_n}\sum_{w\in\Z_n^d}g(w,\,nz)\sigma^\cR(w)\one_{B(z,\,\frac{1}{2n})}(x),\quad x\in \T^d.
\]
Since $(2d)^{-1}\sum_{y\in \Z_n^d}g(\cdot,\,y)$ is a constant function on $\Z_n^d$ it follows that
\[
\la \Xi_{n,\,\cR},\,u  \ra=\la \Xi_{w_n^{<\cR}},\,u\ra
\]
for all smooth functions $u$ with zero average. Hence the field $\Xi_{n,\,\cR}$ has the same law of $\Xi_{w_n^{<\cR}}$. If we multiply and divide the former by $\sqrt{v_\cR}$, we obtain
\[
\Xi_{n,\,\cR}=\sqrt{v_\cR}\frac{4\pi^2}{2d}{n^{\frac{d-4}{2}}}\sum_{z\in \T^d_n}\sum_{w\in\Z_n^d}g(w,\,nz)\frac{\sigma^\cR(w)}{\sqrt{v_\cR}}\one_{B(z,\,\frac{1}{2n})}(x),\quad x\in \T^d.
\]
Since now the weights ${\sigma^\cR(w)}(v_\cR)^{-\frac{1}{2}}$ satisfy the assumptions of Theorem~\ref{thm:3}, we know that the above field will converge to $\sqrt{v_\cR}\,\Xi$ in law. Using the covariance structure of the limiting field, the fact that the field is Gaussian, and $\lim_{\cR\to+\infty}\sqrt{v_\cR}= 1$, a straightforward computation shows that $\sqrt{v_\cR}\,\Xi$ converges in law to $\Xi$ in the topology of $\mathcal H_{-\eps}$. With Theorem~\ref{thm:billy} we can conclude.
\section{Proof of Theorem~\ref{corol:kernel}}\label{sec:corol}
\paragraph{\it Preliminaries.}We must conclude with the proof of Theorem~\ref{corol:kernel} and begin by introducing some notation. We take $\zeta$, an (arbitrary) smooth radial function on $\R^d$, such that
\eq{}\label{eq:zeta}
\begin{cases}
\zeta(x)=1 &\|x\|\ge \frac{1}{2},\\
\zeta(x)=0 & \|x\|\le \frac{1}{4}.
\end{cases}
\eeq{}
Let us call 
\[G(x):=\zeta(x)\|x\|^{-4}=\|x\|^{-4}+(\zeta(x)-1)\|x\|^{-4}\]
and let $\mathcal G_d$ be its Fourier transform (in the sense of distributions) 
\[
\mathcal G_d(x):= \widehat G(x).
\]
Since $(\zeta(\cdot)-1)\|\cdot\|^{-4}$ is a compactly supported distribution, its Fourier transform will be a smooth function which we call $h_d$. Using the results on $\widehat{\|\cdot\|^{-4}}$ contained in Example~2.4.9 of \cite{Grafakos}, we have the explicit description of $\mathcal G_d$ in \eqref{eq:gexp}. In particular $\mathcal G_d$ decays faster than the reciprocal of any polynomial function at infinity. To see this, recall that $\widehat{D^\alpha G}(x)=\left(2\pi\i x\right)^{|\alpha|}\mathcal G_d(x)$, for any multi-index $\alpha$. If the order of the derivative is large enough (precisely $|\alpha|>d-4$), then $D^\alpha G(x)\in L^1(\R^d)$; in this case, $\left(2\pi\i x\right)^{|\alpha|}\mathcal G_d(x)$ is bounded on $\R^d$ and hence $\left|\mathcal G_d(x)\right|\le C \|x\|^{-N}$ for every positive integer $N$ as $\|x\|\to+\infty$. Let us denote by $f_\kappa:=\mathcal G_d\ast \phi_\kappa$ and note that
\eq{}\label{eq:bee}\widehat{f_\kappa}(\cdot)=\widehat{\mathcal G_d}(\cdot)\widehat{\phi_\kappa}(\cdot)=\zeta(\cdot)\|\cdot\|^{-4}\widehat{\phi_\kappa}(\cdot).\eeq{}
It follows that for some $C>0$ (depending on $\kappa$), 
\eq{}\label{eq:prop_g_eps}
\left|\widehat{f_\kappa}(\cdot)\right| \le C(1+\|\cdot\|)^{-d-1}.
\eeq{}
Moreover 
\eq{}\label{eq:decay_g_only}
\left|f_\kappa(\cdot)\right|\le C\left(1+\|\cdot\|\right)^{-d-1}\eeq{}
near infinity thanks to the rapid decay of $\mathcal G_d$ at infinity; furthermore $\mathcal G_d$ is integrable near zero in $d\ge 5$ by \eqref{eq:gexp}. Hence $f_\kappa$ is $C^\infty(\R^d)$ and also in $L^1(\R^d)$.
Using $f_\kappa= \mathcal G_d\ast\phi_\kappa$ and the definition of $\zeta$ we have that
\begin{align}\eqref{eq:fish} &=n^{-2d}\sum_{z,\,z'\in\T_n^d} u(z) u(z') \sum_{w\in\Z_n^d}\widehat{\phi_\kappa}(w)\zeta(w)\frac{\exp(2\pi \i(z-z')\cdot w)}{\|w\|^4}\nonumber\\
&=n^{-2d}\sum_{z,\,z'\in\T_n^d} u(z) u(z') \sum_{w\in\Z_n^d}\widehat{f_\kappa}(w){\exp(2\pi \i(z-z')\cdot w)}.\label{eq:fish_2}\end{align}
Now we can rewrite this term as
\begin{align}
&n^{-2d}\sum_{z,\,z'\in\T_n^d} u(z) u(z') \sum_{w\in\Z^d}\widehat{f_\kappa}(w){\exp(2\pi \i(z-z')\cdot w)}\nonumber\\
&-n^{-2d}\sum_{z,\,z'\in\T_n^d} u(z) u(z') \sum_{w\in \Z^d:\,\|w\|_\infty>n}\widehat{f_\kappa}(w){\exp(2\pi \i(z-z')\cdot w)}.\label{eq:gif}
\end{align}
First we show the second term above is negligible in the following Lemma.
\begin{lemma}\label{claim:claim1}
\begin{equation*}
\lim_{n\to+\infty} n^{-2d}\sum_{z,\,z'\in\T_n^d} u(z) u(z') \sum_{w\in \Z^d:\,\|w\|_\infty>n}\widehat{f_\kappa}(w)\exp(2\pi \i(z-z')\cdot w)=0.
\end{equation*}
\end{lemma}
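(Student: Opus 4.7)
\textbf{Proof plan for Lemma \ref{claim:claim1}.} The plan is to factor the double sum over $z,z'$ as a squared modulus, recognize that this modulus is bounded uniformly in $w\in\Z^d$ and $n$, and then absorb the remaining sum by the fast decay of $\widehat{f_\kappa}$. Concretely, for each fixed $w\in\Z^d$ I would write
\[
\sum_{z,z'\in\T_n^d} u(z)u(z')\exp\!\bigl(2\pi\i(z-z')\cdot w\bigr) \;=\; \Bigl|\sum_{z\in\T_n^d} u(z)\exp(-2\pi\i z\cdot w)\Bigr|^2.
\]
Since $z\in\T_n^d$ means $z=k/n$ with $k\in\Z_n^d$, the character $w\mapsto\exp(-2\pi\i z\cdot w)$ is $n$-periodic in $w$, so the inner factor makes sense (and coincides with a discrete Fourier transform on $\Z_n^d$) for every $w\in\Z^d$, not only for $w\in\Z_n^d$. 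A trivial triangle-inequality bound gives
\[
n^{-d}\Bigl|\sum_{z\in\T_n^d}u(z)\exp(-2\pi\i z\cdot w)\Bigr| \;\le\; \|u\|_{L^\infty(\T^d)},
\]
so $n^{-2d}\bigl|\sum_{z}u(z)e^{-2\pi\i z\cdot w}\bigr|^2\le\|u\|_{L^\infty(\T^d)}^2$ uniformly in $n$ and $w$.

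Next I would invoke the decay estimate \eqref{eq:prop_g_eps}: for some $C=C(\kappa)>0$,
\[
\sum_{w\in\Z^d:\,\|w\|_\infty>n}\bigl|\widehat{f_\kappa}(w)\bigr|\;\le\; C\sum_{\|w\|_\infty>n}(1+\|w\|)^{-d-1}\;\le\; C'\sum_{k>n}k^{-2}\;=\;\O{n^{-1}},
\]
using that the number of $w\in\Z^d$ with $\|w\|_\infty=k$ is $O(k^{d-1})$. Combining the two bounds gives
\[
\Bigl|n^{-2d}\sum_{z,z'\in\T_n^d}u(z)u(z')\sum_{\|w\|_\infty>n}\widehat{f_\kappa}(w)e^{2\pi\i(z-z')\cdot w}\Bigr|\;\le\;\|u\|_{L^\infty(\T^d)}^2\cdot\O{n^{-1}},
\]
which vanishes as $n\to+\infty$, yielding the claim.

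There is essentially no obstacle here: the argument is a one-line tail estimate once one is careful that the character $\exp(2\pi\i z\cdot w)$ on $\T_n^d$ is periodic in $w$ with period $n$, so that the inner DFT-type factor is controlled by $\|u\|_\infty$ for \emph{all} $w\in\Z^d$ and not only those in a fundamental domain. No further properties of $f_\kappa$ beyond the decay \eqref{eq:prop_g_eps} enter the proof, and the constants are allowed to depend on $\kappa$, which is harmless since $\kappa$ will be sent to $0$ only later.
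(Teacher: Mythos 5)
Your proposal is correct and follows essentially the same route as the paper: factor the $z,z'$ sum into a product of normalized discrete Fourier sums bounded by $\|u\|_{L^\infty(\T^d)}$, then control the tail $\sum_{\|w\|_\infty>n}|\widehat{f_\kappa}(w)|=\O{n^{-1}}$ via the decay estimate \eqref{eq:prop_g_eps}. The only cosmetic difference is that the paper invokes the Euler--MacLaurin formula for the tail sum where you count lattice points on shells; both give the same $\O{n^{-1}}$ bound.
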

\begin{proof} Note that
\begin{align*}
&n^{-2d}\left|\sum_{z,\,z'\in\T_n^d} u(z) u(z') \sum_{w\in \Z^d:\,\|w\|_\infty>n}\widehat{f_\kappa}(w)\exp(2\pi \i(z-z')\cdot w)\right|\\
&=\left|\sum_{w\in \Z^d:\,\|w\|_\infty>n}\widehat{f_\kappa}(w) \left(n^{-d}\sum_{z\in\T_n^d} u(z)\exp(2\pi \i z\cdot w)\right)\left(n^{-d}\sum_{z'\in\T_n^d} u(z')\exp(-2\pi \i z'\cdot w)\right)\right|\\
&\le \|u\|_{L^\infty(\T^d)}^2 \sum_{w\in \Z^d:\,\|w\|_\infty>n} \left|\widehat{f_\kappa}(w)\right|\le C \|u\|_{L^\infty(\T^d)}^2 \sum_{w\in \Z^d:\,\|w\|_\infty>n}\frac1{(1+\|w\|)^{d+1}}\le C \|u\|_{L^\infty(\T^d)}^2 n^{-1}
\end{align*}
thanks to \eqref{eq:prop_g_eps} and the Euler-MacLaurin formula \cite[Theorem~1]{Apostol}. This shows Lemma~\ref{claim:claim1}.
\end{proof}
Therefore, rather than working on \eqref{eq:fish_2}, we will concentrate on the first term of \eqref{eq:gif}.
\begin{proof}[Proof of Theorem~\ref{corol:kernel}]
Following the proof of Proposition~\ref{lem:bhindi}, it is enough to prove the convergence of the first term of \eqref{eq:gif} to the right-hand side of \eqref{eq:kernel_exp}. Since $f_\kappa$ and $\widehat{f_\kappa}$ satisfy the assumptions of the Poisson summation formula \cite[Corollary~2.6, Chapter VII]{stein:weiss}, we apply it to \eqref{eq:fish_2} and obtain 
\begin{align}
&\lim_{n\to+\infty}n^{-2d}\sum_{z,\,z'\in\T_n^d} u(z) u(z') \sum_{w\in \Z^d}\widehat{f_\kappa}(w)\exp(2\pi \i(z-z')\cdot w)\nonumber\\
&=\lim_{n\to+\infty}n^{-2d}\sum_{z,\,z'\in\T_n^d} u(z) u(z') \sum_{w\in \Z^d} f_\kappa((z-z')+w)\nonumber\\
&= \lim_{n\to+\infty}\sum_{w\in \Z^d} n^{-2d}\sum_{z,\, z'\in \T_n^d}u(z) u(z')f_\kappa((z-z')+w).\label{eq:switch}
\end{align}
We would then like to exchange sum and limit and thus we shall justify the use of the dominated convergence theorem. To this purpose we need to observe that $\|z-z'\|\le \sqrt d$ so that $\left|\|z-z'+w\|-\|w\|\right|\le 2\sqrt{d}.$ Therefore
\begin{align}
& \sum_{w\in \Z^d} n^{-2d}\sum_{z,\, z'\in \T_n^d}\left| u(z) u(z')f_\kappa((z-z')+w)\right|\nonumber\\
 &\stackrel{\eqref{eq:decay_g_only}}{\le} C n^{-2d}\|u\|_{L^\infty(\T^d)}^2\sum_{w\in \Z^d:\,\|w\|_\infty>\sqrt{d}}\,\sum_{z,\, z'\in \T_n^d}\frac{1}{(1+\|z-z'+w\|)^{d+1}}\nonumber\\
 &+C n^{-2d}\|u\|_{L^\infty(\T^d)}^2\sum_{w\in \Z^d:\,\|w\|_\infty\le \sqrt{d}}\,\sum_{z,\, z'\in \T_n^d}\frac{1}{(1+\|z-z'+w\|)^{d+1}}.\label{eq:two}
\end{align}
The second term can be directly bounded by a constant independent of $n$, being a finite sum.
As for the first term in \eqref{eq:two} we have by the Euler-MacLaurin formula 
\begin{align}
Cn^{-2d}&\|u\|_{L^\infty(\T^d)}^2\sum_{w\in \Z^d:\,\|w\|_\infty>\sqrt{d}}\,\sum_{z,\, z'\in \T_n^d}\frac{1}{(1+\|z-z'+w\|)^{d+1}}\nonumber\\
&\le C n^{-2d}\|u\|_{L^\infty(\T^d)}^2\sum_{w\in \Z^d:\,\|w\|_\infty>\sqrt{d}}\,\sum_{z,\, z'\in \T_n^d}\frac{1}{(1-2\sqrt{d}+\|w\|)^{d+1}}\nonumber\\
&\le C\left(\int_{\sqrt d-1}^{+\infty}\frac{\rho^{d-1}}{(1-2\sqrt{d}+\rho)^{d+1}}\De \rho+c\right)\le c \label{eq:three}
\end{align}
where $C,\,c$ are independent of $n$ in each occurence above. These inequalities plugged into \eqref{eq:two} give the desired bound which allows us to switch summation and limit in \eqref{eq:switch}. Going on and using also the smothness of $f_\kappa$ we compute 
\begin{align*}
&\lim_{n\to+\infty}\sum_{w\in \Z^d} n^{-2d}\sum_{z,\, z'\in \T_n^d}u(z) u(z')f_\kappa((z-z')+w)\\
&=\sum_{w\in \Z^d} \iint_{\T^d\times \T^d} u(z)u(z')f_\kappa((z-z')+w)\De z\De z'.
\end{align*}
The fast decay of $\mathcal G_d$ and hence of $f_\kappa$ at infinity enables us to apply the dominated convergence again to finally arrive at
\begin{align*}
\lim_{\kappa\to 0}\sum_{w\in \Z^d}& \iint_{\T^d\times \T^d} u(z)u(z')f_\kappa((z-z')+w)\De z\De z'\\
&=\sum_{w\in \Z^d} \iint_{\T^d\times \T^d} u(z)u(z')\mathcal G_d((z-z')+w)\De z\De z'.
\end{align*}
Due to polynomial decay of $\mathcal G_d$ at infinity it is immediate to exchange sum and integrals to derive \eqref{eq:kernel_exp}.
\end{proof}
\bibliographystyle{abbrvnat}
\bibliography{literaturASP}
\end{document}